\documentclass{amsart}
\usepackage{mathrsfs}
\usepackage{amsfonts}
\usepackage{amsmath}
\usepackage{amssymb}
\usepackage{bbm}
\usepackage{amsthm}
\usepackage{enumerate}
\usepackage{color}
\usepackage{tikz}

\usetikzlibrary{arrows,shapes,chains}
\usepackage{shuffle}

\numberwithin{equation}{section}

\allowdisplaybreaks
\theoremstyle{definition}
\newtheorem{theorem}{Theorem}[section]
\newtheorem{definition}[theorem]{Definition}

\newtheorem{lemma}[theorem]{Lemma}
\newtheorem{corollary}[theorem]{Corollary}
\newtheorem{question}[theorem]{Question}
\newtheorem{proposition}[theorem]{Proposition}
\newtheorem{example}[theorem]{Example}

\newtheorem{remark}[theorem]{Remark}

\newcommand{\AG}{\mathbb{A}}

\newcommand{\Ar}{\operatorname{Ar}}
\newcommand{\Com}{{\mathcal{C}om}}
\newcommand{\extn}{\mathbf{ex}}

\newcommand{\Ext}{\operatorname{Ext}}

\newcommand{\GKdim}{\operatorname{GKdim}}
\newcommand{\GK}{\operatorname{GKdim}}

\newcommand{\II}{\mathcal{I}}
\newcommand{\Irr}{\operatorname{Irr}}
\newcommand{\Lin}{{\mathcal{L}in}}
\newcommand{\Mas}{{\mathcal{M}as}}
\newcommand{\NGAss}{{\mathbb N}gr{\mathcal{A}ss}}
\newcommand{\Ope}{{\mathcal{O}pe}}
\newcommand{\Op}{{\mathcal{O}p}}
\newcommand{\PP}{\mathcal{P}}
\newcommand{\QQ}{\mathcal{Q}}

\newcommand{\sign}{\mathbf{sg}}
\newcommand{\sgn}{\operatorname{sgn}}

\newcommand{\SG}{\mathbb{S}}

\newcommand{\triv}{\mathbf{tr}}

\newcommand{\TX}{\mathcal{T}(\mathcal X)}

\newcommand{\VV}{\mathcal{V}}

\newcommand{\WW}{\mathcal{W}}
\newcommand{\wt}{\operatorname{wt}}
\newcommand{\XX}{\mathcal{X}}

\newcommand{\NN}{\mathbb{N}}

\usepackage[pdftex,linkcolor=blue,citecolor=blue,backref=page]{hyperref}

\title{Symmetric operads of GK-dimension one}
\author[Li]{Yu Li}
\address{School of Mathematics and Statistics,
Huizhou University, Huizhou, Guangdong 516007, China}
\email{liyu820615@126.com}

\author[Qi]{Zihao Qi}
\address{Department of Mathematics,
Fudan University, Shanghai 200433, China}
\email{qizihao@foxmail.com}

\author[Xu]{Yongjun Xu${}^*$}
\address{School of Mathematical Sciences, Qufu Normal University,
Qufu 273165, China}
\email{yjxu2002@163.com}

\author[Zhang]{James J. Zhang}
\address{Department of Mathematics, Box 354350,
University of Washington, Seattle, Washington 98195, USA}
\email{zhang@math.washington.edu}

\author[Zhang]{Zerui Zhang}
\address{School of Mathematical Sciences,
South China Normal University,
Guangzhou 51063, China}
\email{zeruizhang@scnu.edu.cn}

\author[Zhao]{Xiangui Zhao}
\address{School of Mathematics and Statistics,
Huizhou University, Huizhou, Guangdong 516007, China}
\email{zhaoxg@hzu.edu.cn}

\thanks{${}^*$ Corresponding author}

\subjclass[2020]{18M60, 18M70, 16P90, 16Z10.}

\keywords{Symmetric operad, Gelfand-Kirillov dimension,
prime operad, classification, generating series.}

\begin{document}

\begin{abstract}
We prove that there is no finitely generated symmetric operad
of Gelfand-Kirillov dimension strictly between 1 and 2 that
answers an open question posted in 2020. We also classify finitely
generated prime symmetric operads of Gelfand-Kirillov dimension 1.
\end{abstract}

\maketitle

\setcounter{section}{-1} 
\section{Introduction}
\label{xxsec0}
The concept of an operad was introduced by Boardman and Vogt \cite{BV73} and May \cite{May72} in the early 1970s within the framework of homotopy theory. Since then, operads have become important objects of study across a wide spectrum of mathematical disciplines.
One of the significant developments in the theory of operads is Ginzburg and Kapranov's foundational work on Koszul duality \cite{GK94}.  Dotsenko and Khoroshkin \cite{DK10} established the theory of Gr\"obner bases for operads, yielding an effective algorithmic version of Hoffbeck's PBW criterion for Koszulness in (symmetric) quadratic operads. Subsequently, Khoroshkin and Piontkovski \cite{KP15} investigated generating functions for the dimensions of graded components $\PP(n)$   for arbitrary operads $\PP$ admitting finite Gr\"obner bases.
Dotsenko \cite{Dot19} described a forgetful functor from the category of differential graded operads to that of weight graded differential graded algebras, and constructed two notions of ``enveloping operads" for arbitrary weight graded differential graded algebras. These fundamental results  motivate us to further study algebraic properties of operads.  In the present article, we continue to study the connection between graded algebras and operads.

Operad theory has attracted considerable interest among algebraists in recent years. Wang and Zhou \cite{WZ24} examined minimal modules of Rota-Baxter algebras of arbitrary weight from an operadic perspective. Dotsenko and Umirbaev \cite{DU23} established an effective criterion for Nielsen-Schreier varieties by applying Gr\"obner basis theory for operads. Dotsenko and Tamaroff \cite{DT21} developed a categorical approach to Poincar\'{e}-Birkhoff-Witt  type theorems through the study of module actions on operads.
The Gelfand-Kirillov (GK) dimension of an operad was first defined by Khoroshkin and Piontkovski \cite{KP15}. Bao, Ye, and Zhang \cite{BYZ20} introduced truncation ideals for  unitary symmetric operads, employing them to classify operads of low Gelfand-Kirillov dimension. This line of investigation was continued by Bao, Fu, Ye, and Zhang \cite{BFYZ25}, who classified 2-unitary operads of Gelfand-Kirillov dimension 3. 
 In \cite[Theorem 1.1]{QXZZ20},  the authors proved a version of
Bergman's gap theorem for nonsymmetric operads: {\it there is no
finitely generated nonsymmetric operad with Gelfand-Kirillov
dimension strictly between 1 and 2}. 
Note that, for every real number $d>3$, there is a finitely
generated symmetric operad with Gelfand-Kirillov dimension $d$
\cite[Theorem 1.7(2)]{QXZZ20}. 
\cite[Question 1.8]{QXZZ20}
asks if the Bergman's gap theorem holds for symmetric operads.
One of the main results of this paper is to answer this question
affirmatively:

\begin{theorem} [Bergman's gap theorem]
\label{zzthm0.1}
The Gelfand-Kirillov dimension of a finitely generated symmetric
operad cannot be strictly between 1 and 2.
\end{theorem}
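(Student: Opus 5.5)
The plan is to reduce the symmetric case to a combinatorial statement about the growth of the sequence $n\mapsto \dim \PP(n)$, following the pattern of the nonsymmetric proof in \cite[Theorem 1.1]{QXZZ20}. Recall that $\GKdim \PP$ is defined through the growth of $\sum_{i\le n}\dim\PP(i)$. Hence the claim amounts to the following dichotomy. If $\dim \PP(n)$ is not bounded by a constant $C$ for all $n$, then $\dim\PP(n)\ge n-c$ for infinitely many $n$; better, $\sum_{i\le n}\dim\PP(i)$ grows at least quadratically along a sequence. This forces $\GKdim\PP\ge 2$. Conversely, bounded dimensions give $\GKdim\le 1$.

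\textbf{Step 1 (monomial reduction).} Let $\PP$ be generated by finitely many operations of arities $\le m$. Write $\PP=\mathcal{F}(X)/I$ and pass to shuffle operads via the forgetful functor $\PP\mapsto\PP^f$. Using a monomial order compatible with arity, replace $\PP^f$ by the associated monomial shuffle operad $\mathcal{F}(X)/\mathrm{LM}(I)$. This has the same dimensions $\dim\PP(n)$, so it suffices to treat monomial shuffle operads. Their bases are sets $W_n$ of ``normal'' shuffle tree monomials of arity $n$, closed under taking subtrees (divisors).

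\textbf{Step 2 (an infinite normal tree).} Suppose $|W_n|$ is unbounded. By K\"onig's lemma applied to the divisor-closed family, there is an infinite tree all of whose finite subtrees are normal. Because the generators have bounded arity, such an infinite tree contains arbitrarily long paths from the root. The analogue of Bergman's argument for words is to show one of two things. Either some path is eventually periodic with the same ``side'' pattern, giving a family like linear combs, or the number of normal trees of arity $n$ exceeds $n/C$ infinitely often. The key counting device is this: along a long path of length $\ell$, the leaves hanging off the path can be relabeled by the shuffle condition in many ways. The positions of minimal leaf labels in the subtrees hanging off the spine vary independently. This yields at least about $\ell$ distinct normal monomials of arity about $\ell$, unless the set $W$ is very degenerate.

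\textbf{Step 3 (the gap).} I will show that if $\dim\PP(n)\le C$ for all large $n$ fails, then $\dim\PP(n)\ge \epsilon n$ for all large $n$, not just infinitely many. Then $\sum_{i\le n}\dim\PP(i)\ge \epsilon n^2/2$ and $\GKdim\ge2$. To get ``all large $n$'', I use a monotonicity trick. Composing with a fixed normal operation of arity $2$ at a suitable leaf maps $W_n$ injectively into $W_{n+1}$, or some fixed power of it does. If no binary normal operation exists, use a generator of arity $k$ and interpolate over residues mod $k-1$.

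The main obstacle is Step 2. Symmetric operads carry the $\SG_n$-action, so even a single ``comb'' direction produces many shuffle monomials, coming from the different leaf labelings of the same underlying planar tree. I must show that bounded dimension forces all but boundedly many labelings to be non-normal. Equivalently, sublinear but unbounded growth is impossible. My first attempt is a pigeonhole argument on the finite automaton governing which local two-level configurations (a vertex with one child, plus the shuffle pattern of the first leaf) are normal. Infinite normal paths then correspond to cycles in this finite graph. A cycle admitting two distinct exits or two distinct labelings should give linear growth of $|W_n|$, while a unique-choice cycle gives only bounded contributions. Making this automaton finite requires the monomial ideal to be finitely generated. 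If it is not, I would first truncate: the relevant obstruction uses only relations of bounded size, and I would argue that bounded size suffices.
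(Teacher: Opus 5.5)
\textbf{The approach has a genuine gap.} It breaks where you say it suffices to treat monomial shuffle operads. Passing to $\mathcal F(X)/\mathrm{LM}(I)$ preserves the numbers $\dim\PP(n)$, but it forgets the $\SG_n$-action. Everything you do afterwards (K\"onig's lemma, the automaton on local configurations, truncating the relations) sees only the divisor-closed set $W$ of normal monomials, and for such sets the gap is false.

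Here is a counterexample. Take one binary generator $\mu$. Let $W_n$ consist of the shuffle left combs $\mu(\cdots\mu(\mu(x_1,x_{a_2}),x_{a_3})\cdots,x_{a_n})$ with $a_2<\cdots<a_{n-1}$, such that the number $d$ of these letters exceeding $a_n$ lies in $\{0,n-2\}\cup\{j^2\mid j\geq1\}$.
\begin{itemize}
\item The divisors of a left comb are the sub-combs on intervals of the spine. Each is labelled by $1$ followed by the standardization of a consecutive factor of $a_2\cdots a_n$.
\item Such a factor is increasing, or of the same shape with the same $d$, or of the same shape with its last letter minimal. So $W$ is divisor-closed.
\item Hence the remaining monomials span an ideal of the free shuffle operad. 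The quotient is generated by $\mu$ and has $\dim=|W_n|=2+\lfloor\sqrt{n-3}\rfloor$ for $n\geq3$, so it has GK-dimension $3/2$.
\end{itemize}

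This also shows concretely why your two mechanisms fail.
\begin{itemize}
\item Normality of a leaf labelling is not a bounded-window condition. What is restricted here is the global parameter $d$, which no automaton reading local configurations can see.
\item Truncating the relations changes the growth. The minimal forbidden monomials include the comb with leaves $1,2,4,5,\dots,d+3,3$ for every nonsquare $d\geq2$. Keeping only finitely many of them makes the growth linear.
\end{itemize}
Your remark that bounded dimension must force most labelings to be non-normal names the difficulty. It gives no way to extract this from the leading monomials alone.

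\textbf{What is missing is representation theory of $\SG_n$ applied to $\PP(n)$ itself}, before any passage to leading terms. This is what the paper does.
\begin{itemize}
\item By Dickson's theorem (Lemma~\ref{zzlem1.3}, Proposition~\ref{zzpro1.4}), an $\SG_n$-module of dimension $<n-2$ with $n\geq9$ has trivial $\AG_n$-action.
\item Sub-quadratic growth forces arbitrarily long windows of arities with $\dim\PP(n)<n/2$ (Lemma~\ref{zzlem-new-2.2}(1)).
\item The nonsymmetric suboperad $\QQ$ generated by $\VV=\sum_{i\leq t}\PP(i)$ has $\GK\QQ\leq1$ by the nonsymmetric gap theorem. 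Its large normal trees are a periodic single branch capped by two pieces whose arities can be pushed into such a window.
\item A rewriting argument using equivariance shows that every $3$-cycle $(k,k+1,k+2)$, hence $\AG_n$, acts trivially on $\QQ(n)$ for $n\gg0$. Then $\PP(n)=\QQ(n)+\QQ(n)\ast\sigma$, so $\GK\PP=\GK\QQ\leq1$.
\end{itemize}

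Separately, your Step~3 is false as stated. The operad $\Com\oplus G_{\SG\triv}(\Bbbk[x,y])$ with $\deg x=\deg y=2$ is finitely generated with unbounded $\dim\PP(n)$, yet it is $1$-dimensional in every even arity. Composing with the binary operation of $\Com$ kills the second summand, so your injectivity trick fails. Step~3 is also unnecessary: you only need $\sum_{i\leq n}\dim\PP(i)\geq cn^2$ along a subsequence. That is exactly the negation of sub-quadratic growth in Theorem~\ref{zzthm-new-2.1}.
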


Bergman's gap theorem was originally
proved for associative algebras, namely, there is no associative
algebra with Gelfand-Kirillov dimension strictly between 1 and 2
\cite[Theorem 2.5]{KL00}. Bergman's gap theorem also holds for several other classes of algebras, for
example, Jordan algebras \cite{MZ96}, dialgebras \cite{zhang2019no}
and brace algebras \cite{LMZZ25}.
However, Bergman's gap theorem fails for some other types of algebras
such as Lie algebras \cite{Pet97} and Jordan superalgebras \cite{PS19}.

Bergman's result is fundamentally important for understanding
associative algebras of low Gelfand-Kirillov dimension.
Theorem \ref{zzthm0.1} leads us to further study symmetric operads
of low Gelfand-Kirillov dimension. As an application of Theorem~\ref{zzthm0.1}, this paper provides three
classification results of different classes of symmetric operads
of Gelfand-Kirillov dimension 1. By~\cite[Proposition 0.5]{BYZ20}, the only 2-unitary operad
of GK-dimension 1 is the operad endowing unital commutative algebras.
In the reduced case which we assume in this article, there are
more operads of GK-dimension 1.

Throughout let $\Bbbk$ be a base field
and $\overline{\Bbbk}$ be the algebraic closure of $\Bbbk$.  All algebraic objects
are over $\Bbbk$ unless otherwise stated. Let $\dim$ denote the $\Bbbk$-vector space dimension. The definition of
Gelfand-Kirillov dimension (or GK-dimension for short) of an
operad will be recalled in Definition \ref{zzdef1.2}.  
By an operad, we always mean a reduced (i.e.,
$\PP(0)=0$), locally finite (i.e., $\dim  \PP(n)<\infty$
for all $n$), symmetric operad unless otherwise stated.
Similarly,
by a graded algebra,
we always mean a locally finite graded algebra unless otherwise stated.

Let $\Com_{\Bbbk}$ (or $\Com$) be the operad that encodes nonunital commutative algebras
over $\Bbbk$. Now we recall another operad that encodes all
{\it skew-symmetric totally associative
ternary} algebras \cite[Section 2.1]{AM09}.

\begin{example} [{\cite{AM09}}]
\label{zzex0.2}
Let $\Ope_{\Bbbk}$ be the symmetric operad  defined by
\begin{enumerate}
\item[(i)]
$\Ope_{\Bbbk}(n)=\begin{cases} 0, & {\text{$n$ is even,}}\\
\Bbbk \mu_n, & {\text{$n$ is odd.}}
\end{cases}$
\item[(ii)]
$\mu_n \ast \sigma=\sgn(\sigma)\mu_n$ for all
$\sigma\in \SG_n$, where $\sgn(\sigma)$ is the sign of
$\sigma$,
\item[(iii)]
when both $n$ and $m$  are odd, $\mu_n\circ_i \mu_m=\mu_{n+m-1}$
for every $1\leq i\leq n$.
\end{enumerate}
Then $\Ope_{\Bbbk}$ is called the {\it odd power exterior operad}. When there is no confusion, we usually denote $\Ope_{\Bbbk}$ by $\Ope$. By \cite[Section 2.1]{AM09}, algebras over $\Ope$ are called \emph{skew-symmetric
totally associative ternary algebras}. 
\end{example}

 Now we are ready to state our first result on classification of {\it prime operads}  (Definition \ref{zzdef1.5}(1)) of
GK-dimension 1.

\begin{theorem} [${\text{Classification of prime operads of
GK-dimension 1}}$]
\label{zzthm0.3}
Let $\PP$ be a finitely generated prime operad over an  algebraically closed field~$\Bbbk$. If $\PP$ is of GK-dimension 1,
then $\PP$ is a suboperad of $\Com$ or $\Ope$.
\end{theorem}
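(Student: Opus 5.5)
We plan to pass from the operad $\PP$ to a graded associative algebra and use the structure theory of graded prime algebras of GK-dimension one.

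\textbf{Step 1: reduction to an associative algebra.} Since $\GKdim \PP=1$ and $\PP$ is finitely generated, $\dim \PP(n)$ is bounded by a constant $C$ for all $n$. Primeness means that for nonzero ideals $I,J$ of $\PP$ the composite $I\circ J$ is nonzero. First we would show that $\PP(1)$ is a division algebra, hence $\PP(1)=\Bbbk$ because $\Bbbk$ is algebraically closed and $\PP(1)$ is finite dimensional. To see this, note that $\PP(1)$ acts on each $\PP(n)$. A nilpotent element of $\PP(1)$, or a nontrivial idempotent of $\PP(1)$, generates an ideal whose product with a complementary ideal vanishes; the boundedness of $\dim\PP(n)$ is what makes these ideals proper. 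Next, choose the smallest arity $k\ge 2$ with $\PP(k)\neq 0$. Composition of the generators produces operations in arities $1+t(k-1)$ that are nonzero, again by primeness. We would form the graded algebra
\[
A=\bigoplus_{t\ge 0}\PP(1+t(k-1)),
\]
with multiplication given by composition at the first input (the ``Dotsenko enveloping'' construction). By Bergman-type results, $A$ is a finitely generated graded algebra of GK-dimension $1$.

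\textbf{Step 2: dimensions.} We expect prime graded algebras of GK-dimension one over an algebraically closed field to be close to $\Bbbk[x]$. Small and Warfield show they are finite over their centers and PI. Since $\PP(1)=\Bbbk$, we would argue that $\dim\PP(n)\le 1$ for all $n$. The argument would be a contradiction: if some $\PP(n)$ had dimension at least $2$, the $\SG_n$-action together with the composition maps would produce two nonzero ideals with zero composite. Here the equivariance is used to split $\PP(n)$ into isotypic pieces, and boundedness forces these pieces to propagate in a compatible way. This is the step we expect to be the main obstacle, since primeness has to be combined with the boundedness of $\dim\PP(n)$.

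\textbf{Step 3: the $\SG_n$-action.} Once $\dim\PP(n)\le 1$, each nonzero $\PP(n)=\Bbbk\mu_n$ carries a one-dimensional representation of $\SG_n$, so it is either trivial or the sign representation. Compatibility with $\circ_i$ forces the same type in all arities. Suppose $\PP(2)\neq 0$. In the sign case we would get $\mu_2\circ_1\mu_2=\pm\mu_2\circ_2\mu_2$, and also a relation that forces $\mu_3$ to be symmetric and antisymmetric at once, hence zero. That contradicts primeness, since $\mu_2\circ\mu_2$ must be nonzero. So the action is trivial, $\mu_n\circ_i\mu_m$ is a nonzero scalar multiple of $\mu_{n+m-1}$, and we can rescale so that the scalar is $1$. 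In the trivial case, associativity forces the scalars to satisfy a cocycle condition, and we would solve it by rescaling $\mu_n\mapsto c_n\mu_n$, again using that $\Bbbk$ is algebraically closed. This gives a suboperad of $\Com$. If instead the action is by the sign, even arities must vanish: for even $n$, the sign twist appearing in $\circ_i$ for different $i$ yields $\mu_n=-\mu_n$ once it is compared through primeness. After rescaling, the nonzero $\mu_n$ then satisfy the relations of $\Ope$.

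\textbf{Step 4: conclusion.} In both cases the set of arities with $\PP(n)\neq 0$ is closed under $n+m-1$. It follows that $\PP$ is the suboperad of $\Com$, respectively $\Ope$, spanned by those $\mu_n$.
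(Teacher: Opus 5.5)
There is a genuine gap. The two claims that carry your argument are that $\dim\PP(n)$ is uniformly bounded and that $\dim\PP(n)\le 1$. Both are asserted rather than proved, and the idea that makes them true is missing.

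Boundedness is not a formal consequence of finite generation and $\GKdim\PP=1$. The definition only gives $\sum_{i\le n}\dim\PP(i)\le n^{1+o(1)}$. Moreover, in a symmetric operad $\PP(n)=\QQ(n)\ast\Bbbk\SG_n$ may a priori be far larger than $\QQ(n)$, the part coming from the nonsymmetric suboperad generated by the generators, which is what gap-theorem combinatorics controls. In the paper, boundedness (Theorem~\ref{zzthm0.5}(i)) is obtained only \emph{after} Theorem~\ref{zzthm-new-2.4}, which says a finitely generated operad of GK-dimension one is almost $\AG\triv$. Even then it is proved only for ${\text{char}}\;\Bbbk\neq 2$; in characteristic $2$ the paper leaves it open. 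So your Step~1 assumes the hardest part.

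Theorem~\ref{zzthm-new-2.4} is the real content. Sub-quadratic growth gives windows of arities with $\dim\PP(n)<n-2$. Dickson's theorem (Lemma~\ref{zzlem1.3}, Proposition~\ref{zzpro1.4}) then makes $\AG_n$ act trivially on those windows. The periodic single-branched shape of irreducible tree monomials propagates this triviality to all large arities. Your Step~2 idea of splitting $\PP(n)$ into isotypic pieces tacitly needs exactly this: only when the constituents are $\triv$ and $\sign$ do the two pieces form ideals $I,J$ with $I\circ J=0$. Had you proved boundedness, Dickson alone would give almost $\AG$-triviality, as in Theorem~\ref{zzthm-new3-5.2}, but you never invoke it.

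The other missing step is the passage to a commutative algebra. In the paper, prime plus almost $\AG\triv$ gives $\AG\triv$ and left torsionfree (Lemma~\ref{zzlem-new3-4.10}). The equivalence of \cite{LQXZ25} between $\AG\triv$ operads and PGPerm algebras then gives $\PP=G_{\AG\triv}(A)$ for a prime PGC algebra $A$. Such an $A$ is of even or odd type and is a commutative graded domain of GK-dimension one (Proposition~\ref{zzpro-new3-4.8}). Lemma~\ref{lem-quo-ring} embeds it in $F[t]$ with $F$ algebraic over $\Bbbk$, so $A\subseteq\Bbbk[t]$. This yields $\PP(1)=\Bbbk$ and $\dim\PP(n)\le 1$ at once.

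Your algebra $\bigoplus_t\PP(1+t(k-1))$ cannot replace this, for two reasons:
\begin{itemize}
\item It omits every arity not congruent to $1$ mod $k-1$, e.g.\ $\mu_4$ in the suboperad of $\Com$ generated by $\mu_3,\mu_4$.
\item Without $\AG$-triviality, neither $\circ_i$ for $i>1$ nor the $\SG_n$-action is recovered from $\circ_1$. Its ideals are therefore not operad ideals, and neither primeness nor finite generation transfers, so Small--Warfield has nothing to apply to.
\end{itemize}
Your argument that $\PP(1)$ is a division algebra likewise names no complementary ideal and never uses $\GKdim\PP=1$.

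Finally, in Step~3 no contradiction appears in arity $3$. The sign representation is a quotient of arity $3$ of the free operad on an antisymmetric binary generator. In the paper, the vanishing of even arities is the relation $x^2=-x^2$ for odd-degree $x$ in the graded-commutative algebra $A$.
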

An element $\mu$ in an operad $\PP$ is called
{\it central}, if  for every $\nu\in \PP$  the equation
\begin{equation*}
\mu\circ_i \nu =\nu\circ_j \mu
\end{equation*}
holds for all $1\leq i\leq \Ar(\mu)$ and $1\leq j\leq \Ar(\nu)$. 
Theorem \ref{zzthm0.3} shows that every element in a finitely generated prime operad of GK-dimension 1 over an  algebraically closed field~$\Bbbk$ is central.
However, if we remove the hypotheses on ``finitely generated'' and ``$\overline{\Bbbk}=\Bbbk$'',  then there are two kinds of pathological examples of operads
of GK-dimension 1:
\begin{enumerate}
\item[(a)](Example \ref{zzex6.1})
When $\dim \overline{\Bbbk}=\infty$, there is an example of
infinitely generated prime operad $\PP$ of GK-dimension 1 such that
every element in $\PP$ is central.
\item[(b)](Example \ref{zzex6.2})
Fix an arbitrary field $\Bbbk$, there is an example of
infinitely generated prime operad $\PP$ of GK-dimension 1 satisfying
\begin{enumerate}
\item[(bi)]
every nonzero element in $\PP(n)$ for $n\geq 2$ is not central,
\item[(bii)]
every finitely generated suboperad of $\PP$ is finite-dimensional and
$\PP$ is a union of finite-dimensional suboperads,
\item[(biii)]
there is an ascending chain of distinct ideals $\{I_i\}_{i\geq 0}$ of
$\PP$ such that every $\PP/I_i$ is an infinitely generated prime operad
of GK-dimension 1 satisfying (bi) and (bii).
\end{enumerate}
\end{enumerate}
Moreover, if we further remove the ``prime'' hypothesis, there is an operad
$\PP$ of GK-dimension 1 such that it is infinitely
generated and satisfies other properties (Example \ref{zzex6.3}).
In fact, very little is
known about general properties of infinitely generated prime
operads of GK-dimension 1 except for some easy examples.

This paper mainly concerns finitely generated operads. 
There are two families of
{\it saturated} (Definition \ref{zzdef-new-3.3}) prime suboperads of
$\Com$ and $\Ope$ respectively of GK-dimension 1: for $w\geq 1$,
\begin{equation}
\label{E0.3.1}\tag{E0.3.1}
\Com^{\{w\}}(n)=\begin{cases} 0 & n\neq w\lfloor\frac{n-1}{w}\rfloor+1,\\
\Bbbk \mu_n & n= w\lfloor\frac{n-1}{w}\rfloor+1,
\end{cases}
\quad {\text{and}} \quad \mu_n\ast \sigma =\mu_n, \; \forall \sigma\in \SG_n,
\end{equation}
and
\begin{equation}
\label{E0.3.2}\tag{E0.3.2}
\Ope^{\{2w\}}(n)=\begin{cases} 0 & n\neq 2w\lfloor\frac{n-1}{2w}\rfloor+1,\\
\Bbbk \mu_n &   n=2w\lfloor\frac{n-1}{2w}\rfloor+1,
\end{cases}
\quad {\text{and}} \quad \mu_n\ast \sigma =\sgn(\sigma)\mu_n, \; \forall
\sigma\in \SG_n.
\end{equation}
For $w>1$, $\Com^{\{w\}}$ (resp. $\Ope^{\{2w\}}$)
is a higher-arity version of $\Com$ (resp. $\Ope$), and clearly $\Com^{\{1\}}=\Com$ (resp.
$\Ope^{\{2\}}=\Ope$). Moreover,
$\Com^{\{w\}}$ and $\Ope^{\{2w\}}$ are Veronese powers of $\Com$
and $\Ope$ respectively in the sense of \cite{DMR20}.

 Now we can state our second result on classification of {\it semiprime} (Definition \ref{zzdef1.5}(2))
 saturated operads of GK-dimension 1.

\begin{theorem} [${\text{Classification of semiprime
 saturated operads of GK-dimension 1}}$]
\label{zzthm0.4}
Suppose $\Bbbk$ is algebraically closed. Every
finitely generated semiprime saturated operad of GK-dimension 1 is
isomorphic to a finite direct sum of operads of the form
\eqref{E0.3.1} or \eqref{E0.3.2}.
\end{theorem}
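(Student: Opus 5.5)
The plan is to reduce Theorem~\ref{zzthm0.4} to the prime case, Theorem~\ref{zzthm0.3}, through a decomposition of a semiprime operad by its minimal primes, and then to identify the saturated prime suboperads of $\Com$ and $\Ope$.

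\textbf{Step 1: finitely many minimal primes and a subdirect embedding.} Let $\PP$ be finitely generated, semiprime, saturated, with $\GKdim \PP=1$. Every finitely generated operad of GK-dimension at most $1$ has $\dim\PP(n)$ bounded by a linear function of $n$; this is the content underlying Theorem~\ref{zzthm0.1}. Using this, I would show that $\PP$ has only finitely many minimal prime ideals $I_1,\dots,I_s$, and that $\bigcap_j I_j=0$ because $\PP$ is semiprime. Hence the natural map
\[
\PP\hookrightarrow \bigoplus_{j} \PP/I_j
\]
is injective. Each quotient $\PP/I_j$ is finitely generated and prime. Its GK-dimension is at most $1$, and it cannot be $0$ unless the quotient is finite-dimensional. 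A finite-dimensional prime operad should be trivial apart from arity $1$, and reducedness together with saturation should exclude this case. So each $\PP/I_j$ has GK-dimension exactly $1$, and Theorem~\ref{zzthm0.3} makes it a suboperad of $\Com$ or $\Ope$.

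\textbf{Step 2: saturated prime suboperads.} A suboperad $\QQ$ of $\Com$ or $\Ope$ is determined by its support $S=\{n : \QQ(n)\neq 0\}$. This set is closed under $(n,m)\mapsto n+m-1$, so $S-1$ is an additive submonoid of $\NN$. Finite generation makes $S-1$ a numerical semigroup up to scaling by $w=\gcd(S-1)$. It therefore contains all sufficiently large multiples of $w$, but may have finitely many gaps. Saturation (Definition~\ref{zzdef-new-3.3}) is exactly the condition that should fill these gaps, forcing $S-1=w\NN$. This is the definition of $\Com^{\{w\}}$, respectively $\Ope^{\{2w\}}$; in the exterior case $w$ is even because only odd arities occur. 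I would also check that the quotients $\PP/I_j$ inherit saturation from $\PP$, and this may need an argument.

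\textbf{Step 3: the embedding is an isomorphism.} I need $\PP\to\bigoplus_j\PP/I_j$ to be surjective. First, the minimal primes are pairwise comaximal in large arity. Since $I_j\not\subseteq I_k$, there is an element of $I_j$ with nonzero image in the one-dimensional spaces $(\PP/I_k)(n)$. Composing such elements and using the fact that each target is one-dimensional and prime, I would produce, in every arity $n$ of the support, elements of $\PP(n)$ projecting to the standard basis vectors $\mu_n$ of each factor. Saturation should then guarantee this in all arities rather than just large ones, which gives surjectivity.

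I expect Step 3, together with the inheritance of saturation in Step 2, to be the main obstacle. Semiprimeness alone only yields a subdirect product, and saturation has to do real work in the small arities where comaximality-type arguments give no information. My first attempt would be to write the $n$-th component of $\PP$ as a subspace of $\Bbbk^{s}$. I would then use the $\SG_n$-action, which acts by $\triv$ on $\Com$-type factors and by $\sign$ on $\Ope$-type factors, to split off the two types. After that I would separate factors of the same type by composing with elements vanishing on some factors, and finally invoke saturation to pass from large $n$ to all $n$.
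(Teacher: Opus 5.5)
Your overall architecture matches the paper's: embed $\PP$ into $\bigoplus_j\PP/I_j$ over finitely many primes, identify the factors through Theorem~\ref{zzthm0.3} and the support monoid, then use saturation. The gap is that the step which actually uses the hypothesis ``saturated'' is missing. You defer two points to ``saturation should guarantee this'': that each $\PP/I_j$ is saturated (Step~2), and that $f\colon\PP\to\bigoplus_j\PP/I_j$ is onto in small arities (Step~3). The splitting-and-composing mechanism you describe only reaches large arities, as you note yourself. For the small arities you give no argument at all. (Separating $\triv$- from $\sign$-factors also fails in characteristic~$2$, where they coincide.) Step~2 also cannot come first. There is no general reason for a quotient of a saturated operad to be saturated, and here it only becomes true once the decomposition is known.

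The missing idea is that saturation is a universal property with respect to almost isomorphisms. You should apply it to $\PP$ itself against an explicit left torsionfree model.
\begin{itemize}
\item Let $S_j$ be the support of $\PP/I_j$ and $w_j=\gcd(S_j-1)$. Let $\QQ_j$ be $\Com^{\{w_j\}}$, resp.\ $\Ope^{\{w_j\}}$ (here $w_j$ is even). Your semigroup observation gives $\PP/I_j\subseteq\QQ_j$, with equality in large arity.
\item $\bigoplus_j\QQ_j$ is left torsionfree. Via the almost isomorphism $f$, which your comaximality argument provides, $(\bigoplus_j\QQ_j)_{\{N\}}\cong\PP_{\{N\}}$ for $N\gg0$.
\item Definition~\ref{zzdef-new-3.3}(1) then yields an injective $\phi'\colon\bigoplus_j\QQ_j\to\PP$. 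Hence for all $n$
\[
\sum_j\dim\QQ_j(n)\le\dim\PP(n)\le\sum_j\dim(\PP/I_j)(n)\le\sum_j\dim\QQ_j(n).
\]
\item So $\phi'$ is an isomorphism, and a posteriori $f$ is onto and $\PP/I_j=\QQ_j$. No inheritance statement is needed.
\end{itemize}
The paper makes the same comparison through uniqueness of saturation. Corollary~\ref{zzcor-new3-5.3} identifies the saturation $\QQ_j$ of each $\PP/I_j$. Lemma~\ref{zzlem-new3-5.5} and Proposition~\ref{zzpro-new-3.4} show that $\bigoplus_j\QQ_j$ is saturated, hence is the saturation of $\PP$, which must be $\PP$ itself.

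Step~1 also needs two repairs.
\begin{itemize}
\item A bound $\dim\PP(n)\le Cn$ does not bound the number of primes with GK-dimension-one quotient. Having $s$ such primes forces $\dim\PP(n)\ge s$ only beyond an arity threshold that depends on the primes chosen. What you need is the uniform bound $\mathfrak{m}(\PP)<\infty$ (Lemma~\ref{zzlem-new3-5.7}(4), resting on Theorem~\ref{zzthm0.5}(i)). Combined with the almost isomorphism \eqref{E5.8.1}, it gives $s\le\mathfrak{m}(\PP)$, as in Lemma~\ref{zzlem-new3-5.8}.
\item Primes with finite-dimensional quotient are excluded by left torsionfreeness, not by reducedness. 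Such primes contain $\PP_{\ge2}$. So semiprimeness gives $\PP_{\ge2}\circ\bigcap_jI_j\subseteq\PP_{\ge2}\cap\bigcap_jI_j=0$, where the $I_j$ are the GK-dimension-one primes. Hence $\bigcap_jI_j\subseteq\tau^l(\PP)=0$.
\end{itemize}
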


Recall that an operad $\PP$ is called {\it
uniformly bounded} if there is a finite number $N$ such that
$\dim \PP(n) \leq N$ for every~$n$. 
The next result concerns properties of finitely generated operads
of GK-dimension 1 over a field $\Bbbk$ with ${\text{char}}\; \Bbbk\neq 2$. 

\begin{theorem}
\label{zzthm0.5}
Suppose ${\text{char}}\; \Bbbk\neq 2$.
Let $\PP$ be a finitely generated operad of GK-dimension 1.
\begin{enumerate}
\item[(i)]
The Hilbert series of $\PP$ is rational, and thus $\PP$ is uniformly bounded.
\item[(ii)]
$\PP$ is right noetherian.
\end{enumerate}
\end{theorem}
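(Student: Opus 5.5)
The plan is to reduce both statements to structural facts about finitely generated operads of GK-dimension 1 that the paper will establish on the way to Theorem \ref{zzthm0.1}. The key intermediate statement I would aim for is the following. If $\PP$ is generated by finitely many elements of arity at most $d$, then there are an integer $N$ and a period $w\geq 1$ such that, for all $n\geq N$, the partial compositions with a fixed finite set of elements give surjections (or at least spanning maps)
\[
\PP(n)\otimes \PP(w+1)\to \PP(n+w).
\]
Moreover, $\PP(n)$ should be spanned by a bounded number of "tree monomials" modulo the symmetric group action. I would obtain this from the gap argument. GK-dimension 1 forces the growth of $\sum_{n\le m}\dim\PP(n)$ to be linear. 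The combinatorial analysis of tree monomials in a Gr\"obner-type normal form, as in the nonsymmetric proof \cite{QXZZ20}, then shows that long normal words are eventually periodic: they are obtained by repeatedly grafting one fixed "period" tree. With $\text{char}\,\Bbbk\neq 2$, the $\SG_n$-action on the span of these monomials should act through characters (trivial or sign) on the large part, in analogy with $\Com$ and $\Ope$. This is where I expect the characteristic hypothesis to enter: to split representations into $\pm1$ isotypic parts and to rule out nilpotent symmetric behavior.

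For (i), uniform boundedness comes first. With linear growth and the periodic normal form, $\dim\PP(n)$ is bounded by the number of admissible normal monomials of arity $n$, which is eventually bounded by a constant. The exact dimension sequence is then eventually periodic. Composition with the period element $\nu\in\PP(w+1)$ gives a linear map $\PP(n)\to\PP(n+w)$, and for large $n$ this map is surjective, by the spanning property above. Surjections between bounded-dimensional spaces eventually stabilize: a nonincreasing bounded integer sequence along each residue class mod $w$ must become constant, and then the maps are isomorphisms. Hence $\dim\PP(n)$ is eventually periodic with period $w$, and the Hilbert series $\sum \dim\PP(n)t^n$ equals a polynomial plus $p(t)/(1-t^w)$, which is rational. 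Uniform boundedness is immediate.

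For (ii), right noetherianity means that every ascending chain of right ideals (closed under $-\circ_i \PP$ and the symmetric action) stabilizes. Given a right ideal $I$, I would look at $I(n)$ for $n\geq N$. Right composition with $\nu$ maps $I(n)$ into $I(n+w)$, and by the eventual isomorphism $\PP(n)\cong\PP(n+w)$ the dimensions $\dim I(n)$ are nondecreasing along residue classes and bounded by the uniform bound. So $I$ is generated by $I(n)$ for $n< N'$ for some $N'$, and all these spaces are finite-dimensional. The key is that $N'$ can be bounded uniformly along a chain: each $\dim I_j(n)$ is a bounded integer invariant on each residue class. The family of pairs (residue class, eventual dimension) is then finite, so an ascending chain stabilizes in large arity. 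In bounded arity it stabilizes by finite-dimensionality of $\bigoplus_{n<N'}\PP(n)$. The main obstacle is the first step, namely establishing the eventual surjectivity of composition with a period element in the symmetric setting. The symmetric action mixes leaf orderings, and the normal-form argument from the nonsymmetric case must be adapted via a shuffle-operad Gr\"obner basis. I would try to prove it by passing to the associated shuffle operad, which has the same dimensions, and applying the nonsymmetric-style periodicity there.
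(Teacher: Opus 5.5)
\textbf{Verdict: genuine gap.} The decisive step is never established, and the normal-form route you sketch for it cannot work by itself.

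\textbf{What is sound.} Your second half works. Suppose you had a fixed $\nu\in\PP(w+1)$ and a fixed position $i$ such that $x\mapsto x\circ_i\nu$ maps $\PP(n)$ onto $\PP(n+w)$ for all $n\geq N$. Then your monotonicity argument gives eventual periodicity of $\dim\PP(n)$, hence rationality and uniform boundedness. Your chain argument then gives right noetherianity.

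\textbf{What fails in getting $\nu$.}
\begin{enumerate}
\item[(a)] The statement you actually aim for, that compositions $\PP(n)\otimes\PP(w+1)\to\PP(n+w)$ span, does not give surjectivity for a single $\nu$. It only yields $\dim\PP(n+w)\leq C\dim\PP(n)$, while your stabilization argument needs $\dim\PP(n+w)\leq\dim\PP(n)$. The phrase ``by the spanning property above'' is a non sequitur.
\item[(b)] More seriously, periodicity of normal forms in the style of \cite{QXZZ20} never uses the $\SG$-action, whether you apply it to a nonsymmetric operad or to the associated shuffle operad. Yet the theorem fails for finitely generated nonsymmetric operads of GK-dimension 1, as the paper notes after Theorem~\ref{zzthm0.5}, citing \cite[Construction 2.3 and Example 7.2]{QXZZ20}.
\item[(c)] Counting normal shuffle monomials does not bound $\dim\PP(n)$, because a periodic tree shape still admits many shuffle leaf labelings.
\end{enumerate}
So your sentence that the $\SG_n$-action ``should act through characters on the large part'' is not a side remark. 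It is the heart of the proof, and your proposal gives no mechanism for it.

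\textbf{The missing idea.} It is Theorem~\ref{zzthm-new-2.4}: a finitely generated operad of GK-dimension 1 is almost $\AG$-trivial, in every characteristic. The paper proves it as follows.
\begin{enumerate}
\item[(1)] Sub-quadratic growth produces windows of consecutive arities $b_1<n\leq b_1+a_2$, with $b_1$ as large as you like, in which $\dim\PP(n)<n-2$.
\item[(2)] Dickson's theorem (Lemma~\ref{zzlem1.3}, Proposition~\ref{zzpro1.4}) then forces $\AG_n$ to act trivially on $\PP(n)$ in those windows.
\item[(3)] This is propagated to all large arities using the periodic decomposition \eqref{E2.3.1} of tree monomials. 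The decomposition is taken in the nonsymmetric suboperad $\QQ$ generated by $\VV$, which satisfies $\QQ(n)\ast\Bbbk\SG_n=\PP(n)$, so no shuffle Gr\"obner theory is needed. The outer pieces are adjusted so their arities land in such a window, and the remaining cases are rewritten with the equivariance axiom \eqref{E1.1.4}, inducting on the monomial order.
\end{enumerate}

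\textbf{Where the characteristic enters.} Only after this step is $\text{char}\,\Bbbk\neq 2$ used, through \cite[Corollary 0.4]{LQXZ25}. That result relies on the equivalence between $\AG$-trivial operads and pseudo-graded Perm algebras. There, right compositions become, up to sign, right multiplications in a graded algebra, and both conclusions reduce essentially to commutative algebra. So your guess about the role of the characteristic is off: it is not needed to obtain the characters, only for this algebraic translation.

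If you want to keep your $\nu$-based reduction, the element $\nu$ has to be produced at that algebraic stage. It therefore depends on the $\AG$-triviality step you are missing.
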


In general, $\PP$ in Theorem \ref{zzthm0.5} is not left noetherian
\cite[Example 6.4(1)]{LQXZ25}.
It can be seen from \cite[Proof of Theorem 1.7(3)]{QXZZ20} that Theorem \ref{zzthm0.5}(i) fails if the GK-dimension is
$>1$. Surprisingly, Theorem \ref{zzthm0.5}
also fails for nonsymmetric operads, see
\cite[Construction 2.3 and Example 7.2]{QXZZ20}.   If
${\text{char}}\; \Bbbk= 2$, we suspect that Theorem
\ref{zzthm0.5} is still valid. However, our proof of Theorem
\ref{zzthm0.5} is based on \cite[Corollary 0.4]{LQXZ25}
where ${\text{char}}\; \Bbbk\neq 2$ is assumed.

We call~$\PP$ a {\it linear} operad if each $\PP(n)$ is 1-dimensional
over $\Bbbk$ for every~$n\geq 1$. We would like to mention an interesting
linear operad $\Mas$ which is a special case of Massey operads $\Mas^{a}_{b}$
\cite[Example 6.2]{LQXZ25} by taking $a=b=1$ and 
contains $\Ope$ as a suboperad.

\begin{example}
\label{zzex0.6}
Let $\Mas$ be the {\it Massey} operad defined by
\begin{enumerate}
\item[(i)]
$\Mas(n)=\Bbbk \mu_n$ for $n\geq 1$ and $\Mas(0)=0$,
\item[(ii)]
$\mu_n\ast \sigma=\sgn(\sigma) \mu_n$ for all $\sigma\in \SG_n$,
\item[(iii)]
for every $1\leq i\leq n$, we have
$$\mu_n\circ_i \mu_m=\begin{cases} 0, & {\text{if $n$ and $m$ are even,}}\\
(-1)^{i-1} \mu_{n+m-1}, &{\text{if $n$ is odd and $m$ is even,}}\\
\mu_{n+m-1}, & {\text{if $m$ is odd.}}
\end{cases}$$
\end{enumerate}
\end{example}

Operads of the form $\Lin_e(B)$ (in Corollary~\ref{zzcor-new-4.3}) or $\Lin_{o}(B)$ (in Remark~\ref{zzrema-new3-4.6}) are linear operads.  And we have the following

\begin{theorem} [${\text{Classification of saturated linear operads}}$]
\label{zzthm0.7}
Suppose ${\text{char}}\; \Bbbk\neq 2$.
Let $\PP$ be a finitely generated saturated linear operad.
Then $\PP$  is of the form 
$\Lin_e(B)$ or $\Lin_{o}(B)$, where~$B$ is nilpotent. 
 \end{theorem}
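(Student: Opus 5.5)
Since $\PP$ is linear, write $\PP(n)=\Bbbk\mu_n$ for all $n\geq 1$. The plan is to read off the $\SG_n$-action and the partial compositions directly, and then match the result with the constructions $\Lin_e(B)$ and $\Lin_o(B)$ of Corollary~\ref{zzcor-new-4.3} and Remark~\ref{zzrema-new3-4.6}.

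\emph{Step 1: the symmetric group action.} Each $\PP(n)$ is a one-dimensional $\SG_n$-representation. Since $\operatorname{char}\Bbbk\neq 2$, such a representation is either trivial or the sign representation. We need to know which character occurs in each arity. For this we use that $\PP$ is saturated (Definition~\ref{zzdef-new-3.3}) together with the equivariance of $\circ_i$. If $\mu_n\circ_i\mu_m\neq 0$, then the characters on $\PP(n)$, $\PP(m)$ and $\PP(n+m-1)$ must be compatible, because $\SG_n\times\SG_m$ embeds in $\SG_{n+m-1}$ through block permutations. By saturation enough compositions are nonzero, so the character is forced to be constant: either every $\mu_n$ is symmetric, or every $\mu_n$ is antisymmetric. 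In the antisymmetric case a shift sign $(-1)^{i-1}$ appears, as in $\Mas$ (Example~\ref{zzex0.6}). These two cases should correspond to the subscripts $e$ and $o$.

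\emph{Step 2: the composition constants.} Write $\mu_n\circ_i\mu_m=c_{n,m,i}\,\mu_{n+m-1}$. Equivariance under the transpositions that move the slot $i$ gives that $c_{n,m,i}$ does not depend on $i$ in the symmetric case, and equals $(\pm1)^{i-1}c_{n,m}$ in the antisymmetric case. The sequential and parallel associativity axioms then become cocycle-type identities on the $c_{n,m}$. I would then define $B$ as the graded (commutative) algebra spanned by the $\mu_n$, graded by arity minus one, with product $\mu_n\cdot\mu_m=c_{n,m}\mu_{n+m-1}$. The identities should say exactly that $B$ is associative and that $\PP\cong\Lin_e(B)$ or $\Lin_o(B)$.

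\emph{Step 3: nilpotence of $B$.} The claim to prove is that $B$ is nilpotent, in the appropriate sense for the construction of $\Lin_e$ and $\Lin_o$. This has to come from finite generation: if $\PP$ is generated by $\mu_{n_1},\dots,\mu_{n_k}$, then every $\mu_N$ with $N$ large is a composite of generators, and that should give the required finiteness or nilpotence statement on $B$.

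\emph{Main obstacle.} I expect the hardest part to be Step~1 combined with the sign bookkeeping in Step~2 when some $c_{n,m}$ vanish. In that situation the compatibility of characters is not forced directly, and both the even and the odd arities need their own analysis. My first attempt would be to use saturation to produce nonzero compositions that link any two arities, and then to verify the associativity identities case by case according to the parities of $n$ and $m$.
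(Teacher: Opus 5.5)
\textbf{There is a genuine gap.} Your conclusions in Steps~1 and~2 are true, but neither step contains the argument that makes the theorem hold, and Step~3 targets the wrong object.

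\textbf{The wrong $B$.} Step~2 only reconstructs the graded algebra $A=\mathcal F(\PP)$ spanned by the $\mu_n$ (degree $n-1$, product $\mu_n\circ_1\mu_m$). This is just the correspondence $\PP\cong G_{\AG\triv}(A)$ the paper already has. (Your Step~1 observation that each $\PP(n)$ is $\triv$ or $\sign$ already shows $\PP$ is $\AG\triv$.) But $\Lin_e(B)$ and $\Lin_o(B)$ are $G(B\{c\})$, where $B$ is a finite-dimensional $\mathbb{Z}/(b+1)$-graded algebra with Hilbert series $1+t+\cdots+t^b$ (Examples~\ref{zzex-new-3.6} and~\ref{zzrema-new-3.7}). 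The object you call $B$ is the paper's $A=B\{c\}$. It is infinite dimensional and torsionfree, so it is not nilpotent in any sense, and Step~3 aims at a false statement. The real content of the theorem is $A\cong B\{c\}$. This needs a homogeneous regular element $c$ of minimal positive degree $b+1$. One then sets $B=A/(c-1)$. Nilpotence holds because for $0\neq y\in A_i$ with $1\le i\le b$ one has $y^{b+1}\in A_{i(b+1)}=\Bbbk c^i$, and $y^{b+1}\neq 0$ would make $y$ regular of degree less than $b+1$.

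\textbf{The missing idea: a regular element.} Producing that regular element is also exactly what Step~1 lacks; the phrase ``by saturation enough compositions are nonzero'' has no mechanism behind it. All saturation gives directly is $\tau^l(\PP)=0$, i.e.\ each $\mu_m$ composes nontrivially into some $\mu_n$ with $n$ large. That ties the characters in arities $m$, $n$, $n+m-1$, but it does not link arbitrary arities.

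The paper instead uses finite generation of $A$, in three steps.
\begin{enumerate}
\item[(a)] Some homogeneous generator is non-nilpotent. For two non-nilpotent generators $x,y$ one has $y^{\deg x}=\alpha x^{\deg y}$ with $\alpha\neq 0$, because every $A_n$ is one-dimensional.
\item[(b)] Suppose one of them annihilated some $u\neq 0$. Nilpotent generators have vanishing high powers, so by (graded) commutativity every monomial of large degree would annihilate $u$, contradicting torsionfreeness. Hence a regular generator exists.
\item[(c)] Given a regular $x$ of degree $m$, dimension counting gives $A_{\geq m}=xA$. So all large arities lie in whichever of the ideals $A_e$, $A_o$ (equivalently $\PP_{\triv}$, $\PP_{\sign}$) contains $x$. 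The other ideal is finite dimensional, hence contained in $\tau^l(\PP)=0$ by Lemma~\ref{zzlem-new-3.2}(3).
\end{enumerate}
This is what forces a single character.

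Finite generation is essential here, not a technicality. Example~\ref{zzex6.3} is a saturated linear $\SG\triv$ operad with no regular homogeneous element of positive degree, and it is not of the form $\Lin_e(B)$.
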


We believe that the above results about operads of
GK-dimension 1 are helpful for understanding some
aspects of operads of higher GK-dimensions. 
Here is a list of interesting questions.

\begin{question}
\label{zzque0.8}
\begin{enumerate}
\item[(1)]
How can we classify all prime finitely generated operads of
GK-dimension 2?
\item[(2)]
If $\PP$ is in part (1), is its Hilbert series rational?
\item[(3)]
What invariants can we define for operads of GK-dimension 2?
\end{enumerate}
\end{question}

The article is organized as follows: In Section 1 we recall the
partial definition of an operad and other basic material
for later sections.  Bergman's gap theorem for symmetric operads (Theorem \ref{zzthm0.1}) and Theorem \ref{zzthm0.5}
are proved in Section~\ref{zzsec-gap}.  Then in Section~\ref{zzsec-sat}, we introduce the notion of saturated operads and establish some properties on certain saturated PGC algebras (Definition~\ref{zzdef-new-3.5}) that will be useful in the sequel.
In Section~\ref{zzsec-cate-equi}, we develop a little bit of the category equivalence results  proved in \cite{LQXZ25}. Section~\ref{zzsec5} is devoted to the proof of classification
results, namely, Theorems \ref{zzthm0.3}, \ref{zzthm0.4} and
\ref{zzthm0.7}.  In Section~\ref{zzsec6} we give some comments,
remarks, examples, and questions. Finally, the appendix offers some results that are related to the main topic of the paper.

\section{Preliminaries}
\label{zzsec1}
This section contains some definitions and preliminary
materials that will be used in later sections. We first recall the partial definition \cite[Section 5.3.4]{LV12}
of a (symmetric) operad.

\begin{definition}
\label{zzdef1.1}
An \emph{operad} consists of the following data:
\begin{enumerate}
\item[(i)]
a sequence $\{\PP(n)\}_{n\geq 0}$ of right $\Bbbk\SG_n$-modules,
whose elements are called \emph{$n$-ary operations},
\item[(ii)]
the {\it arity} of an element $\nu\in \PP(n)$ is defined to be
$\Ar(\nu)=n$,
\item[(iii)]
an element $1_{\PP}\in \PP(1)$ called the \emph{identity},
\item[(iv)]
for all integers $m\ge 1$, $n \ge0$, and $1\le i\le m$, a
\emph{partial composition map}
\[-{\circ}_{i}-\colon \PP(m) \otimes \PP(n) \to
\PP(m+n-1), \]
\end{enumerate}
satisfying the following axioms:
\begin{enumerate}
\item[(a)]
for $\theta\in \PP(n)$ and $1\leq i\leq n$,
$\theta\circ_i  1_{\PP} = \theta =1_{\PP} \circ_1 \theta;
$
\item[(b)]
for $\lambda \in \PP(l)$, $\mu\in \PP(m)$ and $\nu\in \PP(n)$,
\begin{align}
\label{E1.1.1}\tag{E1.1.1}
(\lambda   \circ_i  \mu) \circ_{i-1+j}  \nu
&=\lambda  \circ_i  (\mu \circ_j  \nu),
\quad 1\le i\le l, 1\le j\le m,\\
\label{E1.1.2}\tag{E1.1.2}
(\lambda   \circ_i  \mu) \circ_{k-1+m} \nu
&=(\lambda  \circ_k \nu) \circ_i  \mu,
\quad 1\le i<k\le l;
\end{align}
\item[(c)]
for $\mu\in \PP(m)$, $\phi\in \SG_m$, $\nu\in \PP(n)$ and $\sigma\in \SG_{n}$,
\begin{align}
\label{E1.1.3}\tag{E1.1.3}
\mu  \circ_i  (\nu \ast \sigma)= &(\mu  \circ_i  \nu)\ast \sigma',\\
\label{E1.1.4}\tag{E1.1.4}
(\mu\ast \phi)  \circ_i  \nu=
&(\mu  \underset{\phi(i)}{\circ} \nu)\ast \phi'',
\end{align}
where
\begin{equation}\notag
\begin{split}
\sigma'= \vartheta_{m; 1, \cdots, 1, \underset{i}{n}, 1, \cdots, 1}
(1_m, 1_1, \cdots, 1_1, \underset{i}{\sigma}, 1_1, \cdots, 1_1),\\
\phi''= \vartheta_{m; 1, \cdots, 1, \underset{i}{n}, 1, \cdots, 1}
(\phi, 1_1, \cdots,1_1, \underset{i}{1_n}, 1_1 \cdots,  1_1).
\end{split}
\end{equation}
(see \cite[E8.0.1]{BYZ20} for the definition of
$\vartheta_{m; 1, \cdots, 1, \underset{i}{n}, 1, \cdots, 1}$, see also~\cite[Sec 5.3.4]{LV12} ).
\end{enumerate}
\end{definition}
 
\begin{definition}
\label{zzdef1.2}
The \emph{Gelfand-Kirillov dimension} (or \emph{GK-dimension}
for short) of a locally finite operad $\PP$ is defined to be
\begin{equation}
\label{E1.2.1}\tag{E1.2.1}
\GK(\PP):=\limsup_{n\to\infty} \;
\log_n\left(\sum_{i=0}^n\dim  \PP(i)\right).
\end{equation}
\end{definition}

The GK-dimension of a locally finite graded algebra is defined analogously. In the case the graded algebra is finitely generated, it is straightforward to show that this definition coincides with the classical definition of the GK-dimension of associative algebras.

The {\it Hilbert series} (also called {\it generating series}) of
$\PP$ is defined to be the formal power series \cite[(0.1.2)]{KP15}
\begin{equation}
\label{E1.2.2}\tag{E1.2.2}
H_{\PP}(t):=\sum_{n=0}^{\infty} \dim \PP(n) \; t^n.
\end{equation}
In \cite{KP15} and \cite{QXZZ20} it is denoted by $G_{\PP}(t)$.
The Hilbert series of a graded vector space or a graded algebra
is defined in the same way.

Let $\PP$ be an operad and $w$ be an  integer $\geq 2$.
Recall from \cite{LQXZ25} that the suboperad $\PP_{\{w\}}$
of $\PP$ is defined as
\begin{equation}
\label{E1.2.3}\tag{E1.2.3}
\PP_{\{w\}}(n)=\begin{cases}
\Bbbk 1_{\PP} & n=1,\\
0& 2\leq n\leq w-1,\\
\PP(n) & n\geq w.\end{cases}
\end{equation}
For an $\NN$-graded algebra $A$, $A_{\{w\}}$ can be defined similarly.

Recall that two operads $\PP$ and $\QQ$ are {\it almost isomorphic}
if there is an integer $w$ such that $\PP_{\{w\}}$ and $\QQ_{\{w\}}$
are isomorphic.

We say $\PP$ is {\it left noetherian} (resp. {\it right
noetherian}) if the left {\rm{(}}resp. right{\rm{)}}
ideals of $\PP$ satisfy the ascending chain condition.

Let $I$ and $J$ be two subcollections of an operad $\PP$. Then
$I\circ J$ is the $\SG$-submodule of~$\PP$ generated by
$x\circ_i y$ for all $x\in I$, $y\in J$ and $1\leq i\leq \Ar(x)$.
We define a prime (or semiprime) operad as follows.

\begin{definition}
\label{zzdef1.5}
Let $\PP$ be an operad.
\begin{enumerate}
\item[(1)]
We say $\PP$ is {\it prime} if for all nonzero ideals $I$
and $J$ of $\PP$, $I\circ J\neq 0$.
An ideal $I$ is called {\it prime} if $\PP/I$ is a
prime operad.
\item[(2)]
We say $\PP$ is {\it semiprime} if the intersection of all
prime ideals of $\PP$ is 0.
\end{enumerate}
\end{definition}

Let $\PP$ be an operad (which is reduced in this article). We
define a unital ${\mathbb N}$-graded associative algebra
$A:=(\oplus_{i=0}^{\infty} A_i, \cdot)$ by
\begin{enumerate}
\item[(i)]
$A_i=\PP(i+1)$, for all $i\geq 0$, and
\item[(ii)]
$x\cdot y= x\circ_1 y$ for all homogeneous elements in $A$.
\end{enumerate}
By Definition \ref{zzdef1.1}(a) and by \eqref{E1.1.1} with $i=1=j$, $A$ is a unital graded
associative algebra. We denote it by $A_{\PP}$. We now define
a functor ${\mathcal F}$ from the category of operads,
denoted by $\Op$, to the category of unital ${\mathbb N}$-graded
associative algebras, denoted by $\NGAss$ as follows: for every
$\PP\in \Op$, define~${\mathcal F}(\PP)=A_{\PP}$, and ${\mathcal F}(\varphi)=\varphi$  for every morphism $\varphi$ from $\PP$ to $\QQ$ in $\Op$. It is clear that~${\mathcal F}$ is a functor from $\Op$ to $\NGAss$
\cite[Definition 3.1]{LQXZ25}. By definition, we have
$H_{\PP}(t)=t H_{A_{\PP}}(t)$. Consequently, ${\mathcal F}$
preserves the GK-dimension and rationality of the Hilbert series.

We shall frequently apply the equivalence of categories of certain operads and certain graded associative algebras in the sequel. And all algebras involved in this article are associative
(with possibly additional structures).

We conclude this section with a classical result of Dickson on $\SG_n$-modules.

\begin{lemma} [\cite{Di08}]
\label{zzlem1.3}
Let $n\geq 9$. Suppose $M$ is a simple right $\SG_n$-module
of dimension $<n-2$. Then $M$ is one dimensional.
\end{lemma}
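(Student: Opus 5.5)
The plan is to reduce to a statement about faithful representations of the alternating group and then use an induction on $n$. We may extend scalars to $\overline{\Bbbk}$ and work with a composition factor. A simple $\overline{\Bbbk}\SG_n$-module of dimension $<n-2$ can only come from summands of $M\otimes\overline{\Bbbk}$ of at most that dimension. Conversely, if all composition factors of $M\otimes\overline{\Bbbk}$ are one-dimensional, then the commutator subgroup $\mathbb{A}_n$ acts unipotently on $M$. Because $M$ is simple and $\mathbb{A}_n$ is normal, the fixed space $M^{\mathbb{A}_n}$ is a nonzero $\SG_n$-submodule, so it equals $M$. Hence $\SG_n$ acts through $\SG_n/\mathbb{A}_n\cong \mathbb{Z}/2$, and a simple module for that group over $\Bbbk$ is one-dimensional. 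So it suffices to treat $\Bbbk$ algebraically closed.

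Suppose $M$ is simple with $1<\dim M<n-2$. Then $\mathbb{A}_n$ cannot act trivially, since otherwise $M$ would be a simple $\mathbb{Z}/2$-module and hence one-dimensional. Because $\mathbb{A}_n$ is simple for $n\ge 5$, the kernel of $\mathbb{A}_n\to GL(M)$ is trivial, so $\mathbb{A}_n$ embeds in $GL_d(\Bbbk)$ with $d\le n-3$. The heart of the argument is the classical bound (Dickson, Wagner) that a faithful representation of $\mathbb{A}_n$ over any field has degree at least $n-2$ for $n\ge 9$; this gives the contradiction.

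I would prove this bound by induction on $n$, restricting to $\mathbb{A}_{n-1}\subset\mathbb{A}_n$. The restriction is still faithful, so some composition factor $N$ is a nontrivial, hence faithful, $\mathbb{A}_{n-1}$-module. By induction $\dim N\ge n-3$, which forces $\dim M=n-3$ and $M|_{\mathbb{A}_{n-1}}=N$ to be simple of minimal possible degree. Restricting further to $\mathbb{A}_{n-2}$ pins $M$ down as essentially the heart of the natural permutation module. I would then compare with the induced module $\operatorname{Ind}_{\mathbb{A}_{n-1}}^{\mathbb{A}_n}\Bbbk$, that is, the permutation module on $n$ points. Its nontrivial composition factors have dimension $n-1$, or $n-2$ when $\operatorname{char}\Bbbk\mid n$, and this excludes degree $n-3$.

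For the base case $n=9$ I would rely on the known modular character tables of $\mathbb{A}_9$. Alternatively, a direct argument can use the elementary abelian subgroup generated by the double transpositions $(12)(34),(13)(24),(56)(78),(57)(68),\dots$ when $\operatorname{char}\Bbbk\ne2$, and $3$-subgroups generated by disjoint $3$-cycles when $\operatorname{char}\Bbbk=2$. In either case the group is simultaneously diagonalizable, and faithfulness forces at least as many distinct weights as the rank. I expect this base case, together with the degree-$n-3$ step in small characteristic, to be the main obstacle. Since the lemma is classical, I would ultimately cite \cite{Di08} for these computations rather than redo them in full.
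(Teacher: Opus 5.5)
Your final argument is correct and is essentially the paper's, which gives no proof of this lemma. It cites \cite{Di08} and points to Wagner's least-degree theorems \cite{Wag76,Wag77}, which are exactly the bound you invoke. Your reduction is sound and is a small but genuine addition. If $\dim M\ge 2$, then $\AG_n$ cannot act trivially, because a simple $\Bbbk[\SG_n/\AG_n]$-module is one-dimensional in every characteristic. Since $\AG_n$ is simple, $M|_{\AG_n}$ is then faithful of degree $\le n-3$, which contradicts the least-degree bound. This works over any field, so your first paragraph (the passage to $\overline{\Bbbk}$) is correct but unnecessary.

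Do not present the inductive sketch as a proof of the bound, though. As written it fails in three places.

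\begin{enumerate}
\item[(i)] The comparison with $\operatorname{Ind}_{\AG_{n-1}}^{\AG_n}\Bbbk$ does not apply. Frobenius reciprocity makes $M$ a quotient of the natural permutation module only if $M$ has a nonzero $\AG_{n-1}$-fixed vector. You have just shown that $M|_{\AG_{n-1}}=N$ is simple and nontrivial, so $M^{\AG_{n-1}}=0$, and nothing forces $M$ to be a subquotient of that permutation module.
\item[(ii)] The step ``restricting to $\AG_{n-2}$ pins $M$ down as the heart'' needs an inductive hypothesis that classifies the faithful modules of least degree. The numerical bound alone is not enough, and that classification is substantially more than the bound itself.
\item[(iii)] The base-case argument via simultaneously diagonalizable abelian subgroups only gives degree at least the rank of the subgroup. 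The $2$-rank of $\AG_9$ is $4$, and the subgroup generated by three disjoint $3$-cycles has rank $3$, whereas you need degree $\ge 7$.
\end{enumerate}

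So your proof rests entirely on the citation, which is the same footing as in the paper.
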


Also see \cite[Theorem (1.1)]{Wag76} and \cite[Theorem 1.1]{Wag77}
for related results. It is well-known that there exist only two
1-dimensional $\SG_n$-modules, namely, the trivial representation,
denoted by $\triv$, and the sign representation, denoted by $\sign$.

Let~char $\Bbbk=p $.
If $n\geq 5$ and $p=2$,
by \cite[Lemma 1.3]{LQXZ25},
$\Ext^1_{\Bbbk \SG_n}(\triv, \triv)=\Bbbk$,
and denote by $\extn$  the 2-dimensional indecomposable right $\SG_n$-module
corresponding to any nonzero element in
$\Ext^1_{\Bbbk \SG_n}(\triv, \triv)$.

\begin{proposition}
\label{zzpro1.4}
Let $M$ be a right $\SG_n$-module of dimension
$<n-2$ with~$n\geq 9$.
\begin{enumerate}
\item[(1)]
Suppose $p\neq 2$. Then $M$ is a direct sum of copies of $\triv$
and $\sign$. As a consequence, $M(1-g)=0$ for all $g\in \AG_n$.
\item[(2)]
Suppose $p=2$. Then $M(1-g)=0$ for all $g\in \AG_n$. As a
consequence, $M$ is a direct sum of copies of $\triv$ and
$\extn$.
\end{enumerate}
\end{proposition}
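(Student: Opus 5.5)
The plan is to reduce everything to Lemma \ref{zzlem1.3} applied to composition factors, and then show that the alternating group acts trivially by a group-theoretic argument. First, take a composition series $0=M_0\subset M_1\subset\cdots\subset M_r=M$ of right $\Bbbk\SG_n$-modules. Each factor $M_j/M_{j-1}$ is simple of dimension $\le \dim M<n-2$, so by Lemma \ref{zzlem1.3} (with $n\ge 9$) it is one-dimensional, hence isomorphic to $\triv$ or $\sign$. When $p=2$ these two coincide, so all factors are $\triv$.

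The key step, which I expect to be the only non-formal point, is to show that $\AG_n$ acts trivially on $M$; this part does not depend on $p$. Choose a basis of $M$ adapted to the flag $\{M_j\}$. The action gives a group homomorphism $\rho\colon\SG_n\to GL(M)$ whose image lies in the group $T$ of (block) upper triangular matrices with respect to this flag. Since the diagonal part of $T$ is abelian (all blocks are $1\times 1$), the commutator subgroup $[T,T]$ lies in the unipotent subgroup $U$ of $T$. For $n\ge 5$ the group $\AG_n$ is the commutator subgroup of $\SG_n$, so $\rho(\AG_n)\subseteq [T,T]\subseteq U$. The group $U$ is nilpotent, hence solvable, so $\rho(\AG_n)$ is a solvable quotient of $\AG_n$. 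But $\AG_n$ is simple (in particular perfect) for $n\ge 5$, so $\rho(\AG_n)$ is trivial. Therefore $m g=m$ for all $m\in M$ and $g\in \AG_n$, i.e.\ $M(1-g)=0$. This gives the second assertion in (1) and the first assertion in (2).

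Consequently $M$ is a module over $\Bbbk[\SG_n/\AG_n]\cong \Bbbk C_2$.

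If $p\neq 2$, then $\Bbbk C_2$ is semisimple with simple modules $\triv$ and $\sign$ (pulled back to $\SG_n$). Hence $M$ is a direct sum of copies of $\triv$ and $\sign$, which proves (1).

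If $p=2$, then $\Bbbk C_2\cong \Bbbk[x]/(x^2)$ with $x=1+\tau$, where $\tau$ is a transposition. By Jordan form (equivalently, the structure of modules over $\Bbbk[x]/(x^2)$), its indecomposable modules are the trivial module and the $2$-dimensional regular module. The latter, pulled back to $\SG_n$, is a nonsplit extension of $\triv$ by $\triv$. Since $\Ext^1_{\Bbbk\SG_n}(\triv,\triv)=\Bbbk$ is one-dimensional, any nonsplit extension of $\triv$ by $\triv$ is isomorphic to $\extn$. Hence $M$ is a direct sum of copies of $\triv$ and $\extn$, which proves (2).
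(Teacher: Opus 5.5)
Your proof is correct, but it is organized differently from the paper's. You prove that $\AG_n$ acts trivially first, and uniformly in $p$. The image of $\AG_n=[\SG_n,\SG_n]$ lies in the commutator subgroup of the upper triangular group for a complete flag, hence in its unipotent (nilpotent) part, and a perfect group has no nontrivial solvable image. The paper uses this idea only for $p=2$, and in a more hands-on form. By induction on $\dim M$, the off-diagonal part of the action relative to a one-dimensional submodule gives homomorphisms $f_i\colon \AG_n\to(\Bbbk,+)$, which vanish because $\AG_n$ is simple and non-abelian. This is the same mechanism, applied one layer of the unipotent group at a time. For $p\neq 2$ the paper goes the other way round: it cites \cite[Lemma 1.3(1)]{LQXZ25} to split $M$ into copies of $\triv$ and $\sign$, and then reads off the $\AG_n$-triviality. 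Your ordering makes part (1) self-contained, since after the group-theoretic step you only need that $\Bbbk C_2$ is semisimple when $p\neq 2$, and it treats both characteristics with one argument. The paper's route for (1) avoids the group theory at the cost of an external splitting lemma. For $p=2$ the last step is the same in both. The paper phrases it as injectivity of $\extn$ over $\Bbbk(\SG_n/\AG_n)$. You use the structure of $\Bbbk[x]/(x^2)$-modules together with $\dim\Ext^1_{\Bbbk\SG_n}(\triv,\triv)=1$ to identify the $2$-dimensional indecomposable with $\extn$.
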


\begin{proof} (1) By Lemma \ref{zzlem1.3}, $M$ has a composition
series such that the composition factors consist of $\triv$ and $\sign$ only. By
\cite[Lemma 1.3(1)]{LQXZ25}, $M$ is a direct sum of copies of
$\triv$ and $\sign$. Since $\triv(1-\sigma)=\sign(1-\sigma)=0$ for all
$\sigma\in \AG_n$, the consequence follows.

(2) By Lemma \ref{zzlem1.3}, $M$ has a composition series
such that the composition factors consist of $\triv$ only. We prove the assertion by induction
on the dimension of $M$. If~$\dim M=1$, it is trivial. Now assume
that the assertion holds for all $M'$ with~$\dim M'\leq m-1$ and
let $M$ have dimension $m$. Let $M_0$ be a 1-dimensional submodule
with basis element $x_1$ and $M/M_0$ has a basis
$\{\bar y_1,\cdots,\bar y_{m-1}\}$. By induction hypothesis,
$\AG_n$-actions on $M_0$ and $M/M_0$ are trivial. Therefore,
for every $\sigma\in \AG_n$, we have
$$\begin{aligned}
x_1\ast \sigma &=x_1,\\
y_i\ast \sigma &=y_i+f_i(\sigma) x_1.
\end{aligned}
$$
It is easy to see that each $f_i$ is a group homomorphism from
$\AG_n$ to $(\Bbbk,+)$. Since $\AG_n$ is simple, $f_i=0$ for all
$i$. This means that the $\AG_n$-action on $M$ is trivial, as required.

Now $M$ is a right module over $\Bbbk (\SG_n/\AG_n)$. As a module
over $\Bbbk (\SG_n/\AG_n)$, $\extn$ is injective. This
implies that $M$ is a direct sum of copies of $\triv$ and $\extn$.
\end{proof}

The above proposition will be used in Section~\ref{zzsec-gap}
as one of the key steps.

\section{Bergman's gap theorem for symmetric operads}
\label{zzsec-gap}

Recall that operads are always locally finite in this paper and an operad is called \emph{connected} if~$\dim \mathcal{P}(1)=1$.  We
say that an operad $\PP$ has {\it sub-quadratic growth} if
$$\limsup_{n\to\infty}
\frac{\sum_{i=0}^n \dim \PP(i)}{n^2}
=0.$$
It is easy to see that if $\GK(\PP)<2$, then $\PP$ has
sub-quadratic growth. We shall see that the converse is also true for finitely generated operads (Theorem \ref{zzthm-new-2.1}).
The main result of this section is the following Bergman's gap
theorem for finitely generated operads which is slightly
stronger
than Theorem \ref{zzthm0.1}. Both symmetric and
nonsymmetric operads will be involved in the proof of Theorem~\ref{zzthm-new-2.1}.

\begin{theorem}
\label{zzthm-new-2.1}
Let $\PP$  be   a finitely generated symmetric operad that
has sub-quadratic growth. Then $\GK(\PP)\leq 1$.
\end{theorem}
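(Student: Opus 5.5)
Sub-quadratic growth means $\sum_{i\le n}\dim\PP(i)=o(n^2)$. This forces $\dim\PP(n)<n-2$ for infinitely many $n$, and in fact for all $n$ in long stretches. The plan is to use Proposition \ref{zzpro1.4} to show that, in such arities, $\PP(n)$ is almost a trivial-or-sign module. We then transfer the problem to the nonsymmetric setting, where \cite[Theorem 1.1]{QXZZ20} (the nonsymmetric gap theorem) applies. Because the argument may need passing to $\PP_{\{w\}}$, we also note at the outset that $\GK$ is unchanged under this truncation.

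\textbf{Step 1: control the symmetric group action.} Let $\PP$ be generated by finitely many operations of arity at most $d$. I would first show that $\dim\PP(n)$ is eventually bounded by $n-3$ on a set of arities that is large enough to propagate. For any $\nu\in\PP(n)$, the operations $\nu\circ_i\mu$ with $\mu$ a generator of arity $k$ span, together with the $\SG$-action, $\PP(n+k-1)$-pieces. So a large dimension in one arity spreads to nearby arities, and sub-quadratic growth should give $\dim\PP(n)<n-2$ for all $n\gg0$. If this cannot be obtained directly, I would argue on those $n$ where it holds, using the generation to move between arities.

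For such $n\ge 9$, Proposition \ref{zzpro1.4} gives $M(1-g)=0$ for all $g\in\AG_n$. Hence $\AG_n$ acts trivially on $\PP(n)$, and $\SG_n$ acts through $\SG_n/\AG_n\cong \mathbb Z/2$.

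\textbf{Step 2: pass to a nonsymmetric operad.} Consider the underlying nonsymmetric operad $\PP^{ns}$, that is, the same spaces and partial compositions with the symmetric action forgotten. Finite generation of $\PP$ as a symmetric operad gives generation of $\PP^{ns}$ by the $\SG$-orbits of the generators. Those orbits span finite-dimensional spaces, but the arities are unbounded only through compositions. Since the action on large arities factors through $\pm1$, every element $(\nu\circ_i\mu)\ast\sigma$ with $n$ large equals $\pm$ a composition of permuted generators taken in a fixed order.

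Concretely, I would show by induction on arity that for $n\ge N$, $\PP(n)$ is spanned by nonsymmetric compositions of the finite set $\{\mu\ast\tau\}$, together with the finitely many elements $\PP(m)$, $m<N$. Any permutation $\sigma\in\SG_n$ can be written as $\sigma'\cdot a$ with $a\in\AG_n$ and $\sigma'$ in a fixed small set. The $a$ acts trivially, and $\sigma'$ can be chosen to be a block permutation compatible with the tree decomposition, via (E1.1.3) and (E1.1.4). Thus $\PP^{ns}$ is a finitely generated nonsymmetric operad with the same Hilbert series.

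\textbf{Step 3: conclude.} Since $\GK$ depends only on $\dim\PP(n)$, we have $\GK(\PP^{ns})=\GK(\PP)<2$. By \cite[Theorem 1.1]{QXZZ20}, $\GK(\PP^{ns})\le1$.

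The main obstacle is Step 1 combined with the rewriting in Step 2. Two things are needed: showing $\dim\PP(n)<n-2$ for all large $n$, not just infinitely many, and checking carefully that triviality of $\AG_n$ lets every permuted composition be rewritten as a nonsymmetric one using boundedly many generators. I expect to handle the first by an averaging argument: if $\dim\PP(n)\ge n-2$ for some $n$, composing with a fixed generator injects enough into a window of subsequent arities to contradict $o(n^2)$ growth.
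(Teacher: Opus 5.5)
\textbf{There is a genuine gap in Step 1, and it is the heart of the proof.} Sub-quadratic growth gives only what your first sentence says: arbitrarily long \emph{windows} of arities $b_1<n\le b_1+a_2$ with $\dim\PP(n)<n-2$ (Lemma \ref{zzlem-new-2.2}(1)). By Proposition \ref{zzpro1.4} this gives $\AG_n$-triviality inside those windows, but nothing you write upgrades it to all $n\gg 0$.

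Your ``injection'' argument fails for two reasons. First, $\nu\mapsto\nu\circ_i\mu$ can have a large kernel. Second, a single large layer cannot contradict $o(n^2)$ growth: the free operad on one binary operation, modulo all operations of arity $>N$, is finite dimensional although $\dim\PP(N)\ge N-2$. (In characteristic $\neq 2$ an eventual uniform bound does hold for finitely generated operads of GK-dimension $1$. The paper obtains it only a posteriori, from Theorem \ref{zzthm0.5}, whose proof uses the present theorem.)

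So everything rests on your fallback ``use the generation to move between arities,'' and that needs a real mechanism. The paper supplies it as follows.
\begin{itemize}
\item Work in the nonsymmetric suboperad $\QQ$ generated by $\VV=\sum_{i=2}^t\PP(i)$.
\item By the structure theory of \cite{QXZZ20}, every irreducible tree monomial of large arity has the form $T=v_1\circ_{i_1}(v\circ_{i_2}(x\circ_j v_3))$. Here the middle part is long, single-branched, and has a period $w$ independent of $T$, and $\Ar(v_1),\Ar(v_3)\in[M,M+t]$.
\item $M$ can be slid so that $[M+1,M+(w+2)t]$ is a window of $\AG$-triviality.
\item If $T$ branches rightmost, a 3-cycle $(k,k+1,k+2)$ only involves a subtree whose arity lies in the window, so it fixes $T$.
\item Otherwise, enlarge $v_1$ to $v_1''$, whose arity is still in the window. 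Use an even $\phi$ with $v_1''\ast\phi=v_1''$ and \eqref{E1.1.4} to write $T=T'\ast\phi''$ with $T'$ smaller in the path-sequence order. Then conclude by induction on that well-order and by conjugating the $3$-cycle.
\end{itemize}
This propagation from windows to all large arities is exactly what your proposal lacks.

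\textbf{Two further points.}

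In Step 3 you use $\GK(\PP)<2$, but sub-quadratic growth does not imply this: partial sums of order $n^2/\log n$ are $o(n^2)$ yet have GK-dimension $2$. So the nonsymmetric gap theorem in its GK form does not apply. The paper instead obtains $\GK(\QQ)\le 1$ from a single small layer, $\dim\bigl((\sum_{i\le d}\VV^i)/(\sum_{i<d}\VV^i)\bigr)\le d-3$, together with \cite[Theorem 5.12]{QXZZ20}.

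In Step 2, $\AG_n$-triviality does not make odd permutations act on elements by $\pm 1$. They act as an involution, and in characteristic $2$ non-semisimply via $\extn$. The correct rewriting is $T\ast\sigma=T\ast\sigma_T$, where $\sigma_T$ is a transposition of two inputs of one generator, so that $T\ast\sigma_T\in\QQ(n)$ by \eqref{E1.1.3}. In fact Step 2 is unnecessary: once $\AG_n$ acts trivially on $\QQ(n)$ for $n\gg0$, $\PP(n)=\QQ(n)+\QQ(n)\ast\sigma$ gives $\dim\PP(n)\le 2\dim\QQ(n)$ directly.
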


We need a few steps of reductions and then use the gap theorem for
nonsymmetric operads \cite[Theorem 1.1]{QXZZ20}.

By \cite[Lemma 1.4]{LQXZ25}, we can
assume $\PP=\PP_{\{w\}}$ for some $w\geq 2$. In particular,  $\PP(0)=0$ and $\PP(1)=\Bbbk$. Let $\VV$ be a finite
dimensional $\SG$-module that generates $\PP$ as a symmetric
operad. Then $\VV\subseteq \sum_{i=2}^t \PP(i)$ where $t$ is the
largest arity of nonzero elements in $\VV$. Without loss of
generality, we can assume that $\VV=\sum_{i=2}^t \PP(i)$. When the $\SG$-action is forgotten, $\PP$ becomes a connected nonsymmetric operad (which is not necessarily finitely generated by  $\VV$ as a nonsymmetric operad).
Let $\QQ$ be the nonsymmetric suboperad
of $\PP$ generated by $\VV$. Note that $\QQ\subseteq \PP$ and that
$f\ast \sigma$ could be outside of $\QQ$ for $f\in \QQ(n)$ and
$\sigma\in \SG_n$. By equivariance axiom (Definition \ref{zzdef1.1} (c))
of a symmetric operad, we have
$$
\QQ(n)\subseteq \QQ(n)\ast (\Bbbk   \SG_n)=\PP(n)
$$
for all $n$.

Given two subcollections $\VV_1$ and $\VV_2$ of $\QQ$, let $\VV_1
\circ_{ns} \VV_2$ be the subcollection of $\QQ$
spanned by all elements of the form $v \circ_i w$ for $v\in \VV_1$,
$w\in \VV_2$ and $1\leq i\leq \Ar(v)$. (This is slightly different
from $\VV_1\circ \VV_2$ for subspaces $\VV_1$ and $\VV_2$ in a
symmetric operad). Given
a subcollection $\WW$ of $\QQ$, let $\WW^0=(0,\Bbbk  ,0,0,\dots)$,
and inductively, let $\WW^m=\WW^{m-1}\circ_{ns} \WW$ for $m\geq1$.

 If $\PP$ has sub-quadratic growth, then so does $\QQ$. The
following lemma says that we can apply \cite[Theorem 5.12]{QXZZ20} to $\QQ$. 

\begin{lemma}
\label{zzlem-new-2.2}
Retain the notation as above. 
\begin{enumerate}
\item[(1)]
For all fixed integers $a_1,a_2, a_3>1 $, there exists
$b_1\geq a_1$ such that
$$\dim \PP(n)< \frac{1}{a_3} n$$
for all $b_1< n\leq b_1+a_2$.
As a consequence, the assertion 
holds true for $\QQ$ as well.
\item[(2)]
There is an integer $d\geq 3$ such that
$$\dim \left( \left(\sum_{i=0}^d \VV^i\right)/
\left(\sum_{i=0}^{d-1} \VV^i\right)\right) \leq d-3.$$
As a consequence $\GK(\QQ)\leq 1$.
\item[(3)]
If the right $\AG_n$-action on $\QQ(n)$ is trivial for
$n\gg 0$, then $\GK(\PP)\leq 1$.
\item[(4)]
If the right $\AG_n$-action on $\QQ(n)$ is trivial for
$n\gg 0$, then $\PP(n)=\QQ(n)$ for~$n\gg 0$.
Consequently, $\PP$ is finitely generated as a nonsymmetric
operad.
\end{enumerate}
\end{lemma}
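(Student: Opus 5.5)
The plan is to prove the four parts in order. Parts (1) and (2) are counting arguments driven by sub-quadratic growth. Part (3) is a dimension comparison. Part (4) is a structural argument about tree monomials in $\QQ$, using equivariance (Definition \ref{zzdef1.1}(c)).

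For (1), I will argue by contradiction. Suppose that for every $b\geq a_1$ some $n\in(b,b+a_2]$ has $\dim\PP(n)\geq n/a_3$. Split $[a_1,N]$ into consecutive windows of length $a_2$; each window contributes at least one such $n$. Summing gives $\sum_{i\leq N}\dim\PP(i)\geq cN^2$ for some $c>0$ depending only on $a_2,a_3$, which contradicts sub-quadratic growth. Since $\QQ(n)\subseteq\PP(n)$, the same bound holds for $\QQ$.

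For (2), write $F_d=\sum_{i=0}^d\VV^i$. Because $\VV\subseteq\sum_{i=2}^t\PP(i)$, every element of $\VV^i$ has arity at most $i(t-1)+1$. Hence $F_d\subseteq\sum_{n\leq d(t-1)+1}\QQ(n)$.
\begin{itemize}
\item Suppose $\dim(F_d/F_{d-1})\geq d-2$ for all $d\geq 3$. Then $\dim F_d\geq d^2/2-O(d)$.
\item With $N=d(t-1)+1$ this yields $\sum_{n\leq N}\dim\QQ(n)\geq N^2/(2(t-1)^2)-O(N)$, contradicting sub-quadratic growth of $\QQ$.
\item So some $d\geq 3$ satisfies the displayed inequality.
\end{itemize}
Together with the arity-window estimate from (1), this is exactly the input of \cite[Theorem 5.12]{QXZZ20}, the nonsymmetric gap argument. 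That theorem gives $\GK(\QQ)\leq 1$. The step I expect to need the most care is matching the hypotheses of \cite[Theorem 5.12]{QXZZ20} precisely. In particular, the filtration there is by the number of generators $\VV^i$ rather than by arity, and $\QQ$ is connected since $\PP=\PP_{\{w\}}$.

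For (3), assume $\AG_n$ acts trivially on $\QQ(n)$ for $n\gg0$. Then $\PP(n)=\QQ(n)\ast\Bbbk\SG_n=\QQ(n)+\QQ(n)\ast\tau$ for any fixed transposition $\tau$, because $\SG_n=\AG_n\cup\AG_n\tau$. Hence $\dim\PP(n)\leq 2\dim\QQ(n)$ for $n\gg0$, and $\GK(\PP)\leq\GK(\QQ)\leq1$ by (2).

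For (4), I will show that $\QQ(n)$ is $\SG_n$-stable for $n\gg0$; since $\QQ(n)\ast\Bbbk\SG_n=\PP(n)$, this gives $\QQ(n)=\PP(n)$. Let $n\geq 2$ and take a spanning tree monomial $f\in\QQ(n)$, built by nonsymmetric partial compositions of elements of $\VV$.
\begin{itemize}
\item Some vertex of the tree has only leaves as inputs. Because the compositions are nonsymmetric, that vertex is labelled by some $y\in\VV$ of arity $k\geq2$ occupying consecutive inputs $j,\dots,j+k-1$.
\item By \eqref{E1.1.3}, $f\ast\tau_{j,j+1}$ is the same monomial with $y$ replaced by $y\ast(1\,2)$. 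This lies in $\VV$, because $\VV=\sum_{i=2}^t\PP(i)$ is $\SG$-stable, so $f\ast\tau_{j,j+1}\in\QQ(n)$.
\item For an arbitrary transposition $\tau$, write $\tau=\tau_{j,j+1}\cdot(\tau_{j,j+1}\tau)$ with $\tau_{j,j+1}\tau\in\AG_n$. Triviality of the $\AG_n$-action then gives $f\ast\tau=(f\ast\tau_{j,j+1})\ast(\tau_{j,j+1}\tau)\in\QQ(n)$.
\end{itemize}
Transpositions generate $\SG_n$, so $\QQ(n)$ is $\SG_n$-stable for $n\gg0$. Finally, if $\PP(n)=\QQ(n)$ for $n\geq N$, then $\PP$ is generated as a nonsymmetric operad by the finite-dimensional space $\VV+\sum_{n<N}\PP(n)$.
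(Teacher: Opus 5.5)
Your proposal is correct and follows the paper's proof part by part: a windowed quadratic lower bound in (1); a growth contradiction feeding \cite[Theorem 5.12]{QXZZ20} in (2); the coset decomposition $\SG_n=\AG_n\cup\AG_n\tau$ in (3); and in (4), \eqref{E1.1.3} applied at a top vertex to get an adjacent transposition preserving $\QQ(n)$, followed by triviality of the $\AG_n$-action to reach the remaining permutations. The only real difference is in (2), where you state explicitly the comparison $\sum_{i\le d}\VV^i\subseteq\bigoplus_{n\le d(t-1)+1}\QQ(n)$, which transfers sub-quadratic growth from arity to the generator filtration. The paper's phrase ``replacing $\PP(n)$ by $(\sum_{i=0}^d\VV^i)/(\sum_{i=0}^{d-1}\VV^i)$'' leaves that comparison implicit.
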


\begin{proof} (1) Suppose to the contrary that the assertion fails. 
Then for each $m\geq 1$, there is an integer~$d$ between~$a_1+ma_2 +1
(=:b_1+1)$ and $a_1+(m+1)a_2(=:b_1+a_2)$ such that
$\dim \PP(d)\geq \frac{1}{a_3} d> \frac{1}{a_3} b_1
\geq \frac{a_2}{a_3} m$.  For every $n\gg 0$, let
$M=\lfloor \frac{n-a_1}{a_2} \rfloor-1$. Then $n\geq (M+1)a_2+a_1$.
Thus, for every $n\gg 0$, we have
$$\begin{aligned}
\sum_{i=0}^{n} \dim \PP(i)
&\geq \sum_{m=1}^{M} \left(\sum_{d=a_1+ma_2+1}^{a_1+(m+1)a_2}
\dim \PP(d)\right)
 \geq \sum_{m=1}^{M} \frac{a_2}{a_3} m&\\
&\geq \frac{a_2}{2a_3}M(M+1)\sim \frac{1}{2a_2a_3} n^2.&
\end{aligned}
$$
This contradicts the hypothesis that $\PP$ has sub-quadratic
growth. Therefore the assertion follows. The consequence is clear since $\QQ\subseteq \PP$.

(2)
Note that $\QQ$ (as a suboperad of $\PP$) has sub-quadratic growth.
By replacing $\PP(n)$ by 
$(\sum_{i=0}^d \VV^i)/(\sum_{i=0}^{d-1} \VV^i)$
in the proof of part (1), 
 one obtains that
$\dim \left( (\sum_{i=0}^d \VV^i)/(\sum_{i=0}^{d-1} \VV^i)\right) \leq
\frac{1}{2} d$ for some $d>6$ by taking $a_1=6$ and~$a_3=2$.  The
assertion follows from the fact that $\frac{1}{2} d< d-3$.
The consequence follows from \cite[Theorem 5.12]{QXZZ20}.

(3) If the right $\AG_n$-action on $\QQ(n)$ is trivial, then,
for every~$\sigma\in \SG_n\setminus \AG_n$, we have
\begin{equation}
\label{E2.2.1}\tag{E2.2.1}
\PP(n)=\QQ(n)\ast (\Bbbk   \SG_n)
=\QQ(n)\ast (\Bbbk  \AG_n+\Bbbk   \AG_n \sigma)
=\QQ(n)+\QQ(n)\ast \sigma
\end{equation}
which implies that
$$\dim \QQ(n)\leq \dim \PP(n)\leq 2 \dim \QQ(n).$$
Since the above inequalities hold for $n\gg 0$,
$\GK(\PP)=\GK(\QQ)\leq 1$.

(4) 
Without loss of generality, we can assume that $\PP$ is
generated by an $\SG$-module $\VV$ with arities between 2
and $t$. For every tree monomial $T\in \QQ(n)$ with $n \geq 2$, by \eqref{E1.1.3} we can always find $\sigma_T=(k,k+1)$ such
that $T\ast \sigma_T \in \QQ(n)$. 
Since the right $\AG_n$-action on $\QQ(n)$ is trivial for $n\gg 0$, then for every tree monomial $T\in \QQ(n)$ and every~$\sigma\in \SG_n\setminus \AG_n$, we have
$T\ast \sigma = T\ast \sigma_T \in \QQ(n)$ when $n\gg 0$.
Noting that every element in $\QQ(n)$ is a linear combination
of tree monomials, we know that $\QQ(n)\ast \sigma \subseteq \QQ(n)$ for every~$\sigma\in \SG_n\setminus \AG_n$ and $n\gg 0$.
By \eqref{E2.2.1}, $\PP(n)=\QQ(n)$ when $n\gg 0$.
The consequence is clear.
\end{proof}

Now we recall some conventions from \cite[Section 2]{QXZZ20}:  Let
$\XX$ be a finite operation alphabet and let $\QQ=\TX/\II$,
where~$\TX$ is the free nonsymmetric operad generated by~$\XX$  and $\II$ is an ideal of~$\TX$. 
Denote by~$\Irr(\II)$ the set of tree monomials that are not divisible by the leading monomial of any element in~$\II$. In particular,  ~$\Irr(\II)$ is a $\Bbbk$-linear basis of~$\QQ$ and 
$\QQ$ is generated by $\VV=\Bbbk  \XX$. 
We denote by~$\wt(T)$ the weight of a tree monomial $T$.  
We recall that a tree monomial~$v$ \emph{single-branched} if $v$ is of the form
$$v=x_{i_1}\circ_{j_1}(x_{i_2}\circ_{j_2} (\cdots (x_{i_m}\circ_{j_m}x_{j_{m+1}})\cdots ))$$
with~$x_{i_1},\dots,x_{i_{m+1}}$ lies in~$\XX$;  moreover, we call $x_{j_{m+1}}$ the \emph{top} of~$v$ and $x_{i_1}$ the \emph{bottom} of~$v$; finally, $v$ is called \emph{toward to the rightmost} if~$j_p=\Ar(x_{i_p})$ for every~$1\leq p\leq m$. 

By Lemma \ref{zzlem-new-2.2},  in order to prove Theorem~\ref{zzthm-new-2.1},  it suffices to show that the right~$\AG_n$-action on $\QQ(n)$ is trivial for $n\gg 0$.
Part (1) of the following
proposition is an intermediate step.

\begin{proposition}
\label{zzpro-new-2.3}
Suppose $\PP$ is a finitely generated
symmetric operad that has sub-quadratic growth.
Let $\VV=\Bbbk\XX$ be a generating subspace of $\PP$ and $t$ be the largest arity of nonzero elements in $\VV$. Let $\QQ$ be the nonsymmetric operad generated by~$\VV$ and assume that $\QQ=\TX/\II$. 
\begin{enumerate}
\item[(1)]
For all fixed integers $a_1,a_2>1 $, there is an integer~$b_1\geq a_1$
such that the right $\AG_n$-action on $\QQ(n)$ is trivial
for all $b_1< n\leq b_1+a_2$.
\item[(2)]
There are integers $N,M,w$, only dependent on $\PP$, such that
if $T\in\Irr(\II)$ with $\Ar(T)\geq N$ then  
$T$ has the form
\begin{equation}\label{E2.3.1}\tag{E2.3.1}
T= v_1 \circ_{i_1} ( v \circ_{i_2} (x\circ_j  v_3)), 
\end{equation}
where $v_1, v_3\in \sum_{d=M}^{M+t} \QQ(d)$, $x\in \XX$, $1\leq j\leq \Ar(x)$, $v_2:=v \circ_{i_2} x$  is
single-branched and periodic of period $w$; 
\item[(3)]
In statement (2), we can always replace 
$(M,N)$ by $(M', N+2(M'-M))$ for any $M'>M$.
\end{enumerate}
\end{proposition}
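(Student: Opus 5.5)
For part (1), I would combine Lemma~\ref{zzlem-new-2.2}(1) with Proposition~\ref{zzpro1.4}. Given $a_1,a_2>1$, apply Lemma~\ref{zzlem-new-2.2}(1) with the same $a_1,a_2$ and $a_3=2$, after enlarging $a_1$ so that $a_1\geq 9$. This produces $b_1\geq a_1$ with $\dim\PP(n)<n/2\le n-3<n-2$ for all $b_1<n\leq b_1+a_2$. Here $n\geq 10$, so $n/2\le n-3$ holds. Since $\PP(n)$ is a right $\SG_n$-module with $n\geq 9$ and dimension $<n-2$, Proposition~\ref{zzpro1.4} (in either characteristic case) gives $\PP(n)(1-g)=0$ for all $g\in\AG_n$. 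Restricting to $\QQ(n)\subseteq\PP(n)$ shows that the $\AG_n$-action on $\QQ(n)$ is trivial in that window. Note that $\QQ(n)$ need not be $\SG_n$-stable, but triviality of the action is inherited because each $f\in\QQ(n)$ satisfies $f\ast g=f$ as an element of $\PP(n)$.

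For part (2), I would use the structure theory of irreducible tree monomials of nonsymmetric operads of GK-dimension $\le 1$ from \cite[Section 5]{QXZZ20}. By Lemma~\ref{zzlem-new-2.2}(2), $\GK(\QQ)\le1$. The argument behind \cite[Theorem 5.12]{QXZZ20} (the nonsymmetric gap theorem) shows that the number of irreducible tree monomials of each arity is bounded. It also shows that every sufficiently large $T\in\Irr(\II)$ consists of a long single-branched chain that is eventually periodic, with only boundedly many ``decorations'' at its two ends. The plan is to extract from that analysis three things:
\begin{enumerate}
\item[(a)] a period $w$ and a bound $M$ such that any irreducible monomial of large arity contains a single-branched periodic segment $v_2=v\circ_{i_2}x$ of period $w$;
\item[(b)] the part of $T$ hanging below that segment (the bottom piece $v_1$) has arity in $[M,M+t]$;
\item[(c)] the top piece $v_3$, grafted on the top generator $x$ at slot $j$, also has arity in $[M,M+t]$.
\end{enumerate}
Concretely, I would cut $T$ along its main branch at the first vertex where the accumulated arity from the root reaches $\geq M$, which gives $v_1$. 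I would cut symmetrically from the top, which gives $v_3$. Since each generator has arity at most $t$, each cut overshoots $M$ by less than $t$. The remaining middle segment is single-branched, and periodicity follows from finiteness (pigeonhole on the finitely many local configurations of length $w$ allowed by irreducibility together with bounded growth). Then $N$ is chosen large enough that the middle part is nonempty and lies in the periodic regime, e.g.\ $N\approx 2(M+t)+$ (preperiod length).

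Part (3) is then bookkeeping. Given $M'>M$, I would move the two cut points inward along the periodic single-branched middle segment until each end piece has arity in $[M',M'+t]$. This absorbs at most $M'-M$ additional arity at each end, so the middle stays nonempty and periodic provided $\Ar(T)\geq N+2(M'-M)$. Periodicity is preserved because a subsegment of a periodic chain is periodic with the same $w$, possibly after a cyclic shift of phase.

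The main obstacle is (2): making precise that large irreducible monomials are single-branched and periodic in their middle part, with uniformly bounded end pieces. I would first try to quote \cite[Section 5]{QXZZ20} directly, where this is the core of the nonsymmetric gap theorem. If that is not stated in exactly this form, I would reprove it by showing that a second long branch produces quadratically many irreducible monomials, which contradicts Lemma~\ref{zzlem-new-2.2}(2), and then obtain the period by pigeonhole.
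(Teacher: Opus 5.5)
Your proposal follows the paper's proof. Part (1) is exactly Lemma~\ref{zzlem-new-2.2}(1) with $a_3=2$ (and $b_1\geq 9$) combined with Proposition~\ref{zzpro1.4}; for (2)--(3) the paper likewise quotes the proof of \cite[Lemma 5.11]{QXZZ20} for a decomposition with bounded-weight ends and a single-branched periodic middle, then moves generators of the middle into $v_1$ and $v_3$ until their arities lie in $[M,M+t]$ (with $M=M_1t-t$, $N=N_1t+2M_1t^2$), the only details to make explicit being that $w$ is made independent of $T$ by bounding minimal periods via \cite[Claim 5.6]{QXZZ20} and taking a common multiple (your pigeonhole remark alone would not give this), and that your cut-from-the-root step needs $M$ at least the largest arity of the \cite{QXZZ20} end pieces so the cut falls in the single-branched part.
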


\begin{proof} (1) We can always assume that $b_1\geq 9$.
By Lemma \ref{zzlem-new-2.2}(1), taking $a_3=2$, we have
$\dim \PP(n)<\frac{1}{2} n< n-2$ for all $b_1< n \leq b_1+a_2$.
By Proposition \ref{zzpro1.4}, the right $\AG_n$-action on
$\PP(n)$ is trivial. Therefore, the right $\AG_n$-action on
$\QQ(n)$ is trivial for all $b_1< n\leq b_1+a_2$.

(2) Denote   $n:=\wt(T)$.   By Lemma
\ref{zzlem-new-2.2}(2) and  by the proof of \cite[Lemma 5.11]{QXZZ20},
there are positive integers~$N_1,M_1\geq 2$, such that for every~$n\geq N_1$,
$T$ has a decomposition   $T= v_1^\prime \circ_{j_1}
(v^\prime \circ_{j_2'} (x\circ_{j_2''} v_3^\prime))$,   such that
$\wt(v_1^\prime),\wt(v_3^\prime)\leq M_1$,   $x\in \XX$ and  $v_2^\prime:=v^\prime \circ_{j_2'} x$  
is single-branched and periodic of period $w$. 
Note that, by \cite[Claim 5.6]{QXZZ20}, the minimal period of a single-branched monomial in $\Irr(\II)$ is not bigger than a fixed integer (say $d$) only depending on $\PP$,
then  $w:=(d-1)!$ is a common period of all single-branched monomials in $\Irr(\II)$,  
in particular, $w$ is not dependent on~$T$.  And  we have~$\Ar(v_1^\prime),\Ar(v_3^\prime)\leq M_1t$ and
$\wt(v_2^\prime)\geq n-2M_1$.

Take $N=N_1t+2M_1t^2$ and $M=M_1t-t$. If $\Ar(T)\geq N$, then~$n\geq \frac{1}{t}\Ar(T)\geq N_1+2M_1t$ and thus there
is a decomposition $T= v_1^\prime \circ_{j_1}
(v_2^\prime \circ_{j_2} v_3^\prime)$. If $\Ar(v_3^\prime)<M$,
define a new decomposition of $T$ by moving 
the element on the
top of $v_2^\prime$ to the bottom of $v_3^\prime$, i.e., write
$v_2^\prime =v_2^{\prime\prime}\circ_{j_2^\prime}x$ with
$\wt(x)=1$ and define $v_3^{\prime\prime}
=x\circ_{j_2^{\prime\prime}} v_3^{\prime}$  for some suitable
$j_2^{\prime\prime}$, then $T=v_1^\prime \circ_{j_1}
(v_2^{\prime\prime} \circ_{j_2^\prime} v_3^{\prime\prime})$ and
$\Ar(v_3^\prime)<\Ar(v_3^{\prime\prime})\leq\Ar(v_3^\prime)+t$.
We can continue this procedure until, in the new decomposition,
$M\leq\Ar(v_3^{\prime\prime})\leq M+t$. In a similar way, we
can define $v_1^{\prime\prime}$ such that
$M\leq\Ar(v_1^{\prime\prime})\leq M+t$. So we obtain a
decomposition as required.

(3) It follows from the reasoning of part (2) that we can increase the arities of~$v_1$ and~$v_3$ in the expression~$T= v_1 \circ_{i_1} ( v \circ_{i_2} (x\circ_j  v_3)) $ by moving the bottom of $v_2:=v\circ_{i_2}x $ to $v_1$ and moving the top of $v_2 $ to  $v_3$. 
So the result follows.
\end{proof}

We are now ready to prove the main theorem of this section.

\begin{proof}[Proof of Theorem \ref{zzthm-new-2.1}] 
By Lemma \ref{zzlem-new-2.2}(3), it remains to show that the right
$\AG_n$-action on $\QQ(n)$ is trivial for $n\gg 0$. We write
$\QQ$ as a quotient operad ${\mathcal T}(\XX)/\II$, where $\VV=\Bbbk\XX$
is a subcollection of $\PP$. So there is an
$\SG_n$-action on $\VV(n)$.
Each element in $\QQ(n)$ is a linear combination of tree monomials $T\in \Irr(\II)\subseteq {\mathcal T}(\XX)$. We use the monomial order on ${\mathcal T}(\XX)$ defined by path
sequences, see \cite[Section 3]{QXZZ20}. 

We claim that $T\ast \sigma=T$ for each tree monomial $T$
considered as an element in~$\QQ(n)$ and~$\sigma\in \AG_n$ with $n\gg 0$. 

Let $T=v_1 \circ_{i_1} ( v \circ_{i_2} (x\circ_j  v_3))$ be as in~\eqref{E2.3.1} and let~$a_1=M$, $a_2=(w+2)t$. By Proposition \ref{zzpro-new-2.3}(1),
there is an integer~$b_1\geq a_1=M$ such that the right $\AG_n$-action on
$\QQ(n)$ is trivial for all~$b_1<n\leq b_1+a_2$. By Proposition
\ref{zzpro-new-2.3}(3), if~$M$ is smaller than $b_1$, we can replace~$(M,N)$ by $(b_1, N+2(b_1-M))$. Therefore, we can further assume that~$M=b_1$ and thus the right $\AG_n$-action on $\QQ(n)$ is trivial for all~$M< n \leq M+(w+2)t$.

Recall that
$\AG_n$ is generated by 3-cycles
$\{\sigma_k:=(k,k+1,k+2)\}_{k=1}^{n-2}$. So it suffices to prove the claim for~$\sigma=\sigma_k$, $1\leq k\leq n-2$.

\textbf{Case 1}:
Suppose~$T=v_1\circ_{\Ar(v_1)}(v\circ_{\Ar(v)} (x\circ_{j}v_3))$,  where $x\in \XX$, $1\leq j\leq \Ar(x)$, and~$v_2=v\circ_{\Ar(v)} x$ is
toward to the rightmost.
Then the action of $\sigma_k$ will not involve~$v_3$ and~$v_1$  simultaneously.   
Thus every $\sigma_k$ acts on a sub-tree monomial of~$T$ either involving~$v_1$ (or $v_3$) and a sub-tree monomial of $v_2$ with weight~$\leq 2$, or involving only a sub-tree monomial of $v_2$ with weight~$\leq 3$. It follows that $\sigma_k$ acts on a sub-tree monomial of $T$ which
has arity between~$M$ and~$M+3t\ (\leq M+(w+2)t)$. So we have $T\ast \sigma_k=T$. 

\textbf{Case 2}:
Suppose $T=v_1 \circ_{i_1} ( v \circ_{i_2} (x\circ_j  v_3))$ is not of the form described in Case~1. 
Since~$v\circ_{i_2}x$ is single-branched, we may assume that $v_1\circ_{i_1}(v\circ_{i_2}x)= (v_1\circ_{i_1} u)\circ_{i_1'} v_2'$  for some sub-tree monomial $u$ of $v\circ_{i_2}x$
such that $v_1'':=v_1\circ_{i_1} u$ is of the smallest possible weight satisfying~$i_1'\neq \Ar(v_1'')$. Since~$v\circ_{i_2}x$ is single-branched with large enough weight  and periodic with period~$w$,  we deduce that~$\wt(v_1)\leq \wt(v_1'')\leq \wt(v_1)+(w+1)$ by the choice of~$v_1''$.   It follows that
$$T= v_1 \circ_{i_1} ( v \circ_{i_2} (x\circ_j  v_3)) 
=v_1''\circ_{i_1'}(v_2'\circ_{j_1} v_3).$$ 
In this case, we have~$\Ar(v_1)\leq \Ar(v_1'')\leq \Ar(v_1)+(w+1)t\leq M+(w+2)t$. Let~$m=\Ar(v_1'')\neq i_1'$. Then by construction, the right $\AG_m$-action on
$v_1''$ is trivial. Let~$\phi\in \AG_m$ such
that $\phi(i_1')=m$. Since $v_1''=v_1''\ast \phi$, we have
$$T=(v_1''\ast \phi)\circ_{i_1'}(v_2'\circ_{j_1} v_3)
=[v_1''\circ_{\phi(i_1')}(v_2'\circ_{j_1} v_3)]\ast \phi''
=[v_1''\circ_{m}(v_2'\circ_{j_1} v_3)]\ast \phi''$$
for some $\phi''\in \SG_n$, where the second equation
follows from \eqref{E1.1.4}.
As an element in~$\QQ$, $T':=v_1''\circ_{m}(v_2'\circ_{j_1} v_3)$ is
smaller than $T$ since $i_1'< m$.  Rewrite~$T'$ as a linear combination of smaller elements $T_i''$ of the form as in Proposition~\ref{zzpro-new-2.3}(2).
If some $T_i''$ is still not of the form in Case 1, we continue the above two processes, which must finally terminate because the monomial order is a well-order. If~$T''$ is of the form in Case~1, then we have~$T''\ast \sigma=T''$ for every~$\sigma\in \AG_n$.  

By the above reasoning, we may assume that~$T=T''\ast \tau$ for some element~$\tau\in \SG_n$ such that~$T''\ast \sigma=T''$ for every~$\sigma\in \AG_n$.  
Then
$$T\ast \sigma =(T''\ast \tau)\ast \sigma
=[T''\ast(\tau\circ \sigma\circ \tau^{-1})]\ast
\tau=T''\ast \tau=T,$$
which proves the claim.  The proof of Theorem \ref{zzthm-new-2.1} is completed.
\end{proof}

In fact we have proved the following.

\begin{theorem}
\label{zzthm-new-2.4}
Let $\PP$  be a finitely generated (symmetric) operad.
\begin{enumerate}
\item[(1)]
If~$\PP$ has sub-quadratic growth, then
$\GK(\PP)\leq 1$ and $\PP$ is almost $\AG\triv$.
\item[(2)]
If $\GK(\PP)=1$, then $\PP$ is almost $\AG\triv$.
\end{enumerate}
\end{theorem}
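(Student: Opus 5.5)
The plan is to read Theorem \ref{zzthm-new-2.4} off the proof of Theorem \ref{zzthm-new-2.1}. I take ``almost $\AG\triv$'' to mean that the right $\AG_n$-action on $\PP(n)$ is trivial for all $n\gg 0$; after the reduction of \cite[Lemma 1.4]{LQXZ25} this is the same as saying $\PP$ is almost isomorphic to an operad whose components carry trivial alternating actions.

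For (1), the argument just given shows that every tree monomial $T\in\Irr(\II)$ with $\Ar(T)=n\gg 0$ satisfies $T\ast\sigma=T$ for all $\sigma\in\AG_n$. Hence the right $\AG_n$-action on $\QQ(n)$ is trivial for $n\gg0$. Lemma \ref{zzlem-new-2.2}(3) then gives $\GK(\PP)\le 1$, and Lemma \ref{zzlem-new-2.2}(4) gives $\PP(n)=\QQ(n)$ for $n\gg 0$. Consequently the $\AG_n$-action on $\PP(n)$ is trivial for $n\gg0$. The reduction $\PP\mapsto\PP_{\{w\}}$ changes only finitely many components, so this conclusion transfers back to the original $\PP$, which is therefore almost $\AG\triv$.

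For (2), if $\GK(\PP)=1$, I want to show $\PP$ has sub-quadratic growth and then apply (1). The definition of GK-dimension gives, for every $\epsilon>0$, the bound $\sum_{i\le n}\dim\PP(i)\le n^{1+\epsilon}$ for $n\gg0$. Taking $\epsilon=1/2$ yields $\sum_{i\le n}\dim\PP(i)/n^2\to 0$. So $\PP$ has sub-quadratic growth, and part (1) applies.

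The only point needing care is the transfer of $\AG$-triviality from $\QQ$ to $\PP$ in (1). It relies on Lemma \ref{zzlem-new-2.2}(4), which is already established, so I expect no real obstacle. The substantive work was done in the proof of Theorem \ref{zzthm-new-2.1}.
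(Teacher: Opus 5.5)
Your proposal is correct and follows essentially the same route as the paper. Both arguments read off the $\AG_n$-triviality of $\QQ(n)$ for $n\gg 0$ from the Claim in the proof of Theorem \ref{zzthm-new-2.1}, and both get (2) from (1) because $\GK(\PP)=1$ forces sub-quadratic growth. The one difference is how you pass from $\QQ$ to $\PP$: you invoke Lemma \ref{zzlem-new-2.2}(4), i.e. $\PP(n)=\QQ(n)$ for $n\gg0$, whereas the paper uses $(f\ast\sigma)\ast\phi=(f\ast(\sigma\phi\sigma^{-1}))\ast\sigma=f\ast\sigma$ together with \eqref{E2.2.1}; both rest on the normality of $\AG_n$ in $\SG_n$.
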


\begin{proof} (1) By Theorem~\ref{zzthm-new-2.1}, $\GK(\PP)\leq 1$.  Moreover, for all~$f\in \QQ(n)$, $\sigma\in \SG_n\setminus \AG_n$ and~$\phi\in \AG_n$ with $n\gg 0$,  by {\bf Claim 1} in the proof of
Theorem \ref{zzthm-new-2.1}, we have
 $$
(f\ast \sigma)\ast \phi= f\ast (\sigma\circ \phi\circ \sigma^{-1}) \ast \sigma=f\ast \sigma.
$$
By~\eqref{E2.2.1},  $\PP$ is almost $\AG\triv$.

(2) This is an immediate consequence of part (1).
\end{proof}

\begin{proof}[Proof of Theorem \ref{zzthm0.5}]
By Theorem~\ref{zzthm-new-2.4}(2) and~\cite[Corollary 0.4]{LQXZ25},  the result follows.
\end{proof}

\section{Saturation}
\label{zzsec-sat}
The aim of this section is to introduce the notion of saturated operads and establish some properties on certain saturated PGC algebras (Definition~\ref{zzdef-new-3.5}).
We first recall the definition of torsion elements in a graded
algebra and recall some results from~\cite{LQXZ25}.

Let~$A:=\oplus_{i=0}^{\infty} A_i$ be a locally finite
${\mathbb N}$-graded algebra. An element $x\in A$ is called
{\it right torsion} if $x A_{\geq n}=0$ for some $n\gg 0$
\cite[p. 233]{AZ94}. Similarly we can define a {\it left torsion}
element. The set of all right (resp. left) torsion elements of
$A$ is denoted by~$\tau^r(A)$ (resp. $\tau^l(A)$).
The left and right torsions of operads can be defined similarly,
see below.

\begin{definition}[{\cite[Definition 2.1]{LQXZ25}}]
Let $\PP$ be an operad and denote
$\oplus_{i\geq n}\PP(i)$ by~$\PP_{\geq n}$.
\begin{enumerate}
\item[(1)]
The {\it left torsion ideal} of $\PP$ is defined to be
$$\tau^l(\PP)=\{ x\in \PP\mid \PP_{\geq n} \circ (\Bbbk x)=0,
{\text{ for $n\gg 0$}}\}.$$
If $\tau^l(\PP)=0$, then $\PP$ is called {\it left
torsionfree}.
\item[(2)]
 The {\it right torsion ideal} of $\PP$ is defined to be
$$\tau^r(\PP)=\{ x\in \PP\mid (\Bbbk x)\circ \PP_{\geq n}=0,
{\text{ for $n\gg 0$}}\}.$$
If $\tau^r(\PP)=0$, then $\PP$ is called {\it
right torsionfree}.
\end{enumerate}
\end{definition}

\begin{lemma}
\label{zzlem-new-3.2}
Let $\PP$ be a locally finite operad.
\begin{enumerate}
\item[(1)]\cite[Lemma 1.4(4)]{LQXZ25}
$\PP_{\{w\}}$ is left noetherian if and only if so is $\PP$.
\item[(2)]\cite[Lemma 2.2(1)]{LQXZ25}
$\tau^l(\PP)$ is a 2-sided ideal of $\PP$ and $\tau^r(\PP)$ is
a right ideal of $\PP$.
\item[(3)]\cite[Lemma 2.2(4)]{LQXZ25}
Every finite dimensional left ideal of~$\PP$ is a subspace of
$\tau^l(\PP)$.
\item[(4)]\cite[Lemma 2.2(6)]{LQXZ25}
If $\PP$ is left noetherian, then $\tau^l(\PP)$ is finite
dimensional.
\item[(5)]\cite[Lemma 2.5(1)]{LQXZ25}
Suppose $\PP$ is semiprime. Then $\tau^l(\PP)\subseteq \PP(1)$.
As a consequence, if $\PP$ is prime and infinite dimensional,
then $\tau^l(\PP)=0$.
\end{enumerate}
\end{lemma}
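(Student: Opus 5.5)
Lemma~\ref{zzlem-new-3.2} collects results from \cite{LQXZ25}, and I would reprove each part directly from the operad axioms \eqref{E1.1.1}--\eqref{E1.1.4}, using local finiteness throughout.

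Part (2) is a direct check with the axioms. If $\PP_{\geq n}\circ(\Bbbk x)=0$, then for $p\in\PP$ the composites $p\circ_i x$ and $x\circ_i p$ are again killed by $\PP_{\geq n'}$ for suitable $n'$. For $p\circ_i x$ I would rewrite $\PP_{\geq n}\circ(p\circ_i x)$ as $(\PP_{\geq n}\circ p)\circ x$ via \eqref{E1.1.1}; this lands in $\PP_{\geq n}\circ x=0$. For $x\circ_i p$ I would use \eqref{E1.1.1}, and \eqref{E1.1.3}, \eqref{E1.1.4} handle the $\SG$-action. The right torsion case is the mirror argument, but only closure under right composition survives.

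For (3): let $I$ be a finite dimensional left ideal, so it sits in arities at most some $m$. For $n>m$, $\PP_{\geq n}\circ I$ lies in arities $>m$ and also inside $I$, hence is $0$. So $I\subseteq\tau^l(\PP)$.

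For (4): $\tau^l(\PP)$ is a left ideal by (2), so left noetherianity gives generators $x_1,\dots,x_k$. Choose $N$ with $\PP_{\geq N}\circ x_j=0$ for all $j$. Every element of $\tau^l(\PP)$ is a sum of $\SG$-translates of $p\circ_i x_j$. When $\Ar(p)\geq N$ this term vanishes, so only $p$ of arity $<N$ contribute. Hence $\tau^l(\PP)$ lives in bounded arity and is finite dimensional by local finiteness.

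For (1): $\PP/\PP_{\{w\}}$ is finite dimensional, since it sits in arities $1,\dots,w-1$.
\begin{enumerate}
\item[$\Leftarrow$] Given a chain of left ideals $I_i$ of $\PP$, the sets $I_i\cap\PP_{\geq w}$ are left ideals of $\PP_{\{w\}}$ and stabilize. The low-arity parts $I_i\cap\bigoplus_{n<w}\PP(n)$ also stabilize by finite dimensionality, so the $I_i$ stabilize.
\item[$\Rightarrow$] Given a chain $J_i$ of left ideals of $\PP_{\{w\}}$, the ideals $\PP\circ J_i$ stabilize. By \eqref{E1.1.1},
$$\PP_{\geq w}\circ(\PP\circ J_i)\subseteq\PP_{\geq w}\circ J_i\subseteq J_i\subseteq\PP\circ J_i .$$
Since the stable ideal is finitely generated in bounded arity, the gap between the two ends lies in bounded arity. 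It is therefore finite dimensional, and the $J_i$ stabilize as well.
\end{enumerate}
The main obstacle here is keeping track of the $\SG$-submodule generation; I expect it to be routine.

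Part (5) is the substantive one. Let $Q$ be a prime ideal and $\bar\PP=\PP/Q$. In $\bar\PP$, let $J$ be the ideal generated by $\bar\PP_{\geq n}$ and let $T$ be the image of $\tau^l(\PP)$, an ideal by (2). For $n$ large, $J\circ T=0$, since every element of $J$ has arity $\geq n$.
\begin{enumerate}
\item[Case $T=0$.] Then $\tau^l(\PP)\subseteq Q$.
\item[Case $J=0$.] Then $\bar\PP$ vanishes in arities $\geq n$. For any $x$ of arity $\geq2$, iterated compositions in the ideal generated by $x$ raise arity by at least $1$ each time. Hence that ideal is nilpotent, and primeness forces $\bar x=0$.
\end{enumerate}
In both cases $\tau^l(\PP)\cap\PP_{\geq2}$ lies in every prime ideal, so semiprimeness gives $\tau^l(\PP)\subseteq\PP(1)$. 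For the consequence, take $\PP$ prime and infinite dimensional. Then $J\neq0$ in $\PP$ itself, while $J\circ\tau^l(\PP)=0$, so primeness gives $\tau^l(\PP)=0$.
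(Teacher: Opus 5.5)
The paper gives no argument for this lemma; each item is simply quoted from \cite{LQXZ25}. Your proposal is therefore a self-contained replacement for those citations, not a reconstruction of a proof in the text.

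Parts (1)--(4) are sound.
\begin{itemize}
\item Part (2) follows from \eqref{E1.1.1} and \eqref{E1.1.3}--\eqref{E1.1.4}, as you say.
\item Parts (3) and (4) are arity bounds combined with local finiteness.
\item In (1), the sandwich $\PP_{\geq w}\circ L\subseteq J_i\subseteq L$ does the job. Here $L$ is the stable value of $\PP\circ J_i$, and $L/(\PP_{\geq w}\circ L)$ is concentrated in arities below $w$ plus the maximal arity of a finite generating set of $L$.
\end{itemize}
Two cosmetic points. Your $\Leftarrow$/$\Rightarrow$ labels in (1) are swapped relative to the statement. Also, closure of $\tau^r(\PP)$ under $x\mapsto x\circ_i q$ is not a literal mirror of the $\tau^l$ computation: for inputs of $x\circ_i q$ not coming from $q$ you need \eqref{E1.1.2} rather than \eqref{E1.1.1}.

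The one step that fails as written is in (5): ``for $n$ large, $J\circ T=0$.'' An element $x\in\tau^l(\PP)$ only comes with its own threshold $n_x$. Nothing says $\tau^l(\PP)$ is finite dimensional; that is essentially what (5) asserts. So there need not be a single $n$ with $\PP_{\geq n}\circ\tau^l(\PP)=0$.

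The repair is easy.
\begin{itemize}
\item For each $n$, let $\tau^l_n(\PP)=\{x\mid \PP_{\geq n}\circ(\Bbbk x)=0\}$. Your computation in (2) shows verbatim that $\tau^l_n(\PP)$ is a two-sided ideal, and $\tau^l(\PP)=\bigcup_n\tau^l_n(\PP)$.
\item Run your dichotomy in $\PP/Q$ with $T$ the image of $\tau^l_n(\PP)$ and $J=(\PP/Q)_{\geq n}$. This gives $\tau^l_n(\PP)\cap\PP_{\geq 2}\subseteq Q$ for every prime $Q$ and every $n$, hence $\tau^l(\PP)\cap\PP_{\geq 2}=0$.
\item For the consequence, either run the same argument for each $n$ (using $\PP_{\geq n}\neq 0$), or note that a prime operad is semiprime. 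Then $\tau^l(\PP)\subseteq\PP(1)$ is finite dimensional, and a uniform $n$ exists.
\end{itemize}
Finally, the nilpotence argument in your case $J=0$ tacitly uses that $I\circ I'$ is again an ideal whenever $I,I'$ are. This holds by \eqref{E1.1.1}--\eqref{E1.1.2} and deserves a line.
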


 Now we are ready to introduce the concept of a saturated operad. Recall that operads are always assumed to be locally finite in this paper.

\begin{definition}
\label{zzdef-new-3.3}
Let $\PP$ and $\PP'$ be left torsionfree operads.
\begin{enumerate}
\item[(1)]
An operad $\PP$ is called {\it saturated} if
for every left torsionfree operad $\QQ$ with isomorphism
$\phi:\QQ_{\{w\}}\to \PP_{\{w\}}$ for some $w> 0$, the map
$\phi$ extends to an injective (operadic) homomorphism $\phi': \QQ \to \PP$.
\item[(2)]
A  homomorphism $f: \PP\to \PP'$ is called a {\it saturation} of $\PP$ if
\begin{enumerate}
\item[(2a)]
$f|_{\PP_{\{w\}}}:
\PP_{\{w\}}\to \PP'_{\{w\}}$ is an isomorphism for some $w\geq 2$, and
\item[(2b)]
$\PP'$ is saturated.
\end{enumerate}
Sometimes we simply call $\PP'$ a saturation of $\PP$
and write $\PP'=\overline{\PP}$.
\end{enumerate}
\end{definition}

The concepts of ``saturated'' and ``saturation'' for left torsionfree
$\mathbb{N}$-graded algebras are defined analogously.
It is easy to see that saturation of $\PP$ is unique
up to  isomorphism. We will show that $\Com^{\{w\}}$
\eqref{E0.3.1} and $\Ope^{\{2w\}}$ \eqref{E0.3.2} are
saturated in Corollary \ref{zzcor-new3-5.3}.

\begin{proposition}
\label{zzpro-new-3.4}
Let $\PP_1$ and $\PP_2$ be left
noetherian operads.
\begin{enumerate}
\item[(1)]
If $\PP_1$ and $\PP_2$ are saturated operads, then so is
$\PP_1\oplus \PP_2$.
\item[(2)]
If $\QQ_1\to \PP_1$ and $\QQ_2\to \PP_2$ are saturations, then
so is $\QQ_1\oplus \QQ_2\to \PP_1\oplus \PP_2$.
\end{enumerate}
\end{proposition}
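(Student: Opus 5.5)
The plan is to work directly with Definition \ref{zzdef-new-3.3}, using the fact that left torsionfreeness and truncation behave well under direct sums. First, $\tau^l(\PP_1\oplus\PP_2)=\tau^l(\PP_1)\oplus\tau^l(\PP_2)$. This holds because compositions between the two summands vanish in $\PP_1\oplus\PP_2$ (in the non-unital sense; the identity is $1_{\PP_1}+1_{\PP_2}$, or one works with the reduced convention used in the paper). Second, $(\PP_1\oplus\PP_2)_{\{w\}}$ agrees with $(\PP_1)_{\{w\}}\oplus(\PP_2)_{\{w\}}$ in arities $\geq w$. So the direct sum is again left torsionfree.

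For (1), let $\QQ$ be left torsionfree with an isomorphism $\phi:\QQ_{\{w\}}\to(\PP_1\oplus\PP_2)_{\{w\}}$. The key step is to split $\QQ$ as $\QQ_1\oplus\QQ_2$ compatibly with $\phi$. Let $e_k$ be the projection onto $\PP_k$. Set $I_k=\phi^{-1}((\PP_k)_{\geq w})$; these are ideals of $\QQ_{\{w\}}$ with $I_1\circ I_2=0=I_2\circ I_1$. Next, define $\QQ_k$ inside $\QQ$ as the set of $x\in\QQ$ with $I_{3-k}\circ x=0$ and $x\circ I_{3-k}=0$ in high arity. Left torsionfreeness of $\QQ$ should force $\QQ_1\cap\QQ_2=0$: an element killed on the left by both $I_1$ and $I_2$ is killed by $\QQ_{\geq w}$, hence lies in $\tau^l(\QQ)=0$.

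For $\QQ=\QQ_1+\QQ_2$, I would take $x\in\QQ(n)$ and decompose $x$ using that $\QQ(n)\to\prod_{m}\operatorname{Hom}(\QQ(m),\QQ(m+n-1))$, $y\mapsto y\circ_i x$, is injective on high arity. This injectivity is again left torsionfreeness. The pieces $e_k\circ x$ are defined through $\phi$: for $y\in\QQ_{\geq w}$, set $y\circ_i x_k:=\phi^{-1}(e_k\phi(y\circ_i x))$. One then has to check that these define genuine elements of $\QQ$. This is exactly where the left noetherian hypothesis enters: via Lemma \ref{zzlem-new-3.2}(1),(4) it keeps the relevant module finitely generated, so that the element can be recovered in $\QQ$. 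This step is the main obstacle. If the direct construction fails, I would instead pass to the saturations. Concretely, I would apply saturatedness of $\PP_k$ to the operad $\QQ_k:=$ the image of $\QQ$ in $\QQ/\operatorname{ann}^l(I_{3-k})$. This quotient is left torsionfree and has $(\QQ_k)_{\{w\}}\cong(\PP_k)_{\{w\}}$. Saturatedness then gives injections $\phi'_k:\QQ_k\to\PP_k$. Set $\phi'=(\phi'_1\circ\pi_1,\phi'_2\circ\pi_2):\QQ\to\PP_1\oplus\PP_2$. It is an operad map extending $\phi$, and its kernel is $\operatorname{ann}^l(I_1)\cap\operatorname{ann}^l(I_2)$. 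That kernel is contained in $\tau^l(\QQ)=0$, so $\phi'$ is injective. This second route seems cleaner, and I would commit to it; it requires checking that each quotient is left torsionfree, which again uses that $I_k$ acts faithfully.

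For (2), the isomorphisms $(\QQ_k)_{\{w_k\}}\to(\PP_k)_{\{w_k\}}$ combine, with $w=\max(w_1,w_2)$, into an isomorphism $(\QQ_1\oplus\QQ_2)_{\{w\}}\to(\PP_1\oplus\PP_2)_{\{w\}}$, since truncation commutes with direct sums. By (1), $\PP_1\oplus\PP_2$ is saturated. Hence $\QQ_1\oplus\QQ_2\to\PP_1\oplus\PP_2$ is a saturation, once we note that $\QQ_1\oplus\QQ_2$ is left torsionfree by the first paragraph.
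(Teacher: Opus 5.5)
Your committed second route is correct, and it is genuinely different from the paper's proof. The paper first uses left noetherianity of $\QQ$ (via Lemma \ref{zzlem-new-3.2}(1)) to pick ideals $I$, $J$ of $\QQ$ that are maximal among ideals agreeing with $(\PP_1)_{\geq w'}$, resp.\ $(\PP_2)_{\geq w'}$, in high arity. It then gets $I\cap J=0$ from finite-dimensionality and Lemma \ref{zzlem-new-3.2}(3). It gets left torsionfreeness of $\QQ/I$ from maximality combined with Lemma \ref{zzlem-new-3.2}(4), which says $\tau^l$ is finite dimensional for noetherian operads. You replace these maximal ideals by explicit annihilators. These are in fact the largest members of the paper's families $\Phi_2$ and $\Phi_1$, so they exist without any chain condition. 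Left torsionfreeness of the quotients then follows directly from that of $\QQ$. Your argument therefore proves (1), and hence (2), without the left noetherian hypothesis. The paper's route is shorter given the lemmas imported from \cite{LQXZ25}; yours is more elementary and more general. Part (2) is handled identically in both.

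Three points need to be written out or corrected.

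First, there is an index slip. Take $\operatorname{ann}^l(I)=\{x\in\QQ : I\circ x=0\}$; this is the reading under which your kernel lies in $\tau^l(\QQ)$. Then $\operatorname{ann}^l(I_{3-k})\cap\QQ_{\geq w}=I_k$, so $\QQ/\operatorname{ann}^l(I_{3-k})$ is almost isomorphic to $\PP_{3-k}$, not $\PP_k$. You want $\QQ_k:=\QQ/\operatorname{ann}^l(I_k)$. Your kernel formula is unaffected.

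Second, you need $\operatorname{ann}^l(I_k)$ to be an ideal of $\QQ$. Closure under $q\circ_j x$ uses $y\circ_i(q\circ_j x)=(y\circ_i q)\circ_{i-1+j}x$, so it requires $I_k\circ\QQ\subseteq I_k$. Knowing only that $I_k$ is an ideal of $\QQ_{\{w\}}$ is not enough. This follows from left torsionfreeness. For $y\in I_1$ and $q\in\QQ$, write $y\circ_j q=c_1+c_2$ with $c_k\in I_k$. Then $I_2\circ c_2=0$ because $I_2\circ y=0$, and $I_1\circ c_2=0$ trivially, so $c_2\in\tau^l(\QQ)=0$. The case $q\circ_j y$ then follows the same way using the first case. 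The paper tacitly needs the same fact to know its families $\Phi_i$ are nonempty.

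Third, the left torsionfreeness of the quotient should be checked explicitly. Suppose $\QQ_{\geq n}\circ x\subseteq\operatorname{ann}^l(I_1)$ with $n\geq w$. For $z\in I_1$, the element $u=z\circ_j x$ lies in $I_1$. For every $v\in\QQ_{\geq n}$ it satisfies
\[
v\circ_k u=(v\circ_k z)\circ_{k-1+j}x\in\operatorname{ann}^l(I_1)\cap I_1 .
\]
This intersection is $0$, since an element of $I_1$ killed by $I_1$ is killed by $I_1+I_2=\QQ_{\geq w}$. Hence $u\in\tau^l(\QQ)=0$, so $x\in\operatorname{ann}^l(I_1)$.
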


\begin{proof} (1) First of all it is easy to see that $\PP'':=
\PP_1\oplus \PP_2$ is left torsionfree. Secondly it is
easy to check that $\PP''$ is left noetherian. It
remains to show the condition in Definition \ref{zzdef-new-3.3}(1).
Let $\QQ$ be a left torsionfree operad such that $\QQ_{\{w\}}
=\PP''_{\{w\}}$ for some $w$.  By Lemma \ref{zzlem-new-3.2}(1), $\QQ_{\{w\}}$ is left noetherian and so is $\QQ$.
Let $i$ be 1 or 2 and let 
$\Phi_i$ be the set of two-sided ideals $L$ of $\QQ$ such that $L_{\geq w'}=(\PP_i)_{\geq w'}$ for some $w'\geq w$.
Since $\QQ$ is noetherian, $\Phi_i$ contains a maximal element.
Let $I$ be a maximal ideal in $\Phi_1$ and $J$ be a maximal element in $\Phi_2$. Then we have~$(I\cap J)_{\geq m}=0$ for all integers~$m \gg 0$.
Consequently, $I\cap J$
is finite dimensional. Since $\QQ$ is left torsionfree, by Lemma \ref{zzlem-new-3.2}(3), we obtain
$I\cap J=0$.  Consider the
exact sequence
$$0\to I \to \QQ \to \QQ/I\to 0.$$
Since $I$ is maximal with respect to the described property,
$\QQ/I$ has no finite dimensional nontrivial ideals. Since~$\QQ$ is  left noetherian, so is $\QQ/I$. By Lemma
\ref{zzlem-new-3.2}(4), $\QQ/I$ is left torsionfree. By the
definition of $I$, $(\QQ/I)_{\geq w'}=(\PP_2)_{\geq w'}$.
Since~$\PP_2$ is saturated, there is an injective operadic
homomorphism $f: \QQ/I\to \PP_2$. By symmetry, there is
an injective homomorphism $g: \QQ/J\to \PP_1$. Since
$I\cap J=0$, we obtain an injective homomorphism
 $$\QQ\cong \QQ/(I\cap J)
\to \QQ/J\oplus \QQ/I \to \PP_1\oplus \PP_2=\PP''$$
as desired.

(2) This follows from part (1).
\end{proof}

For the rest of this section, we study some basic
results about saturation in the setting of graded algebras. These results will be useful in the sequel when we study saturated operads of GK-dimension 1. 

We refer to \cite[Definitions 3.5 and 4.6]{LQXZ25} for the
definitions of {\it graded Perm} (or {\it GPerm} for short) and
{\it pseudo-graded-Perm} (or {\it PGPerm} for short) unital
associative algebras. And in this article, we mainly use a
special class of PGPerm algebras defined as follows.

\begin{definition}[{\cite[Definition 5.4]{LQXZ25}}]
\label{zzdef-new-3.5}
An ${\mathbb N}$-graded associative algebra $A=
\oplus_{i=0}^{\infty} A_i$ is called a {\it pseudo-graded commutative algebra}
(or {\it PGC algebra} for short) if it satisfies the following
conditions:
\begin{enumerate}
\item[(i)]
For each $i\geq 1$, $A_i=A_{i,e}\oplus A_{i,o}$.
Elements in $A_{i,e}$ {\rm{(}}resp. $A_{i,o}$ or $A_0${\rm{)}}
are called {\it homogeneous of type} $(i,e)$ (resp. $(i,o)$ or $(0)${\rm{)}}.
Define
$$t(x)=\begin{cases} 0 & x\in A_{0}, \\
0 & x\in A_{i,e}, \\
1 & x\in A_{i,o}.
\end{cases}$$
\item[(ii)]
$A_{o}:=\oplus_{i\geq 1} A_{i,o}$ and
$A_{e}:=\oplus_{i\geq 1} A_{i,e}$ are two-sided ideals of $A$. As a consequence,  we have $A_{o}A_e=A_eA_o=0$.
\item[(iii)] For all elements $y,z\in A_0\cup A_{o}\cup A_{e}$,
\begin{equation}
\label{E3.5.1}\tag{E3.5.1}
y  z=(-1)^{\deg(y)\deg(z)t(y)t(z)} z  y.
\end{equation}
\end{enumerate}
\end{definition}

By the comment after \cite[Definition 5.4]{LQXZ25}, every PGC
algebra is a PGPerm algebra. A PGC algebra $A$ is said to be of
{\it even type} (resp. {\it odd type}) if $A_{o}=0$ (resp.
$A_{e}=0$). It is clear that a PGC algebra of even type is
a commutative ${\mathbb N}$-graded algebra. By \cite[Lemma 5.5]{LQXZ25}, a
PGC algebra of odd type is equivalent to a graded commutative
algebra since in this case \eqref{E3.5.1} becomes
$$yz=(-1)^{\deg(y)\deg(z)}zy.$$
By convention, all PGC algebras of odd type are assumed to be over a field of characteristic $\neq 2$.
Note that a PGC algebra homomorphism~$\varphi:A\longrightarrow B$ is an $\mathbb{N}$-graded algebra homomorphism such that~$\varphi(A_{i,o})\subseteq B_{i,o}$ and~$\varphi(A_{i,e})\subseteq B_{i,e}$ for all~$i\geq 1$. 

Now we construct a commutative ${\mathbb N}$-graded algebra, which will soon be proved to be saturated.
Recall that for every integer~$b\in \mathbb{N}$,  an arbitrary ${\mathbb Z}/(b+1)$-graded algebra~$B$
is called \emph{nilpotent} if $\oplus_{i=1}^b B_i$
is a nilpotent ideal of $B$.
\begin{example}
\label{zzex-new-3.6}
Let $(B, \star)$ be a  commutative ${\mathbb Z}/(b+1)$-graded algebra
with Hilbert series $1+t+\cdots+t^b$ for some integer
$b\geq 0$. We define a connected commutative ${\mathbb N}$-graded algebra
$(B\{c\}, \cdot)$ with $\deg c=b+1$ as follows.

For $0\leq i\leq
b$, we pick a nonzero basis element $x_i$ of $B_i$ with $x_0=1$
and defined~$B\{c\}_i=\Bbbk x_i$. For $i\geq b+1$, let
$\bar{i}$ denote an integer between $0$ and $b$ such that
$i=\bar{i} \mod (b+1)$. In this case let $q_i=\frac{i-\bar{i}}{b+1}$.
Define~$B\{c\}_i=\Bbbk x_{\bar{i}} c^{q_i}$.
By construction $\{x_i c^j\mid 0\leq i\leq b, j\geq 0\}$ is a $\Bbbk$-linear
basis of $B\{c\}$. The commutative associative multiplication
$\cdot$ of $B\{c\}$ is defined to be
\begin{equation}\label{ex3.6eq}\tag{E3.6.1}
 x_{i_1} c^{j_1} \cdot x_{i_2} c^{j_2}:=\begin{cases}
(x_{i_1}\star x_{i_2}) c^{j_1+j_2}, & i_1+i_2< b+1,\\
(x_{i_1}\star x_{i_2}) c^{j_1+j_2+1}, & i_1+i_2\geq b+1,
\end{cases}
\end{equation}
where~$0\leq i_1,i_2\leq b$.
When $B$ is nilpotent,  $c$ is a 
nonzerodivisor in
$B\{c\}$ of the smallest positive  degree. Note that~$B\{ c\}$ is a PGC algebra of even type and we frequently omit the multiplication symbol~$\cdot$.   
\end{example}

A graded commutative 
algebra can be constructed analogously.
 \begin{example}\label{zzrema-new-3.7}
 Let ~$\Bbbk$ be a field with ${\text{char}}\; \Bbbk\neq 2$. Let
 $(B, \star)$ be a (${\mathbb Z}/(b+1)$-graded) graded  commutative algebra over $\Bbbk$ with Hilbert series $1+t+\cdots+t^b$ for some odd integer~$b\geq 1$. Let $(B\{c\}, \cdot)$ be the algebra defined by~\eqref{ex3.6eq}. Then $(B\{c\}, \cdot)$ is a graded commutative algebra with $\deg c=b+1$. Note that $(B\{c\}, \cdot)$ is a PGC algebra of odd type. 
\end{example}

A left torsionfree PGC algebra $A$ is called a {\it saturated PGC algebra}, if
for every left torsionfree PGC algebra $B$ with a  PGC algebra isomorphism  
$\phi:B_{\{w\}}\to A_{\{w\}}$  
 for some integer~$w>0$, the map
$\phi$ extends to an injective
PGC algebra homomorphism $\phi': B \to A$.

\begin{lemma} \label{n-sat-pgc-sat}
   If a PGC algebra~$A$ is saturated as an $\mathbb{N}$-graded algebra, then~$A$ is a saturated PGC algebra.  
\end{lemma}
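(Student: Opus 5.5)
We need to show: if $A$ is saturated as an $\mathbb{N}$-graded algebra, then for every left torsionfree PGC algebra $B$ and every PGC isomorphism $\phi\colon B_{\{w\}}\to A_{\{w\}}$, the map $\phi$ extends to an injective PGC homomorphism $\phi'\colon B\to A$. The plan is to forget the PGC structure, apply saturation in the category of $\mathbb{N}$-graded algebras to obtain an injective graded algebra map $\phi'$ extending $\phi$, and then check that $\phi'$ automatically respects the decompositions $B_i=B_{i,e}\oplus B_{i,o}$ and $A_i=A_{i,e}\oplus A_{i,o}$. Injectivity and multiplicativity then come for free. (A PGC isomorphism is in particular an isomorphism of graded algebras. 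If the definition of saturated $\mathbb{N}$-graded algebra requires $w\geq 2$, we may enlarge $w$, since $\phi$ restricts to $B_{\{w'\}}\to A_{\{w'\}}$ for $w'\geq w$.)

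The key step is to show $\phi'(B_{i,e})\subseteq A_{i,e}$ and $\phi'(B_{i,o})\subseteq A_{i,o}$ for $1\leq i<w$; for $i\geq w$ this holds because $\phi'$ agrees with $\phi$ there. Let $x\in B_{i,e}$ and write $\phi'(x)=y_e+y_o$ with $y_e\in A_{i,e}$ and $y_o\in A_{i,o}$. For every $b\in B_{\geq w}$ we have $xb\in B_e$ by Definition~\ref{zzdef-new-3.5}(ii), because $B_e$ is an ideal. Since $xb$ has degree $\geq w$, we get $\phi'(xb)=\phi(xb)\in A_e$. On the other hand $\phi'(xb)=y_e\phi(b)+y_o\phi(b)$, with $y_e\phi(b)\in A_e$ and $y_o\phi(b)\in A_o$, again because both are ideals. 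Comparing the $A_o$-components gives $y_o\phi(b)=0$ for all $b\in B_{\geq w}$. As $\phi$ is onto $A_{\geq w}$ in positive degrees, this means $y_o A_{\geq w}=0$, so $y_o$ is right torsion. The odd case is symmetric.

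The main obstacle is passing from torsion to zero. The hypothesis is left torsionfree, whereas the argument above naturally yields right torsion. Here the commutation rule \eqref{E3.5.1} does the work. For homogeneous $z\in A_0\cup A_o\cup A_e$ we have $zy_o=\pm y_oz$. Hence $A_{\geq w}y_o=0$ as well: decompose arbitrary elements of $A_{\geq w}$ into $e$- and $o$-parts and note that each part annihilates $y_o$ up to sign. So $y_o$ is left torsion, i.e. $y_o\in\tau^l(A)=0$ since $A$ is left torsionfree, and therefore $y_o=0$. Alternatively, one can multiply on the left from the start, using $bx$ in place of $xb$, which gives left torsion directly. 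In either version the conclusion is $\phi'(B_{i,e})\subseteq A_{i,e}$, and likewise $\phi'(B_{i,o})\subseteq A_{i,o}$.

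Finally, $\phi'$ is the identity-compatible map on degree $0$, where there is no type to preserve, so $\phi'$ is a PGC homomorphism. It is injective and extends $\phi$, so $A$ is a saturated PGC algebra.
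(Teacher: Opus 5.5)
Your proposal is correct and follows essentially the same route as the paper: forget the PGC structure, extend $\phi$ using saturation as an $\mathbb{N}$-graded algebra, then show the unwanted component of $\phi'(x)$ is left torsion and hence zero because $A$ is left torsionfree. The paper multiplies on the left from the start, using $B_{p,e}\,b=0$ for $b\in B_{i,o}$ (since $B_eB_o=0$). This is exactly your ``alternative'' version, so the detour through right torsion and \eqref{E3.5.1} is valid but not needed.
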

\begin{proof}
Let $\varphi_{\{w\}}:B_{\{w\}}\to A_{\{w\}}$  be a PGC algebra isomorphism. Then $\varphi_{\{w\}}$ can be extended to be an $\mathbb{N}$-graded algebra injective homomorphism~$\varphi: B\to A$.
For every~$b\in B_{i,o}$, assume~$\varphi(b)=a_o+a_e$ for some~$a_o\in A_{i,o}$ and~$a_e\in A_{i,e}$.  Since~$B_{p,e}b=0$ for all~$p\geq w$ and since~$\varphi_{\{w\}}$ is an isomorphism, it follows that~$A_{p,e}a_e=A_{p,e}(a_o+a_e)=\varphi(B_{p,e}b)=0$ and thus~$A_{p}a_e=(A_{p,o}+A_{p,e})a_e=0$ for every~$p\geq w$. Since~$A$ is a left torsionfree PGC algebra, we have~$a_e=0$, and thus $\varphi(B_{i,o})\subseteq A_{i,o}$. Similarly, we have $\varphi(B_{i,e})\subseteq A_{i,e}$. So $\varphi$ is a PGC algebra homomorphism.
\end{proof}

Let $x$ be a nonzero element in an algebra $A$. Then the left multiplication~$l_{x}$ is defined to be~$y\mapsto xy$ for every $y\in A$.

\begin{lemma}\label{saturated-condi}
Let $A$ be an ${\mathbb N}$-graded algebra. Suppose
that there is a sequence of nonzero homogeneous
elements $\{\alpha_s\}_{s\geq 1}$  in the center of $A$ with strictly increasing
degrees, satisfying the following
condition: 
\begin{enumerate}
\item[($\ast$)]
For every integer $d$, there is an integer $t_d > d$
such that, for every integer $s\geq t_d$,  the left
multiplication $l_{\alpha_s}$ induces a bijective
map $\oplus_{i=0}^{d}
A_i\to \oplus_{i=a_s}^{d+a_s} A_i$,  where
$a_s=\deg(\alpha_s)$.
\end{enumerate}
Then 
\begin{enumerate}
\item[(1)]
There exists a strictly increasing sequence of integers
$\{d_m\}_{m\geq 1}$ and a subsequence
$\{\alpha_{s_m}\}_{m\geq 1}$ of $\{\alpha_s\}_{s\geq 1}$, such that 
\begin{enumerate}
\item[($\ast1$)]
$d_1=1$, $a_{s_m}> d_{m}=d_{m-1}+a_{s_{m-1}}+1$,
for all $m\geq 2$, and
\item[($\ast2$)]
for every $t\geq m$, the left multiplication
 $l_{\alpha_{s_t}}$ induces a bijective map
$\oplus_{i=0}^{d_{m}}
A_i\to \oplus_{i=a_{s_t}}^{d_{m}+a_{s_t}} A_i$.
\end{enumerate}
Consequently, we have the following
{\it cancellation property}: If $\alpha_{s_t} f
=\alpha_{s_t} g$, with $\deg(f),\deg(g)\leq
d_{m}$ and $t\geq m$, then $f=g$.
\item[(2)]
$A$ is saturated.
\end{enumerate}
\end{lemma}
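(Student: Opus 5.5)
Part (1) is an inductive extraction from ($\ast$). Set $d_1=1$ and choose $s_1\geq t_{d_1}$ with $a_{s_1}>d_1$; this is possible because the degrees $a_s$ strictly increase. Now suppose $d_1,\dots,d_{m-1}$ and $s_1<\dots<s_{m-1}$ have been chosen. Put $d_m=d_{m-1}+a_{s_{m-1}}+1$ and pick $s_m>s_{m-1}$ with $s_m\geq t_{d_m}$ and $a_{s_m}>d_m$, which again exists by monotonicity of the degrees. This gives ($\ast1$).

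For ($\ast2$), take $t\geq m$. Then $s_t\geq s_m\geq t_{d_m}$, so ($\ast$) applied with $d=d_m$ shows that $l_{\alpha_{s_t}}$ is a bijection $\oplus_{i\le d_m}A_i\to\oplus_{i=a_{s_t}}^{d_m+a_{s_t}}A_i$. Cancellation is then immediate: $f-g$ lies in the source, and it is mapped to $0$ by an injective map.

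For part (2), first check that $A$ is left torsionfree. If $x\in A_i$ is nonzero, then $\alpha_s x\neq 0$ for all large $s$ by injectivity in ($\ast$), and centrality gives $x\alpha_s=\alpha_s x\neq0$. So $x$ is annihilated by no $A_{\ge n}$ on either side. Now let $B$ be left torsionfree and $\phi:B_{\{w\}}\to A_{\{w\}}$ an isomorphism. The extension to lower degrees is defined by division. For $b\in B_i$ with $i<w$, choose $m$ with $d_m\ge i$ and $s=s_t$ with $t\ge m$ and $a_s\ge w$. Let $\beta_s=\phi^{-1}(\alpha_s)\in B_{a_s}$. Then $\beta_s b\in B_{\ge w}$, so $\phi(\beta_s b)\in\oplus_{j=a_s}^{d_m+a_s}A_j$. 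By ($\ast2$) there is a unique $a\in\oplus_{j\le d_m}A_j$, which by degree must lie in $A_i$, with $\alpha_s a=\phi(\beta_s b)$. Set $\phi'(b)=a$.

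The steps to verify are the following.
\begin{enumerate}
\item[(i)] Independence of the choice of $s$. Given $s,s'$, compare $\alpha_{s'}\alpha_s a$ and $\alpha_s\alpha_{s'}a'$. Both equal $\phi(\beta_{s'}\beta_s b)=\phi(\beta_s\beta_{s'}b)$. This uses that $\beta_s$ is central in $B$ modulo left torsion: $\phi$ is multiplicative on $B_{\{w\}}$ and $\alpha_s$ is central, so $c(\beta_s b-b\beta_s)$ maps to $0$ for all $c\in B_{\ge w}$. Left torsionfreeness of $B$ then forces $\beta_s b=b\beta_s$. Cancellation now gives $a=a'$.
\item[(ii)] Agreement with $\phi$ when $i\ge w$, and linearity; both are clear.
\item[(iii)] Multiplicativity, by the same trick: $\alpha_s\alpha_{s'}\phi'(b)\phi'(b')=\phi(\beta_s b\beta_{s'}b')=\alpha_s\alpha_{s'}\phi'(bb')$, followed by cancellation. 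Cancellation by the product $\alpha_s\alpha_{s'}$ is done in two steps, applying ($\ast2$) once for each factor with a suitable $d_m$.
\item[(iv)] Injectivity. If $\phi'(b)=0$, then $\phi(\beta_s b)=0$, so $\beta_s b=0$ for all large $s$. By the centrality from (i), $b\beta_s=0$ as well. Since the $\beta_s$ generate enough of $B_{\geq n}$, $b$ is torsion, hence $b=0$.
\end{enumerate}

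I expect (iv), and the degree bookkeeping in (iii), to be the main obstacle. Knowing $b\beta_s=0$ for a sequence of elements does not immediately give $bB_{\ge n}=0$. For this I would use the surjectivity half of ($\ast$) for $A$, transported to $B_{\ge w}$ through $\phi$: every element of $B_j$ with $j$ large is $\beta_s c$ for some $c\in B_{\ge w}$ of bounded degree. Hence $b B_j=b\beta_s c=0$, so $b$ is torsion and therefore $b=0$.
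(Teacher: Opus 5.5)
Part (1) and steps (i)--(iii) of your part (2) are sound and follow the paper's construction of $\phi_\infty$: division by $\gamma_t=\alpha_{s_t}$, independence of $t$, agreement with $\phi$ on $B_{\{w\}}$, and multiplicativity. Your proof that $\beta_s b=b\beta_s$, via $B_{\geq w}(\beta_s b-b\beta_s)=0$ and left torsionfreeness of $B$, is a valid shortcut for the paper's Claim 4. For (i) alone you only need $\beta_s\beta_{s'}=\beta_{s'}\beta_s$, and the two-step cancellation works once you cancel the factor with the later index first.

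The gap is in (iv). Your assertion that every element of $B_j$, $j\gg 0$, equals $\beta_s c$ with $c\in B_{\geq w}$ does not follow from $(\ast)$, and it is false in general. Hypothesis $(\ast)$ only gives surjectivity onto windows $\oplus_{i=a_s}^{a_s+d}A_i$ with $s\geq t_d$, and says nothing about degrees between windows. Even inside a window, the elements of degree $<a_s+w$ have the form $\alpha_s y$ with $\deg y<w$, so $y$ cannot be pulled back through $\phi$. Concretely, take the algebra $A$ of Example~\ref{zzex6.3} ($x_i^2=0$, $\deg x_i=2^i$) with $\alpha_s=x_s$ for $s\geq 1$, $B=A$, $\phi=\mathrm{id}$ and $w\geq 2$. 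Then $(\ast)$ holds, but $B_{2^s+1}=\Bbbk x_sx_0$ and $x_sx_0\notin\beta_{s'}B_{\geq w}$ for every $s'$. So your argument only yields $B_jb=0$ for $j$ in a union of windows, which does not contradict left torsionfreeness. You also wrote $bB_j=0$; that is right torsion, whereas $B$ is only assumed left torsionfree.

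The fix uses what you have already proved and needs injectivity rather than surjectivity. By (ii) and (iii), for every $c\in B_{\geq w}$ you get $\phi(cb)=\phi'(cb)=\phi'(c)\phi'(b)=0$, hence $cb=0$. So $B_{\geq w}b=0$, and $b=0$ by left torsionfreeness; this is exactly the paper's Claim 6. Alternatively, for $y\in B_{\geq w}$ and $s$ large, $\alpha_s\phi(yb)=\phi(\beta_s yb)=\phi(y\beta_s b)=0$. The injectivity half of $(\ast)$ then gives $\phi(yb)=0$, i.e.\ $yb=0$.
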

 
\begin{proof}
 (1) 
Let $d_1 = 1$ and $s_1 = t_{d_1}$. Then by Condition ($\ast$), 
 for every $s \geq s_1$, the left multiplication $l_{\alpha_s}$ 
induces a bijective map $\oplus_{i=0}^{d_{1}}
A_i\to \oplus_{i=a_s}^{d_{1}+a_s} A_i$. 
Now assume that there exist $d_1,d_2,\cdots,d_{m-1}$ and $\alpha_{s_1},\alpha_{s_2},\cdots,\alpha_{s_{m-1}}$
such that $a_{s_k}> d_{k}=d_{k-1}+a_{s_{k-1}}+1$ for all $2 \leq k \leq m-1$. Moreover, 
for each $1 \leq k \leq m-1$  and $s \geq s_k$, the left multiplication
 $l_{\alpha_{s}}$ induces a bijective map
$\oplus_{i=0}^{d_{k}}
A_i\to \oplus_{i=a_{s}}^{d_{k}+a_{s_k}} A_i$.
Define $d_m = d_{m-1} + a_{s_{m-1}} +1$.  
Then by Condition ($\ast$) again,  there exists an integer $t_{d_m} > d_m$ such that for every $s \geq t_{d_m}$, the left multiplication~$l_{\alpha_s}$ 
induces a bijective map $\oplus_{i=0}^{d_{m}}
A_i\to \oplus_{i=a_s}^{d_{m}+a_s} A_i$. In particular, we can set~$s_m=t_{d_m}$.  
Therefore, by induction on $m$, we obtain a strictly increasing sequence of integers
$\{d_m\}_{m\geq 1}$ and a subsequence
$\{\alpha_{s_m}\}_{m\geq 1}$ of $\{\alpha_s\}_{s\geq 1}$ such that ($\ast1$) and ($\ast2$) hold. The cancellation property follows immediately.

(2) 
Clearly, $A$ is left torsionfree.  Let $B$ be a left torsionfree
${\mathbb N}$-graded algebra such that $\phi:
B_{\{w\}} \to A_{\{w\}}$ is an isomorphism for
some positive integer $w$. It suffices to show
that $\phi$ extends to an injective graded algebra
homomorphism from~$B$ to $A$.

Retain the notation in (1). Then define
$\gamma_t=\alpha_{s_t}$  and $c_t=\deg(\gamma_t)$ for every~$t\geq 1$.   By deleting  the first
finitely many $\gamma_t$ if necessary, we may
assume that $c_1>w$.
Define $\beta_t=
\phi^{-1}(\gamma_t)$ for every~$t\geq 1$. Then $\deg(\beta_t)=\deg(\gamma_t)=c_t$. 

Suppose that $g\in \oplus_{i=0}^{d_m} B_i$ for some
$m$.  Note that for every $t\geq m$,  the left multiplication~$l_{\gamma_t}$ induces a bijective map
$\oplus_{i=0}^{d_{m}}
A_i\to \oplus_{i=c_{t}}^{d_{m}+c_{t}} A_i$ by ($\ast2$).
So for $\phi(\beta_t g)\in  \oplus_{i=c_{t}}^{d_{m}+c_{t}} A_i$,  there
is a unique element, denoted by
$\phi_{t}(g)$, in $\oplus_{i=0}^{d_m} A_i$
such that
\begin{equation}
\label{E1}\tag{E3.9.1}
\gamma_t \phi_t(g)=\phi(\beta_t g).
\end{equation}
\medskip
\noindent
{\bf Claim 1:}  If
$t,s\geq m$, then $\phi_{t}(g)=\phi_{s}(g)$. As
a consequence, $\phi_{t}(g)$ is independent
of the choice of $t$ as long as $t\geq m$.

\noindent
{\it Proof of Claim 1.}  
Let~$n\geq  m$ be an  integer satisfying~$d_{n}> c_t+c_s+d_m$, and let~$r\in \{t,s\}$. 
Since $\gamma_{n}$ and $\gamma_{r}$ lie in the center of~$A$ and $\phi$ is an isomorphism, we deduce that
$\beta_n$ commutes with $\beta_r$. So we obtain
$$\gamma_n (\gamma_r \phi_{r}(g))=\phi(\beta_n)\phi(\beta_r g)
=\phi(\beta_n \beta_r g)=\phi(\beta_r \beta_n g)\\
=\phi(\beta_r)\phi(\beta_n g)
= \gamma_r (\gamma_n\phi_n(g)),
$$
and thus~$\gamma_n \gamma_r \phi_{r}(g)=\gamma_n \gamma_r \phi_{n}(g)$. Note that~$d_n> c_r+d_m\geq \deg(\gamma_r)+\deg(g) $ and $r\geq m$. By applying the  
cancellation property repeatedly, we deduce that
$\phi_{r}(g)=\phi_{n}(g)$.
In particular, 
$\phi_{t}(g)=\phi_{n}(g)=\phi_{s}(g)$.
This
finishes the proof of  {\bf Claim 1}.

\medskip

It follows from {\bf Claim 1} that $\phi_{t}(g)$
is independent of the choice $t$ for $t\gg 0$.
Now we use $\phi_{\infty}(g)$ for $\phi_t(g)$.
When $\deg(g)\leq d_m$ and $t\geq m$, \eqref{E1}
becomes
\begin{equation}
\label{E2}\tag{E3.9.2}
\phi_{\infty}(g)=l_{\gamma_{t}}^{-1}(\phi(\beta_t g)).
\end{equation}
By ($\ast2$), the notation~$l_{\gamma_{t}}^{-1}$ makes sense. Since $l_{\gamma_{t}}^{-1}(-)$
and $\phi(\beta_t -)$ are $\Bbbk$-linear, one sees that

\medskip
\noindent
{\bf Claim 2:} $\phi_{\infty}$ is $\Bbbk$-linear.

\medskip
\noindent
{\bf Claim 3:} When restricted to $B_{\{w\}}$,
$\phi_{\infty}=\phi$.

\noindent
{\it Proof of Claim 3.} For $g\in B_{\{w\}}$ with degree $\leq d_m$ for
some $m$, choose an arbitrary integer $t\geq m$,
then $
\gamma_t \phi_{\infty}(g)
=\gamma_t \phi_{t}(g)=\phi (\beta_t g)
=\phi(\beta_t)\phi(g)=\gamma_t \phi(g)$.  
The claim follows from the cancellation property.

\medskip
\noindent
{\bf Claim 4:} For $g\in B$, there is an integer $t_g$ such that
$g\beta_{t}=\beta_t g$ for all $t\geq t_g$.

\noindent
{\it Proof of Claim 4.} Assume that $\deg(g)\leq d_m$ and set~$t_g=m$. Let $f:=\beta_t g-g \beta_t$ which is in
$B_{\{w\}}$ since $\deg (\beta_t)\geq w$ by assumption.
For $s\gg t\geq t_g$, we have
$$\begin{aligned}
\gamma_s \phi(f)
&=\phi(\beta_s) \phi(\beta_t g- g \beta_t)
=\phi(\beta_s \beta_t g)-\phi(\beta_s g \beta_t)\\
&=\phi(\beta_s) \phi(\beta_t g)-\phi(\beta_s g)\phi(\beta_t)
=\gamma_s \gamma_t \phi_{\infty}(g)-\gamma_{s} \phi_{\infty}(g)
\gamma_t\\
&=\gamma_s \gamma_t \phi_{\infty}(g)-\gamma_{s}\gamma_t \phi_{\infty}(g)
=0.
\end{aligned}
$$
Applying the cancellation property, we obtain
$\phi(f)=0$, which forces $f=0$.

\medskip
\noindent
{\bf Claim 5:} $\phi_{\infty}$ preserves multiplication.

\noindent
{\it Proof of Claim 5.} For all $f,g\in B$ of degree $\leq d_m$ for
some $m$, choose $s\gg t\gg m$, then we have
$$\begin{aligned}
\gamma_s \gamma_t \phi_{\infty} (fg)
&=\phi(\beta_s)\phi(\beta_t fg)\\
&=\phi(\beta_s \beta_t fg)=\phi(\beta_s f \beta_t g)
\qquad\qquad \qquad {\text{by {\bf Claim 4}}}\\
&=\phi(\beta_s f)\phi(\beta_t g)=\gamma_s \phi_{\infty}(f)
\gamma_t \phi_{\infty}(g)\\
&=\gamma_s \gamma_t \phi_{\infty}(f)\phi_{\infty}(g).
\end{aligned}
$$
By applying the cancellation property twice,
we deduce
$\phi_{\infty} (fg)=\phi_{\infty}(f)\phi_{\infty}(g)$.

\medskip
\noindent
{\bf Claim 6:} $\phi_{\infty}$ is injective.

\noindent
{\it Proof of Claim 6.} For $g\in B$ with
$\phi_{\infty}(g)=0$, by {\bf Claim 3} and {\bf Claim 5}, we have
$$
\phi(B_{\geq w} g)= \phi_{\infty}(B_{\geq w} g)
=\phi_{\infty}(B_{\geq w})\phi_{\infty}(g)
=0.
$$
Since $\phi$ is an isomorphism, we obtain $B_{\geq w}g=0$.  Since $B$ is left torsionfree, $g=0$.

The assertion follows by combining the above claims.
\end{proof}

\begin{corollary}
\label{coro3.12-saturated}
Let $A$ be an ${\mathbb N}$-graded algebra with Hilbert series $\frac{n}{1-t^r}$ for
some positive integers $r$ and $n$. If $A$ contains a homogeneous nonzerodivisor of positive degree in the center of~$A$,
then $A$ is saturated.
\end{corollary}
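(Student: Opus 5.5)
We reduce the corollary to Lemma~\ref{saturated-condi}(2). Let $z$ be the given central homogeneous nonzerodivisor, of degree $a>0$. The candidate sequence is $\alpha_s:=z^{rs}$ for $s\geq 1$, of degree $a_s=ars$. These elements are central, nonzero (since $z$ is a nonzerodivisor, so are its powers), and their degrees strictly increase. It therefore suffices to verify condition~($\ast$).

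The key observation is that the Hilbert series $\frac{n}{1-t^r}$ says $\dim A_i=n$ when $r\mid i$ and $\dim A_i=0$ otherwise. In particular $\dim A_i=\dim A_{i+k}$ whenever $r\mid k$. Since $ars$ is divisible by $r$, for every $i\geq 0$ we have $\dim A_i=\dim A_{i+a_s}$.

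Left multiplication $l_{\alpha_s}$ maps $A_i$ into $A_{i+a_s}$ and is injective because $\alpha_s$ is a nonzerodivisor. These are finite-dimensional spaces of equal dimension, so $l_{\alpha_s}\colon A_i\to A_{i+a_s}$ is bijective for each $i$. Summing over $0\leq i\leq d$ gives a bijection $\oplus_{i=0}^{d}A_i\to\oplus_{i=a_s}^{d+a_s}A_i$ for every $d$ and every $s$. Thus ($\ast$) holds with any choice $t_d>d$, for instance $t_d=d+1$, and Lemma~\ref{saturated-condi}(2) gives that $A$ is saturated.

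The only point requiring care is the choice $\alpha_s=z^{rs}$ rather than $z^s$. If $r\nmid a$, multiplication by $z$ need not preserve the pattern of nonzero degrees. In fact, since $A_i=0$ unless $r\mid i$, one can check that $a$ is automatically a multiple of $r$ ($z\neq 0$ forces $A_a\neq 0$). Either way, using $z^{rs}$ guarantees the degree shift is divisible by $r$, so the dimension-matching argument applies. I expect no real obstacle: the argument is a direct application of the preceding lemma plus the injective-between-equal-dimensions principle.
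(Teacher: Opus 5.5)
Your proposal is correct and follows essentially the same route as the paper, which takes $\alpha_t=x^t$, matches $\sum_{i=0}^m\dim A_i$ with $\sum_{i=\deg(x^t)}^{m+\deg(x^t)}\dim A_i$ via the Hilbert series, and uses injectivity of $l_{x^t}$ to verify $(\ast)$ before invoking Lemma~\ref{saturated-condi}. Using $z^{rs}$ instead of $z^s$ is a harmless extra precaution: as you observe, $r\mid\deg z$ holds automatically, and the paper relies on this implicitly.
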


\begin{proof}
Let $x$ be a homogeneous nonzerodivisor of positive degree in the center of~$A$. 
By the assumption on the Hilbert series of~$A$, we have~$${\mathop\sum\limits_{i=0}^m}\dim(A_i) = {\mathop\sum\limits_{i=\deg(x^t)}^{m+\deg(x^t)}}\dim(A_i)$$ for all~$m \geq 0$ and $t\geq 1$.  
It follows that the left multiplication by~$x^t$ induces a bijective
map $\oplus_{i=0}^{m}
A_i\to \oplus_{i=\deg(x^t)}^{m+\deg(x^t)} A_i$.
Let~$\alpha_t=x^t$.  Then the sequence $\{\alpha_t\}_{t\geq 1}$ satisfies  
Condition ($\ast$) in Lemma \ref{saturated-condi}, and thus the assertion
follows.
\end{proof}

 As specific examples of Corollary~\ref{coro3.12-saturated}, we immediately have the following  lemmas.  

\begin{lemma}\label{zzlem-new-3.8}
 Let $B\{c\}$ be an ${\mathbb N}$-graded algebra defined as in Example~\ref{zzex-new-3.6} or in Example~\ref{zzrema-new-3.7}.  Then $B\{c\}$ is saturated as an $\mathbb{N}$-graded algebra.  Consequently, $B\{c\}$ is a saturated PGC algebra.  
\end{lemma}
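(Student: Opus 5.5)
The plan is to apply Corollary~\ref{coro3.12-saturated} directly to $A=B\{c\}$ with $r=b+1$, $n=1$, and $x=c$, and then pass to the PGC statement via Lemma~\ref{n-sat-pgc-sat}.

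\textbf{Hilbert series.} By construction $B\{c\}_i=\Bbbk x_{\bar i}c^{q_i}$ is one-dimensional for every $i\geq 0$. Hence $H_{B\{c\}}(t)=\sum_{i\geq 0}t^i=\frac{1}{1-t}$. This is not literally of the form $\frac{n}{1-t^r}$ with $r=b+1$. However, the proof of Corollary~\ref{coro3.12-saturated} only uses the equality
$$\sum_{i=0}^m\dim A_i=\sum_{i=\deg(x^t)}^{m+\deg(x^t)}\dim A_i$$
for all $m\geq 0$, $t\geq 1$. That equality holds trivially here because every graded piece has dimension $1$. (Equivalently, one can take $r=1$, $n=1$.) So I will invoke the corollary with $r=1$, $n=1$.

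\textbf{The element $c$.} Next I check that $c$ is homogeneous of positive degree $b+1$, lies in the center, and is a nonzerodivisor. In Example~\ref{zzex-new-3.6} centrality holds because $B\{c\}$ is commutative. In Example~\ref{zzrema-new-3.7} the algebra is graded commutative and $\deg c=b+1$ is even ($b$ odd), so $c$ commutes with everything.

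\textbf{Nonzerodivisor property (the main point).} By \eqref{ex3.6eq} with $i_1=0$, $x_0=1$, we get $c\cdot x_ic^j=x_ic^{j+1}$. So left (and right) multiplication by $c$ sends the basis $\{x_ic^j\}$ injectively onto the basis elements with $j\geq 1$, and $c$ is a nonzerodivisor. This holds regardless of nilpotency of $B$; the nilpotency remark in Example~\ref{zzex-new-3.6} is not needed. One should double-check that $c$ is indeed the basis element $x_0c^1$ in degree $b+1$, which holds since $\overline{b+1}=0$ and $q_{b+1}=1$.

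\textbf{Conclusion.} Corollary~\ref{coro3.12-saturated} then gives that $B\{c\}$ is saturated as an $\mathbb{N}$-graded algebra. Since $B\{c\}$ is a PGC algebra (of even, resp.\ odd, type), Lemma~\ref{n-sat-pgc-sat} yields that it is a saturated PGC algebra. The left torsionfreeness required in that lemma is supplied by the proof of Lemma~\ref{saturated-condi}(2), or directly by the fact that $c^k$ acts injectively.
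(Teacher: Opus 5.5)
Your proposal is correct and follows the paper's route. The paper presents the lemma as an immediate instance of Corollary~\ref{coro3.12-saturated}, with Hilbert series $\frac{1}{1-t}$ (so $n=r=1$) and $c$ as the central homogeneous nonzerodivisor of positive degree. The PGC statement then follows from Lemma~\ref{n-sat-pgc-sat}, exactly as you argue. Your detour through $r=b+1$ is unnecessary, and you are right that nilpotency of $B$ plays no role here.
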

 
 \begin{lemma}\label{zzlem-new3-3.9}
 Let~$F$ be a finite field extension of~$\Bbbk$ and let~$t$ be a variable of positive degree.  Then $F[t]$ is saturated either as an ${\mathbb N}$-graded $F$-algebra or $\Bbbk$-algebra. 
\end{lemma}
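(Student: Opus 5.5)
The plan is to apply Corollary~\ref{coro3.12-saturated} directly, once with $F[t]$ regarded as an ${\mathbb N}$-graded $F$-algebra and once as an ${\mathbb N}$-graded $\Bbbk$-algebra. Let $r=\deg(t)>0$. Then the homogeneous components are $F[t]_i=Ft^{i/r}$ when $r\mid i$ and $F[t]_i=0$ otherwise.

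First, over $F$: each nonzero component is one-dimensional, so the Hilbert series is $\frac{1}{1-t^r}$. This is of the required form $\frac{n}{1-t^r}$ with $n=1$.

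Second, over $\Bbbk$: set $n=[F:\Bbbk]<\infty$. Each nonzero component $Ft^{j}$ then has $\Bbbk$-dimension $n$, so the Hilbert series is $\frac{n}{1-t^r}$. The algebra is locally finite over $\Bbbk$ exactly because the extension is finite, and this is the only place where finiteness is used.

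In both cases $F[t]$ is commutative, so $t$ lies in the center. Since $F[t]$ is a domain, $t$ is a nonzerodivisor, and it is homogeneous of positive degree $r$. Corollary~\ref{coro3.12-saturated} therefore applies and gives saturation. The argument is routine and I do not expect a real obstacle.

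The one point to check is that Corollary~\ref{coro3.12-saturated} and Lemma~\ref{saturated-condi} are stated for algebras over the base field, so for the $F$-algebra case they should be read with $F$ in place of $\Bbbk$. Their proofs only use dimension counts and cancellation, so they go through verbatim over any field.
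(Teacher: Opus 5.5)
Your proposal is correct and matches the paper, which states this lemma as an immediate special case of Corollary~\ref{coro3.12-saturated}: $F[t]$ has Hilbert series $\frac{1}{1-t^{r}}$ over $F$ and $\frac{[F:\Bbbk]}{1-t^{r}}$ over $\Bbbk$, and $t$ is a central homogeneous nonzerodivisor of positive degree. The paper leaves implicit that Lemma~\ref{saturated-condi} and its corollary go through verbatim over $F$, and you are right that they do.
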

  
 The following lemma plays an important role in the proof of Theorem~\ref{zzthm0.7}.
\begin{lemma}\label{zzlem-new-3.10}
 Let $A$ be a finitely generated  torsionfree  PGC
algebra of even type (resp. odd type) with Hilbert series $\frac{1}{1-t}$. Then $A$ is
isomorphic to $B\{c\}$ given in Example \ref{zzex-new-3.6} (resp. Example \ref{zzrema-new-3.7}), where~$B$ is nilpotent. Consequently,  $A$ is saturated. 
\end{lemma}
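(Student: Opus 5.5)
The plan is to locate a central nonzerodivisor $c$ of minimal positive degree. We then read off $B$ from the multiplication of $A$ modulo powers of $c$, check that $A\cong B\{c\}$, and get saturation from Lemma~\ref{zzlem-new-3.8}.

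\textbf{Step 1: choosing $c$.} Since $H_A(t)=\frac{1}{1-t}$, each $A_i$ is one-dimensional; we fix a basis $x_i$ of $A_i$ with $x_0=1$. In both types $A$ is commutative or graded commutative.

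If every $x_i$ with $i\geq 1$ were nilpotent, then $A_{\geq 1}$ would be a finitely generated nil ideal of a (graded) commutative algebra. Such an ideal is nilpotent, so $A$ would be finite dimensional, a contradiction. Hence some $x_r$ is not nilpotent.

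In the odd type case $\operatorname{char}\Bbbk\neq 2$, so every odd-degree element squares to zero. Thus a non-nilpotent $x_r$ has even degree and is central.

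Let $b+1$ be the smallest degree of a non-nilpotent homogeneous element, and put $c=x_{b+1}$. Then $c^k\neq 0$ spans $A_{k(b+1)}$ for all $k\geq 0$, and every $x_i$ with $1\leq i\leq b$ is nilpotent.

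\textbf{Step 2: $A$ is a finite $\Bbbk[c]$-module.} Take finitely many homogeneous algebra generators of $A$. A nilpotent generator is integral over $\Bbbk[c]$. A non-nilpotent generator $x_r$ satisfies $x_r^{b+1}=\lambda c^r$ for some $\lambda\in\Bbbk$, because both sides lie in the one-dimensional space $A_{r(b+1)}$; so it is also integral over $\Bbbk[c]$. Since $A$ is (graded) commutative, $A$ is therefore a finitely generated $\Bbbk[c]$-module.

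\textbf{Step 3: $c$ is a nonzerodivisor.} This is the main obstacle. Let $J=\operatorname{Ann}(c)$, a graded ideal, and suppose $0\neq x_n\in J$. Because $A$ is left torsionfree, the ideal $x_nA$ is infinite dimensional. By Step 2 and noetherianity of $\Bbbk[c]$, $x_nA\subseteq J$ is a finitely generated $\Bbbk[c]$-module. On the other hand it is killed by $c$, so it is a finitely generated module over $\Bbbk[c]/(c)=\Bbbk$, hence finite dimensional. This contradiction gives $J=0$. Consequently $c x_i$ spans $A_{i+b+1}$ for every $i$, so $\{x_{\bar i}c^{q}\}$ is a basis of $A$, with $x_{\bar i}c^q\in A_{\bar i+q(b+1)}$ for $0\leq\bar i\leq b$.

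\textbf{Step 4: constructing $B$.} Let $B=\oplus_{i=0}^{b}\Bbbk x_i$ with product
\[
x_i\star x_j=\begin{cases} x_ix_j, & i+j\leq b,\\ c^{-1}x_ix_j, & i+j\geq b+1.\end{cases}
\]
The second case makes sense because $c$ is a nonzerodivisor and $x_ix_j\in A_{i+j}=\Bbbk\, x_{i+j-b-1}c$. Equivalently, $B\cong A/(c-1)A$ with the induced $\Bbbk$-grading modulo $b+1$, and $\mathbb{Z}/(b+1)$ is $\mathbb{Z}$ modulo $\deg c$. From this description $B$ is associative and (graded) commutative, with Hilbert series $1+t+\cdots+t^b$. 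In the odd type case $b$ is odd, since $\deg c=b+1$ is even.

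Comparing with \eqref{ex3.6eq}, the map $x_{\bar i}c^q\mapsto x_{\bar i}c^q$ is an isomorphism $B\{c\}\to A$ of PGC algebras. Finally, $B$ is nilpotent: each $x_i$ with $1\leq i\leq b$ is nilpotent in $A$ and hence in the quotient $B$. The ideal $\oplus_{i\geq1}B_i$ is a finitely generated ideal of the commutative algebra $B$ generated by nilpotent elements, so it is nilpotent.

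\textbf{Step 5: saturation.} Since $A\cong B\{c\}$, Lemma~\ref{zzlem-new-3.8} shows that $A$ is saturated as an $\mathbb{N}$-graded algebra and as a PGC algebra.
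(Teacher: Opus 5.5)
Your proof is correct. Its overall architecture matches the paper's: find a central regular $c$ of minimal positive degree, pass to $B=A/(c-1)A$, show $B$ is nilpotent, identify $A$ with $B\{c\}$, and invoke Lemma~\ref{zzlem-new-3.8}. The genuine difference is in how you get the regularity of $c$.

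\textbf{The paper's route.} It first shows that every non-nilpotent generator is a nonzerodivisor, by a direct torsion argument. If $x_iu=0$, the relations $x_j^i=\alpha_{i,j}x_i^j$ among non-nilpotent generators force every generator, raised to a high power, to kill $u$. Hence $u$ is torsion. The paper then takes $c$ of minimal degree among nonzerodivisors. Only afterwards does it derive nilpotence of $x_1,\dots,x_b$, from the dichotomy $x_i^{b+1}\in\{0\}\cup\Bbbk^{\times}c^i$.

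\textbf{Your route.} You define $c$ as the non-nilpotent homogeneous element of minimal degree, so nilpotence of the lower-degree $x_i$ holds by construction. You then package the integrality relations $x_r^{b+1}=\lambda c^r$ into module-finiteness of $A$ over $\Bbbk[c]$ and use noetherianity. Concretely, $x_nA\subseteq\operatorname{Ann}(c)$ would be a finitely generated $\Bbbk[c]/(c)$-module, hence finite dimensional, contradicting torsionfreeness.

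\textbf{Comparison.} The paper's argument is more elementary: the combinatorics of ``high-degree monomials contain a high power of some generator'' is done by hand. Yours is more structural. Its bonus is that it yields the free $\Bbbk[c]$-basis $\{x_{\bar i}c^q\}_{0\le \bar i\le b,\,q\ge 0}$ explicitly. That basis is exactly what is needed to check $A\cong B\{c\}$, a step the paper dismisses as ``easy to see''.

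\textbf{One point to tighten in Step 4.} In a $\mathbb{Z}/(b+1)$-graded algebra, $\oplus_{i\ge1}B_i$ is not automatically an ideal, because $B_iB_{b+1-i}\subseteq B_0$. You need $x_i\star x_{b+1-i}=0$ for $1\le i\le b$. This holds because a nonzero scalar value would make the nilpotent element $x_i$ a unit of the (graded) commutative algebra $B$. Equivalently, the nilpotent ideal generated by $x_1,\dots,x_b$ is graded and cannot meet $B_0=\Bbbk$, so it equals $\oplus_{i\ge1}B_i$. The paper's own proof passes over this point as well.
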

\begin{proof}
  We first show that~$A$ has a regular element (i.e., nonzerodivisor)~$c$. In addition, we will show that~$\deg(c)$ is an even integer if~$A$ is of odd type and ${\text{char}}\; \Bbbk\neq 2$. Assume that~$A$ is generated by~$X:=\{x_i \mid 1\leq i\leq n\}$ with $\deg(x_i)=i$. Let~$X_1=\{x_i \mid 1\leq i\leq n, x_i^m=0 \mbox{ for some } m\geq 2\}$ and let~$X_2=X\setminus  X_1$.  Since $A$ is infinite dimensional,  we deduce that the set~$X_2$ is nonempty. For all~$x_i,x_j\in X_2$, there exists a nonzero scalar~$\alpha_{i,j}$ such that~$x_{j}^i=\alpha_{i,j}x_i^j$.

   We claim that every~$x_i\in X_2$ is regular. Suppose to the contrary that~$x_i\in X_2$ is a zero divisor and~$x_iu=0$ for some $0\neq u\in A$. Then we have~$x_j^iu=\alpha_{i,j}x_i^ju=0$ for every~$x_j\in X_2$. It follows that~$x_q^m u=0$ for all~$x_q\in X$ and~$m\gg 0$. So~$u$ is a (left=right) torsion element, which contradicts the assumption that~$A$ is torsionfree.

Among all homogeneous nonzerodivisors of~$A$,  let $c$ be of minimal degree, say $b+1$.
If~$A$ is of odd type, then by convention, we have~${\text{char}}\; \Bbbk\neq 2$.  In this case, $\deg(c)$ is a positive even integer because~$c^2\neq 0$.

Let $B=A/(c-1)$. Then~$B$ is a ${\mathbb Z}/(b+1)$-graded algebra with Hilbert series $1+t+\cdots +t^b$.
Moreover, ~$B$ is commutative when $A$ is a PGC algebra of even type,
and~$B$ is graded commutative when $A$ is a PGC algebra of odd type
(in this case~$B$ is ${\mathbb Z}/(b+1)$-graded and $b+1$ is even).
Now we claim that $B$ is nilpotent. For every~$x_i\in A_i$ with
$1\leq i\leq b$, we deduce that $x_i^{b+1}$ is either 0 or a nonzero scalar
multiple of $c^i$. The latter case cannot happen since $c$ is a
nonzero divisor of the smallest positive degree. So~$x_i^{b+1}=0$ and $B$ is
nilpotent. It is easy to see that~$A$ is isomorphic to $B\{c\}$. The consequence follows from Lemma~\ref{zzlem-new-3.8}.
\end{proof}

\section{Equivalence of categories based on \cite{LQXZ25}}
\label{zzsec-cate-equi}
In this section, we continue to study the equivalence of categories based on \cite{LQXZ25}. The advantage of applying the equivalence lies in the fact that it is in general more convenient to consider the corresponding graded associative algebras. We first recall some notions from~\cite{LQXZ25}.

\begin{definition}[\cite{LQXZ25}]
\label{zzdef4.1}
Let $\PP$ be an operad.
\begin{enumerate}
\item[(1)]
An element $\lambda\in \PP(n)$ is called {\it
$\SG$-trivial} {\rm{(}}or {\it $\SG\triv$}{\rm{)}}
if $\lambda\ast \sigma=\lambda$ for all $\sigma
\in \SG_n$.
\item[(2)]
We say $\PP$ is {\it $\SG$-trivial} {\rm{(}}or
{\it $\SG\triv$}{\rm{)}} if every element in $\PP(n)$
is $\SG\triv$ for all $n$.
\item[(3)]
We say $\PP$ is {\it almost $\SG$-trivial} {\rm{(}}or
{\it almost $\SG\triv$}{\rm{)}} if every element in $\PP(n)$
is $\SG\triv$ for all $n\gg 0$.
\item[(4)]
An element $\lambda$ in $\PP(n)$ is called {\it $\AG$-trivial}
(or {\it $\AG\triv$}) if $\lambda\ast \sigma=\lambda$
for all $\sigma\in \AG_n$.
\item[(5)]
$\PP$ is called {\it $\AG$-trivial} (or {\it $\AG\triv$})
{\rm{(}}resp. {\it almost $\AG$-trivial} (or
{\it almost $\AG\triv$}{\rm{)}}) if every element in $\PP(n)$ is $\AG\triv$
for all $n$ {\rm{(}}resp. for all $n\gg 0${\rm{)}}.
\item[(6)]
An element $\lambda$ in $\PP(n)$ is called \emph{$\SG$-signed} (or {\it $\SG\sign$})
if $\lambda\ast \sigma=\sgn(\sigma)\lambda$ for all
$\sigma\in \SG_n$.
\item[(7)]
$\PP$ is called {\it $\SG\sign$} {\rm{(}}resp.
{\it almost $\SG\sign$}{\rm{)}} if every element
in $\PP(n)$ is $\SG\sign$ for all $n$ {\rm{(}}resp. for all
$n\gg0${\rm{)}}.
\end{enumerate}
\end{definition}

We can define a functor $G_{\SG\triv}$ from the category of GPerm
algebras to the category of $\SG$-trivial operads and a functor~${\mathcal F}_{\SG\triv}$ from the category of~$\SG\triv$ operads to the category of GPerm algebras, see  \cite[Lemma 3.7, Remark 3.8]{LQXZ25}.  Similarly, assuming~${\text{char}}\; \Bbbk\neq 2$, then for every PGPerm algebra $(A,\cdot)$, we can construct an~$\AG\triv$ operad~$\PP$ as in~\cite[Lemma 4.9]{LQXZ25}, which is denoted by~$G_{\AG\triv}(A)$.  Since~$\PP$ is~$\AG\triv$, the composition $x\circ_i y $, for $1\leq i\leq \Ar(x)$,  is determined by $x\circ_1 y$ for all~$x,y\in\PP$. Moreover, we have~$x\circ_1 y=x\cdot y$.   On the other hand, given an~$\AG\triv$ operad~$\PP$, we can construct a PGPerm algebra~${\mathcal F}_{\AG\triv}(\PP)$ from~$\PP$, see in \cite[Lemma 4.7]{LQXZ25}.  By~\cite[Theorems 3.9 and 4.11]{LQXZ25}, we have the following:

 (i) the functors $(G_{\SG\triv}, {\mathcal F}_{\SG\triv})$ induce
an equivalence between the category of GPerm algebras and the
category of $\SG\triv$ operads;

(ii) the functors $(G_{\AG\triv}, {\mathcal F}_{\AG\triv})$ induce
an equivalence between the category of PGPerm algebras and the
category of $\AG\triv$ operads.

The following is based on \cite[Corollary 3.10]{LQXZ25}:

\begin{theorem}
\label{zzthm-new-4.2}
The functors $(G_{\SG\triv}, {\mathcal F}_{\SG\triv})$ restrict to
\begin{enumerate}
\item[(i)]
an equivalence between the category of saturated 
commutative
${\mathbb N}$-graded algebras and that of $\SG\triv$ saturated operads, and
\item[(ii)]
an equivalence between the category of  infinite dimensional prime saturated commutative
${\mathbb N}$-graded algebras and that of $\SG\triv$ infinite dimensional  prime saturated operads.
\end{enumerate}
\end{theorem}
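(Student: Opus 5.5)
The plan is to start from the equivalence $(G_{\SG\triv}, {\mathcal F}_{\SG\triv})$ between GPerm algebras and $\SG\triv$ operads, together with \cite[Corollary 3.10]{LQXZ25}. That corollary, as I read it, says that under this equivalence the commutative ${\mathbb N}$-graded algebras (with ${\mathcal F}(\PP)=A_{\PP}$ and $x\circ_1 y = x\cdot y$) correspond exactly to the $\SG\triv$ operads whose associated GPerm algebra is commutative. So the work is to show that the extra adjectives transfer in both directions: ``saturated'' in part (i), and ``infinite dimensional'' and ``prime'' in part (ii). Once each property is shown to be preserved by both functors, the restriction of an equivalence to full subcategories closed under the functors is again an equivalence. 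Note that the restricted categories are full subcategories, since morphisms are just morphisms of operads and of algebras.

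\textbf{Part (i).} The main step is to compare the two saturation notions.

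First, left torsionfreeness. For an $\SG\triv$ operad $\PP$ with commutative $A=A_{\PP}$, every $x\circ_i y$ equals $x\circ_1 y$ up to the $\SG$-action, and $\SG$ acts trivially. Hence $\PP_{\geq n}\circ(\Bbbk x)=0$ if and only if $A_{\geq n-1}x=0$, so $\tau^l(\PP)$ corresponds to $\tau^l(A)$.

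Second, truncations. We have $(A_{\PP})_{\{w-1\}} = A_{\PP_{\{w\}}}$, up to the harmless index shift between the operad and its algebra.

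Now assume $A$ is saturated, and let $\QQ$ be a left torsionfree operad with $\phi:\QQ_{\{w\}}\cong\PP_{\{w\}}$. This is the expected obstacle, because $\QQ$ is not assumed $\SG\triv$ or commutative. I would first show that $\QQ$ is forced to be $\SG\triv$ with commutative $A_{\QQ}$. Take $q\in\QQ(n)$ and $\sigma\in\SG_n$, and let $u=q\ast\sigma-q$. For any $p\in\QQ(m)$ with $m\geq w$, the element $p\circ_i u$ lies in $\QQ_{\{w\}}\cong\PP_{\{w\}}$, which is $\SG\triv$. Using \eqref{E1.1.3}, we get $p\circ_i(q\ast\sigma)=(p\circ_i q)\ast\sigma'=p\circ_i q$, so $\QQ_{\geq w}\circ(\Bbbk u)=0$. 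Left torsionfreeness then gives $u=0$. The same argument applied to $x\circ_1 y - y\circ_1 x$, using commutativity of $A_{\PP}$ in high degrees, gives commutativity of $A_{\QQ}$.

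With that in hand, $A_{\QQ}$ is a left torsionfree commutative graded algebra isomorphic to $A$ in high degrees. Saturation of $A$ extends the isomorphism to an injective algebra map $A_{\QQ}\to A$. Applying $G_{\SG\triv}$ produces the required injective operad map $\QQ\to\PP$; injectivity holds because the underlying linear maps are the same.

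For the converse, take a saturated $\SG\triv$ operad $\PP$ and a left torsionfree commutative algebra $B$ agreeing with $A_{\PP}$ in high degrees. Then $G_{\SG\triv}(B)$ is a left torsionfree operad, and the saturation of $\PP$ gives the extension; applying ${\mathcal F}$ transfers it back to algebras. In the algebra definition $B$ is an arbitrary ${\mathbb N}$-graded algebra, not necessarily commutative. So I would run the same torsion trick at the algebra level: $B_{\geq w}(xy-yx)=0$ forces $xy=yx$.

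\textbf{Part (ii).} Dimension is preserved trivially, since $H_{\PP}(t)=tH_{A_{\PP}}(t)$. For primeness, I would show that ideals correspond. A two-sided operadic ideal of an $\SG\triv$ operad is exactly a graded subspace $I$ with $\PP\circ_1 I\subseteq I$ and $I\circ_1\PP\subseteq I$, because all other compositions differ by the trivial $\SG$-action. That is precisely a two-sided ideal of $A_{\PP}$. Likewise $I\circ J$ is the span of the elements $x\circ_1 y$, which is $IJ$ in $A_{\PP}$. Hence $\PP$ is prime if and only if $A_{\PP}$ is prime, and combining this with part (i) yields part (ii).
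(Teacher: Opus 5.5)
Your overall route matches the paper's, with two differences in how much is proved directly. The paper cites \cite[Lemma 3.13]{LQXZ25} for the fact that $\QQ$ is forced to be $\SG\triv$; that is exactly your \eqref{E1.1.3} torsion argument. It cites \cite[Corollary 3.10]{LQXZ25} for the transfer of left torsionfreeness, commutativity and primeness, which you check by hand. After that, both arguments apply saturation of $A$ and push back with $G_{\SG\triv}$.

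\textbf{The commutativity step does not go through as you justify it.} To run the torsion trick on $x\circ_1 y-y\circ_1 x$ you need $(px)y=(py)x$ in $A_\QQ$ for all $p$ of large degree. Here $px$ has large degree but $y$ need not. So this product is outside what the isomorphism $\QQ_{\{w\}}\cong\PP_{\{w\}}$ controls, and commutativity of $A_\PP$ in high degrees does not give it directly.

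\textbf{Repair for $A_\QQ$.} Once $\QQ$ is $\SG\triv$, every $\circ_i$ equals $\circ_1$, and \eqref{E1.1.2} becomes the Perm identity $pxy=pyx$ for all $p$ with $\deg p\geq 1$. Hence $xy-yx\in\tau^l(A_\QQ)=0$. The same identity is what makes $A_\PP$ commutative when $\PP$ is a saturated (hence left torsionfree) $\SG\triv$ operad. Your converse uses this without saying so.

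\textbf{The converse needs a real argument.} There $B$ is an arbitrary graded algebra with no Perm identity, so your claim $B_{\geq w}(xy-yx)=0$ is not immediate. The paper also only asserts this fact. One argument that works runs in two steps.
\begin{enumerate}
\item[(a)] For $d,e\in B_{\geq w}$ and any $y$, we have $e(yd-dy)=(ey)d-(ed)y=d(ey)-(de)y=0$. Hence $yd-dy$ is left torsion, so it is zero, and $B_{\geq w}$ is central in $B$.
\item[(b)] For $e,f\in B_{\geq w}$, centrality gives $ef(xy-yx)=(ex)(fy)-(fy)(ex)$. This is zero because $ex,fy\in B_{\geq w}$ commute. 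So each $e(xy-yx)$ is left torsion, hence zero. Then $xy-yx$ is itself left torsion, hence zero.
\end{enumerate}
With these repairs your proof is complete.
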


\begin{proof}
(i) Suppose that $A$ is saturated and commutative, and let $\PP=G_{\SG\triv}(A)$.
By definition, $A$ is (left=right) torsionfree. By
\cite[Corollary 3.10(1)]{LQXZ25}, $\PP$ is left torsionfree. Now let $\QQ$
be another left torsionfree operad
with $\QQ_{\{w\}}\cong \PP_{\{w\}}$. Then $\QQ$ is almost $\SG\triv$.
 By \cite[Lemma 3.13]{LQXZ25}, $\QQ$ is $\SG\triv$. By
\cite[Corollary 3.10(1)]{LQXZ25}, $B:={\mathcal F}_{\SG\triv}(\QQ)$
is a torsionfree commutative algebra. The equation
$\QQ_{\{w\}}\cong \PP_{\{w\}}$ implies that $B_{\{w\}}
\cong A_{\{w\}}$. Since $A$ is saturated, there is an injective map
$\phi: B\to A$ extending the isomorphism $B_{\{w\}}\cong A_{\{w\}}$.
By \cite[Corollary 3.10(1)]{LQXZ25}, $G_{\SG\triv}(\phi): \QQ\to \PP$
extends the isomorphism $\QQ_{\{w\}}\cong \PP_{\{w\}}$. Thus $\PP$
is saturated.

The converse can be proven in a similar way together with the following fact:
Let $A$ be a saturated commutative algebra and $B$ an arbitrary left torsionfree algebra
such that $B_{\{w\}}\cong A_{\{w\}}$ for some $w\ge 0$. Then $B$ is commutative.

(ii) By
\cite[Corollary 3.10(2)]{LQXZ25} and part (1), the assertion follows.
\end{proof}

\begin{corollary}\label{zzcor-new-4.3}
Let $A$ be $B\{c\}$ as in Example~\ref{zzex-new-3.6} such that~$B$ is nilpotent and let $\Lin_e(B)$ be the operad $G_{\SG\triv}(A)$.
Then we have the following:
\begin{enumerate}
\item[(1)]
The Hilbert series of $A$ is $\frac{1}{1-t}$ and
the Hilbert series of $\Lin_e(B)$ is $\frac{t}{1-t}$.
\item[(2)]
$A$ is a torsionfree and saturated graded algebra and
$\Lin_e(B)$ is a finitely generated saturated
$\SG\triv$ operad.
\item[(3)]
 $A$ is prime if and only if $b=0$. 
\item[(4)]
$\Lin_e(B)$ is prime if and only if $b=0$.
\end{enumerate}
\end{corollary}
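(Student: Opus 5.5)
For (1), I will read off the Hilbert series of $A=B\{c\}$ directly from Example~\ref{zzex-new-3.6}. There $B\{c\}_i=\Bbbk x_{\bar i}c^{q_i}$ is one-dimensional for every $i\geq 0$, so $H_A(t)=\frac{1}{1-t}$. The operad $\Lin_e(B)=G_{\SG\triv}(A)$ has $\Lin_e(B)(n)=A_{n-1}$ for $n\geq 1$ and $\Lin_e(B)(0)=0$. This gives $H_{\Lin_e(B)}(t)=tH_A(t)=\frac{t}{1-t}$, consistent with the relation $H_{\PP}(t)=tH_{A_{\PP}}(t)$ recorded in Section~\ref{zzsec1}.

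For (2), the saturation of $A$ is exactly Lemma~\ref{zzlem-new-3.8}. Torsionfreeness follows because $B$ is nilpotent, so $c$ is a central nonzerodivisor by Example~\ref{zzex-new-3.6}. Concretely, if $xA_{\geq n}=0$ for some $n$, then $xc^k=0$ for $k$ large, which forces $x=0$. Since $A$ is commutative, this handles left and right torsion together. Theorem~\ref{zzthm-new-4.2}(i) then shows that $\Lin_e(B)$ is a saturated $\SG\triv$ operad. For finite generation, note that $A$ is generated as an algebra by $x_1,\dots,x_b$ and $c$, since every basis element is $x_ic^j$. Under $G_{\SG\triv}$, the compositions $x\circ_i y$ equal $x\cdot y$, so the operad is generated by the corresponding finitely many elements of arities $\leq b+2$.

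For (3), first suppose $b=0$. Then $B=\Bbbk$ and $A=\Bbbk[c]$ with $\deg c=1$. This is a domain, and a graded domain is prime. Now suppose $b\geq 1$. Since $B$ is nilpotent, $x_1^N=0$ in $B$ for some $N$, so $x_1^N=0$ in $A$ as well. To see this, note that multiplication in $A$ only rescales by powers of $c$, so by \eqref{ex3.6eq} we get $x_1^N=(x_1^{\star N})c^{k}=0$. Thus $x_1$ is a nonzero nilpotent element of the commutative algebra $A$. Taking $n$ minimal with $x_1^n=0$, the ideal $I=Ax_1^{n-1}$ is nonzero and satisfies $I\cdot I=0$, so $A$ is not prime.

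For (4), I will transfer (3) through Theorem~\ref{zzthm-new-4.2}(ii) and \cite[Corollary 3.10(2)]{LQXZ25}. These show that for infinite-dimensional saturated objects, $A$ is prime if and only if $G_{\SG\triv}(A)$ is prime. Both hypotheses hold here: $A$ is infinite dimensional, and by (2) it is saturated. The main point to check is precisely that these hypotheses are met. If one prefers to avoid the citation, there is a direct argument for $b\geq 1$. The operad ideal generated by the element corresponding to $x_1^{n-1}$ is nonzero, and its operadic composition with itself vanishes because all partial compositions reduce to multiplication in $A$.
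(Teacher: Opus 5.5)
Your proposal is correct and follows the paper's route: part (2) rests on Lemma~\ref{zzlem-new-3.8} together with Theorem~\ref{zzthm-new-4.2}(i), and parts (1), (3), (4) are the routine verifications the paper omits as straightforward. The only difference is that you check torsionfreeness and finite generation directly, via the central nonzerodivisor $c$ and the generators $x_1,\dots,x_b,c$, rather than citing \cite[Corollary 3.10(1)]{LQXZ25}.
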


\begin{proof}
  Part (2) follows from Lemma~\ref{zzlem-new-3.8},  Theorem~\ref{zzthm-new-4.2}(i) and~\cite[Corollary 3.10(1)]{LQXZ25}.  What remains is straightforward, and thus the proofs are omited.
\end{proof}

\begin{corollary}
Every finitely generated saturated $\SG\triv$
operad with Hilbert series $\frac{t}{1-t}$ is of the form $\Lin_e(B)$.
\end{corollary}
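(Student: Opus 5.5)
The plan is to transport the problem to graded algebras through the equivalence of Theorem~\ref{zzthm-new-4.2}(i), and then apply Lemma~\ref{zzlem-new-3.10}. Let $\PP$ be a finitely generated saturated $\SG\triv$ operad with $H_{\PP}(t)=\frac{t}{1-t}$. Put $A:={\mathcal F}_{\SG\triv}(\PP)$. By Theorem~\ref{zzthm-new-4.2}(i), $A$ is a saturated commutative ${\mathbb N}$-graded algebra and $\PP\cong G_{\SG\triv}(A)$. As a graded vector space, $A_i=\PP(i+1)$, which gives $H_A(t)=H_{\PP}(t)/t=\frac{1}{1-t}$. A commutative graded algebra is a PGC algebra of even type, with $A_e=A_{\geq 1}$ and $A_o=0$.

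Next I will check the hypotheses of Lemma~\ref{zzlem-new-3.10}.

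\textbf{Torsionfree.} Saturated algebras are left torsionfree by definition. Since $A$ is commutative, left and right torsion coincide, so $A$ is torsionfree.

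\textbf{Finite generation.} For an $\SG\triv$ operad, every composition $x\circ_i y$ equals $x\circ_1 y=x\cdot y$ up to the (trivial) symmetric action. Hence a finite generating $\SG$-module $\VV\subseteq\sum_{i\le t}\PP(i)$ of $\PP$ maps to a finite generating set $\oplus_{i\le t-1}A_i$ of $A$. To make this precise, I will check that ${\mathcal F}_{\SG\triv}$ identifies suboperads with subalgebras, using \cite[Lemma 3.7]{LQXZ25}.

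Lemma~\ref{zzlem-new-3.10} then gives $A\cong B\{c\}$ with $B$ nilpotent. Applying $G_{\SG\triv}$ yields $\PP\cong G_{\SG\triv}(B\{c\})=\Lin_e(B)$, as Corollary~\ref{zzcor-new-4.3} defines it.

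The step most likely to need care is matching the PGC grading. Lemma~\ref{zzlem-new-3.10} is stated for PGC algebras of even type, so I must confirm that the commutative algebra ${\mathcal F}_{\SG\triv}(\PP)$ is regarded with $A_{i,e}=A_i$. This agrees with the comment after Definition~\ref{zzdef-new-3.5} that an even type PGC algebra is just a commutative ${\mathbb N}$-graded algebra.

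A second point is that ${\mathcal F}_{\SG\triv}(\PP)$ could be a GPerm algebra that is not commutative. Theorem~\ref{zzthm-new-4.2}(i) settles this, since it restricts to commutative algebras on the saturated side. Concretely, \cite[Corollary 3.10(1)]{LQXZ25} says torsionfree GPerm algebras are commutative, and $A$ is torsionfree.

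If both points go through, the remaining steps are formal.
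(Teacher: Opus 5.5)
Your proposal is correct and is essentially the paper's argument: pass through Theorem~\ref{zzthm-new-4.2}(i) to a saturated commutative graded algebra, view it as a PGC algebra of even type, and apply Lemma~\ref{zzlem-new-3.10}. The paper states this in one line. You additionally spell out the routine hypotheses it leaves implicit: the Hilbert series shift, torsionfreeness, and the fact that finite generation transfers because all compositions in an $\SG\triv$ operad reduce to $\circ_1$.
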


 \begin{proof} Note that commutative
${\mathbb N}$-graded algebras can be viewed as PGC algebras of even type.  The result follows from Theorem
\ref{zzthm-new-4.2}(i), and Lemma~\ref{zzlem-new-3.10}.
\end{proof}

Now we consider the saturated version of \cite[Corollary 5.7]{LQXZ25}.
\begin{theorem}
\label{zzthm-new-4.5}
Suppose ${\text{char}}\; \Bbbk\neq 2$. The functors
$(G_{\AG\triv}, {\mathcal F}_{\AG\triv})$ restrict to
\begin{enumerate}
\item[(i)]
an equivalence between the category of saturated PGC algebras and
that of $\AG\triv$ saturated operads, and
\item[(ii)]
an equivalence between the category of 
infinite dimensional prime saturated PGC algebras
and that of $\AG\triv$ infinite dimensional  prime saturated operads. 
\end{enumerate}
\end{theorem}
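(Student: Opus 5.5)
The plan is to mirror the proof of Theorem \ref{zzthm-new-4.2}, replacing the pair $(G_{\SG\triv},{\mathcal F}_{\SG\triv})$ and \cite[Corollary 3.10]{LQXZ25} by $(G_{\AG\triv},{\mathcal F}_{\AG\triv})$ and \cite[Corollary 5.7]{LQXZ25}. The latter already gives an equivalence between PGC algebras and a full subcategory of $\AG\triv$ operads, compatible with torsion (left torsionfree algebras correspond to left torsionfree operads) and with primeness. Since the equivalence is already in place, it suffices to show that a PGC algebra $A$ is saturated (as a PGC algebra) if and only if $\PP=G_{\AG\triv}(A)$ is a saturated operad. Because truncation commutes with both functors, $(G_{\AG\triv}(A))_{\{w\}}=G_{\AG\triv}(A_{\{w\}})$.

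For the direction ``$A$ saturated $\Rightarrow$ $\PP$ saturated'', I would proceed as follows.
\begin{enumerate}
\item[(a)] Take a left torsionfree operad $\QQ$ with an isomorphism $\QQ_{\{w\}}\cong\PP_{\{w\}}$. Then $\QQ$ is almost $\AG\triv$.
\item[(b)] Upgrade $\QQ$ to an honest $\AG\triv$ operad, via the $\AG$-analogue of \cite[Lemma 3.13]{LQXZ25}. Let $x\in\QQ(n)$ and $\sigma\in\AG_n$, and put $y=x\ast\sigma-x$. For every $\nu\in\QQ_{\geq w}$, the element $\nu\circ_i y$ has large arity, so it is $\AG\triv$ by (a). Using \eqref{E1.1.3}, the element $\nu\circ_i(x\ast\sigma)$ equals $(\nu\circ_i x)\ast\sigma'$ with $\sigma'\in\AG$, because $\sigma'$ has the same sign as $\sigma$. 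Hence $\nu\circ_i y=0$ for all such $\nu$, so $y$ is left torsion, and left torsionfreeness gives $y=0$.
\item[(c)] Set $B={\mathcal F}_{\AG\triv}(\QQ)$. This is a left torsionfree PGPerm algebra with a PGC-isomorphism $B_{\{w\}}\cong A_{\{w\}}$.
\item[(d)] Show that $B$ is actually PGC. The PGC identities \eqref{E3.5.1} and the ideal conditions hold in $A_{\{w\}}$. For $y,z\in B$ and any $u\in B_{\geq w}$, the products $u\,y\,z$ and $u\,z\,y$ lie in $B_{\{w\}}$, so the sign-twisted commutator of $y$ and $z$ is left-annihilated by $B_{\geq w}$ and vanishes by torsionfreeness. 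The same trick gives $B_oB_e=B_eB_o=0$.
\item[(e)] Saturation of $A$ now yields an injective PGC map $B\to A$ extending the given isomorphism, and applying $G_{\AG\triv}$ gives the required injective operad map $\QQ\to\PP$.
\end{enumerate}

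The converse direction is symmetric. If $\PP$ is saturated and $B$ is a left torsionfree PGC algebra with $B_{\{w\}}\cong A_{\{w\}}$, apply $G_{\AG\triv}$, use the saturation of $\PP$, and transport the resulting injective map back with ${\mathcal F}_{\AG\triv}$. Full faithfulness of the equivalence ensures the map is a PGC homomorphism. Lemma \ref{n-sat-pgc-sat} can be used to see that no grading-type issues arise. Part (ii) then follows from (i) together with the primeness and infinite-dimensionality statements of \cite[Corollary 5.7]{LQXZ25}, since equivalences preserve these properties.

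I expect step (b) to be the main obstacle, namely checking that the sign bookkeeping in \eqref{E1.1.3} and \eqref{E1.1.4} really keeps the induced permutation inside $\AG$. If \eqref{E1.1.3} alone does not suffice, the first fallback is to use right compositions $y\circ_j\nu$ via \eqref{E1.1.4}, together with Lemma \ref{zzlem-new-3.2}(2). The second fallback is the argument of \cite[Lemma 3.13]{LQXZ25}, reusing $\mathrm{char}\,\Bbbk\neq 2$. Step (d) is the secondary point to verify, that a left torsionfree almost-PGC PGPerm algebra is PGC; it uses the same cancellation-by-torsionfreeness idea.
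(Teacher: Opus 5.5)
Your architecture is the paper's: its proof of (i) runs through your steps (a)--(e), citing \cite[Lemma 3.13(2)]{LQXZ25} for (b), and \cite[Corollary 5.7(1)]{LQXZ25} both for transferring left torsionfreeness and for the fact that ${\mathcal F}_{\AG\triv}(\QQ)$ is a torsionfree PGC algebra whenever $\QQ$ is a left torsionfree $\AG\triv$ operad. The converse is done ``similarly'', and (ii) follows from \cite[Corollary 5.7(2)]{LQXZ25}. Your hand proof of (b) is correct. The $\sigma'$ in \eqref{E1.1.3} is a block permutation with the same sign as $\sigma$, so $\nu\circ_i(x\ast\sigma-x)=0$ for every $\nu$ of arity $\geq w$, and left torsionfreeness gives $x\ast\sigma=x$.

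The step that fails as written is (d). Knowing that $u\,y\,z$ and $u\,z\,y$ lie in $B_{\{w\}}$ gives no relation between them. \eqref{E3.5.1} in $B_{\{w\}}\cong A_{\{w\}}$ only interchanges two factors that \emph{both} lie in $B_{\{w\}}$, while $y,z$ may have degree $<w$.

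The relation you need comes from the operad, not from $A$. From \eqref{E1.1.4} and $\AG$-triviality, $\rho\circ_j\nu=(-1)^{(j-1)\deg(\nu)t(\rho)}\rho\circ_1\nu$ for $\rho$ of pure type and arity $\geq 3$. Then \eqref{E1.1.2} with $(i,k)=(1,2)$ gives the PGPerm identity $u\,y\,z=(-1)^{\deg(y)\deg(z)t(u)}\,u\,z\,y$ for $\deg u\geq 2$. Its sign is governed by the type of $u$, not by $t(y)t(z)$ as in \eqref{E3.5.1}.

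So before cancelling you also need an orthogonality statement, which \eqref{E1.1.3} and \eqref{E1.1.4} provide. For $\Ar(u)\geq 3$ and $\Ar(y)\geq 2$, the element $u\circ_1 y$ lies in both the type-$t(u)$ and the type-$t(y)$ isotypic components, so it vanishes when $t(u)\neq t(y)$. Splitting $B_{\geq N}=B_{\geq N,e}\oplus B_{\geq N,o}$ and checking cases then shows that $yz-(-1)^{\deg(y)\deg(z)t(y)t(z)}zy$ is left torsion, hence zero. The same holds for the products violating $B_oB_e=B_eB_o=0$ and the ideal conditions.

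Either supply this argument, or simply invoke \cite[Corollary 5.7(1)]{LQXZ25} as the paper does. With that repair your proof coincides with the paper's. The appeal to Lemma \ref{n-sat-pgc-sat} in the converse is unnecessary, since ${\mathcal F}_{\AG\triv}$ of an $\SG$-equivariant map automatically preserves types.
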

\begin{proof}
(i) Suppose $A$ is a saturated PGC algebra. By definition, $A$ is
(left and right) torsionfree. Let $\PP=G_{\AG\triv}(A)$.
By  \cite[Corollary 5.7(1)]{LQXZ25},  $\PP$ is left torsionfree. Now
let $\QQ$ be another left torsionfree operad with $\QQ_{\{w\}}\cong
\PP_{\{w\}}$. Then $\QQ$ is almost $\AG\triv$. By
\cite[Lemma 3.13(2)]{LQXZ25}, $\QQ$ is $\AG\triv$. By
\cite[Corollary 5.7(1)]{LQXZ25},
$B:={\mathcal F}_{\AG\triv}(\QQ)$ is a torsionfree PGC algebra.
The equation $\QQ_{\{w\}}\cong \PP_{\{w\}}$ implies that $B_{\{w\}}
\cong A_{\{w\}}$. Since $A$ is saturated, there is an injective homomorphism
$\phi: B\to A$ extending the isomorphism $B_{\{w\}}\cong A_{\{w\}}$.
By \cite[Corollary 5.7(1)]{LQXZ25},  $G_{\AG\triv}(\phi): \QQ\to \PP$
extends the isomorphism $\QQ_{\{w\}}\cong \PP_{\{w\}}$. Thus $\PP$
is saturated.

By a similar proof as above, the converse of the statement follows. 

(ii
) By
\cite[Corollary 5.7(2)]{LQXZ25} and part (1), the result follows.
\end{proof}

\begin{remark}\label{zzrema-new3-4.6}
 Suppose ${\text{char}}\; \Bbbk\neq 2$.   Let $A$ be the algebra $B\{c\}$ given in Example~\ref{zzrema-new-3.7} such that~$B$ is nilpotent and let $\Lin_{o}(B)$ be the operad $G_{\AG\triv}(A)$.  Then $\Lin_{o}(B)$ is $\SG\sign$, and it is not prime if~$B_{\geq 1}\neq \{0\}$.  Clearly, the Hilbert series of $\Lin_{o}(B)$ is $\frac{t}{1-t}$.
Finally, by Theorem~\ref{zzthm-new-4.5}(i), $\Lin_{o}(B)$ is a finitely generated  saturated linear operad. 
\end{remark}

In view of Theorems~\ref{zzthm-new-4.2} and \ref{zzthm-new-4.5},
we shall prove some basic facts about commutative graded algebras and
PGC algebras. These facts will be useful in the next section. We begin with the following observation.

\begin{lemma}
\label{lem-quo-ring}
Let $A$ be a commutative graded algebra 
of GK-dimension~1. Let~$x$ be a 
nonzero homogeneous element of~$A$ of 
positive degree and let~$B=A[x^{-1}]$. 
Suppose that~$A$ is a domain. Then the 
following hold. 
\begin{enumerate}
\item[(1)]  
$B$ is the graded quotient ring 
of~$A$.
\item[(2)] 
$B=F[t,t^{-1}]$ and~$A\subseteq 
F[t]$, where~$F:=B_{0}$ is 
an algebraic field extension of~$\Bbbk$ 
{\rm{(}}of GK-dimension zero{\rm{)}}, 
and~$t$ is an arbitrary nonzero 
homogeneous element in $B$ of minimal 
positive degree.
\item[(3)] 
If~$A$ is finitely generated, then there 
exists a positive integer~$m>\deg(x)$ 
such that for every $n\geq m$, we 
have~$A_n=xA_{n-\deg(x)}=(A[x^{-1}])_n
=(F[t])_n$.
\end{enumerate}
\end{lemma}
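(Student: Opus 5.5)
The plan is to work inside the graded localization $B=A[x^{-1}]$, which is a $\mathbb{Z}$-graded commutative domain because $A$ is. Put $d=\deg(x)>0$.

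For (1), I will show that every nonzero homogeneous $a\in A$ is invertible in $B$. Let $\deg a=k$. The algebra $\Bbbk[a,x]$ is a commutative graded domain of GK-dimension at most $1$, so $a$ and $x$ are algebraically dependent. Taking a homogeneous dependence relation of minimal degree, and then $\Bbbk$-linear combinations of the monomials $a^{i}x^{j}$ with $ik+jd=N$ fixed, I expect a relation of the form $a\cdot f(a,x)=\lambda x^{j}$ with $\lambda\neq 0$. To justify $\lambda\neq 0$, cancel powers of $a$, which is allowed in a domain: if every surviving monomial contained $a$, we could cancel $a$ and lower the degree, contradicting minimality. Hence $a$ divides a nonzero power of $x$ in $A$, so $a$ is a unit in $B$. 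Every homogeneous element of $B$ is $a/x^{n}$, so every nonzero homogeneous element of $B$ is invertible. Therefore $B$ is a graded field and equals the graded quotient ring of $A$.

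For (2), $F=B_0$ is a field because every nonzero element of $B_0$ is a unit. The set of degrees $e$ with $B_e\neq 0$ is a subgroup of $\mathbb{Z}$, nonzero since $d$ belongs to it; let $r$ be its positive generator, and let $t$ be any nonzero element of $B_r$. For $b\in B_{nr}$ we have $b=(bt^{-n})t^{n}$ with $bt^{-n}\in F$, and $t$ is transcendental over $F$ by degree considerations. So $B=F[t,t^{-1}]$, and then $A=A_{\geq 0}\subseteq B_{\geq 0}=F[t]$. The claim that $F/\Bbbk$ is algebraic is the main obstacle. Each $f\in F$ has the form $a/x^{n}$ with $a\in A_{nd}$. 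The pair $a$ and $x^{n}$ satisfies a homogeneous dependence relation $\sum \lambda_i a^{i}x^{n(N-i)}=0$, because GK-dimension $1$ forces the spaces of degree-$nNd$ monomials in them to become linearly dependent. Dividing this relation by $x^{nN}$ gives a nonzero polynomial over $\Bbbk$ satisfied by $f$. The fact that $\GK(F)=0$ then follows because a field algebraic over $\Bbbk$ is locally finite.

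For (3), assume $A$ is finitely generated. Then $A$ is a finite module over $\Bbbk[x]$: each generator $g$ satisfies a monic relation over $\Bbbk[x]$ after scaling. Indeed, the relation for $g^{d}$ and $x^{\deg g}$ obtained above, with its coefficients chosen so that the leading term in $g$ survives, gives this, in the spirit of the argument for (2). Consequently $F$ is finite over $\Bbbk$, and $F[t]$ is a finitely generated $A$-module inside $B$. The conductor argument runs as follows. Choose homogeneous generators $u_1,\dots,u_s$ of $F[t]$ over $\Bbbk[x]$, which is possible because $F[t]$ is integral over $A$ and hence finite over $\Bbbk[x]$. Each $u_i$ can be written as $a_i/x^{n_i}$ with $a_i\in A$. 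Hence $x^{n}F[t]\subseteq A$ for a large $n$. This gives $A_k=F[t]_k$ for all $k\geq nd+c$, where $c$ is the largest degree of a generator. Then $A_n=xA_{n-d}$ holds for $n\ge m$, since $F[t]_n=xF[t]_{n-d}$ and $A_{n-d}=F[t]_{n-d}$ once $n-d$ is large. Finally $(A[x^{-1}])_n=(F[t])_n$ for $n\geq 0$ by (2). Enlarging $m$ so that $m>d$ finishes the proof.
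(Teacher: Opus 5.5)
Your proof is correct. Part (1) is essentially the paper's argument: $\GKdim\le 1$ forces algebraic dependence of $a$ and $x$, you take a homogeneous relation, and cancellation in the domain produces a nonzero pure power of $x$, i.e.\ $a^{-1}\in\Bbbk[a,x,x^{-1}]$. Parts (2) and (3) take a different route.

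For the algebraicity of $F=B_0$, the paper uses $\GKdim A[x^{-1}]=\GKdim A$ from \cite[Proposition 4.2]{KL00} together with $\GKdim B_0[x]=\GKdim B_0+1$. You instead write down, for each $f=a/x^n$, an explicit polynomial coming from a homogeneous relation between the equal-degree elements $a$ and $x^n$. This is more elementary, since it avoids the localization invariance of GK-dimension. Treating the support of $B$ as a subgroup of $\mathbb{Z}$ also covers negative degrees cleanly.

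In (3), the paper uses $\GKdim(A/xA)=0$ (as $x$ is regular), hence $\dim A/xA<\infty$. This gives $A_n=xA_{n-d}$ first, and $(A[x^{-1}])_n\subseteq A_n$ then follows by peeling off powers of $x$. You instead:
\begin{enumerate}
\item[(a)] show $A$ is a finite $\Bbbk[x]$-module by integrality;
\item[(b)] deduce $[F:\Bbbk]<\infty$ and that $F[t]$ is finite over $\Bbbk[x]$;
\item[(c)] use the conductor inclusion $x^nF[t]\subseteq A$ to get $A_k=F[t]_k$ first, recovering $A_k=xA_{k-d}$ afterwards.
\end{enumerate}
The paper's route is shorter once one accepts the dimension drop for $A/xA$, which it asserts without proof. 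Yours is self-contained and yields the finiteness of $F$ over $\Bbbk$ along the way, which the paper establishes separately in Proposition~\ref{zzpro-new3-4.7}.

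Three steps deserve one more line each:
\begin{enumerate}
\item[(i)] If $\deg a=0$, cancelling $a$ does not lower the internal degree, so minimize the $a$-degree of the relation instead.
\item[(ii)] In (3), the monic relation for a homogeneous generator $g$ is obtained by cancelling the power of $x$ that multiplies the highest power of $g$ occurring in the relation, again using the domain property.
\item[(iii)] $[F:\Bbbk]<\infty$ holds because $B=A[x^{-1}]$ is a finitely generated graded $\Bbbk[x^{\pm 1}]$-module, so $B_0$ is finite dimensional.
\end{enumerate}
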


\begin{proof}
(1) Let~$\Omega=\{x^n\mid n\in \mathbb{N}\}$ such that~$x^0=1$. Then~$\Omega$ is an Ore set of~$A$ and we have~$B=A[x^{-1}]=A\Omega^{-1}$. For every homogeneous element~$a\in A$, we have
$$1= \GKdim(\Bbbk[x])\leq \GKdim(\Bbbk[a,x])\leq 
\GKdim A=1,$$ 
thus~$a$ is algebraic over~$\Bbbk[x]$. 
Then there exist homogeneous elements~$g_0,
\dots, g_n \in \Bbbk[x]$ such that~$g_na^n
+\cdots +g_1a+g_0=0$. Since $a$ is 
homogeneous, we can choose $g_i$ to 
be homogeneous or $g_i=c_i x^{n_i}$
for some $c_i\in \Bbbk$ and $n_i\in 
{\mathbb N}$. Since~$a$ is not a 
zero-divisor, we may further assume that~$g_0\neq 0$. So $a^{-1}\subseteq \Bbbk[a,x,x^{-1}]\subseteq A[x^{-1}]$.
Therefore, for all homogeneous 
elements~$0\neq a,b\in A$, we have~$ba^{-1}
\in A[x^{-1}]$. It follows that~$A[x^{-1}]$ 
is the graded quotient ring of~$A$, and 
every nonzero homogeneous element is 
invertible.

(2) By \cite[Proposition 4.2]{KL00}, we 
have $\GKdim (B)=\GKdim (A)=1$.  Since $\Bbbk[x]\subseteq B_{0}[x]\subset B$, 
it follows that $\GKdim(B_{0}[x])=1$ and 
thus $\GKdim(B_{0})=0$. So $B_0$ is an 
algebraic field extension of~$\Bbbk$. 

Now we show that~$B=B_{0}[t,t^{-1}]$. For 
every homogeneous element~$b\in B$ of 
positive degree, assume that~$\deg(b)=p\deg(t)+q$, 
where~$0\leq q<\deg(t)$. Then 
$0\leq\deg(bt^{-p})=q<\deg(t)$. By the 
choice of~$t$, we obtain~$bt^{-p}\in 
B_0$. It follows immediately that~$b
\in B_{0}[t]$ and thus~$B\subseteq B_{0}[t,t^{-1}]\subseteq B$. Clearly, 
$A\subseteq B_0[t]$.

(3) Since~$\GKdim(A/xA)=0$ and~$A$ is 
finitely generated,  $A/xA$ is finite dimensional. So there exist an integer~$m>\deg(x)$ such that~$A_n
=(xA)_n=xA_{n-\deg(x)}$ for every $n\geq m$. 
So for every~$bx^{-p}\in (A[x^{-1}])_n$ with~$b\in A$, we have~$b\in A_{n+p\deg(x)}
=x^pA_n$, and thus $bx^{-p}\in 
x^p A_n x^{-p}=A_n$. It follows that
$$(F[t])_n=(F[t,t^{-1}])_n=(A[x^{-1}])_n\subseteq A_n\subseteq (A[x^{-1}])_n\subseteq (F[t])_{n}.$$
The proof is completed.
\end{proof}

\begin{proposition}
\label{zzpro-new3-4.7}
Let $A$ be a prime commutative graded algebra of GK-dimension~1. Then we have the following:
\begin{enumerate}
\item[(1)]
$A$ is a subalgebra of $\overline{\Bbbk}[t]$  with $\deg t>0$. 
\item[(2)]
The following are
equivalent:
\begin{enumerate}
\item[(2a)]
$A$ is finitely generated.
\item[(2b)]
 $A$ is a $\Bbbk$-subalgebra of $F[t]$ with $\deg t>0$, where $F$ is a finite
field extension of $\Bbbk$.
\item[(2c)]
There is an integer $d$ such that $\dim  A_n \leq d$ for all
$n$.
\item[(2d)]
The Hilbert series of $A$ is rational.
\end{enumerate}
\item[(3)]
Suppose $\Bbbk$ is algebraically closed. Then $A$ is a subalgebra of $\Bbbk[t]$. As a
consequence, it is connected (i.e., $A_0=\Bbbk$)   and finitely generated.
\end{enumerate}
\end{proposition}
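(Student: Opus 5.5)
The plan is to reduce everything to Lemma \ref{lem-quo-ring}. The first step is to show that a prime commutative graded algebra $A$ is a domain. If $ab=0$ with $a,b$ nonzero homogeneous, then by commutativity the ideals $aA$ and $bA$ satisfy $(aA)(bA)=abA=0$, which contradicts primeness. Thus $A$ is a graded domain, hence a domain. I will assume $A$ is not concentrated in degree $0$; otherwise the GK-dimension would be $0$. So we may pick a nonzero homogeneous $x$ of positive degree.

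For (1), Lemma \ref{lem-quo-ring}(2) gives $A\subseteq F[t]$ with $F=B_0$ algebraic over $\Bbbk$ and $\deg t>0$. Embedding $F$ into $\overline{\Bbbk}$ then gives $A\subseteq\overline{\Bbbk}[t]$.

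For (2), I will prove the chain (2a)$\Rightarrow$(2b)$\Rightarrow$(2c)$\Rightarrow$(2a), together with (2a)$\Rightarrow$(2d)$\Rightarrow$(2c).

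\emph{(2a)$\Rightarrow$(2b).} By Lemma \ref{lem-quo-ring}(3), $A_n=(F[t])_n$ for $n\ge m$. Then $F=t^{-k}A_{k\deg t}$ for large $k$, so $F$ is finite-dimensional over $\Bbbk$ because $A$ is locally finite.

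\emph{(2b)$\Rightarrow$(2c).} Here $\dim A_n\le\dim_\Bbbk F$.

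\emph{(2c)$\Rightarrow$(2a).} This is the main obstacle. The approach is to work inside $F[t]$ with $F=B_0$. Each $A_n$ sits in $Ft^{n/\deg t}$, so $t^{-n/\deg t}A_n$ is a $\Bbbk$-subspace of $F$ of dimension at most $d$. The products $A_nA_{n'}$ correspond to products of these subspaces, which form an increasing family of subspaces of bounded dimension. Using a fixed $x$, one has $t^{-?}xA_n\subseteq$ the subspace for $A_{n+\deg x}$. This makes the subspaces $V_n:=t^{-n/\deg t}A_n$ along each residue class modulo $\deg x$ increase, and by the dimension bound they stabilize to some $V^{(r)}$. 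Only finitely many residue classes need to be considered. Consequently $A_n=xA_{n-\deg x}$ for $n\gg0$, and $A$ is generated by $\bigoplus_{n\le N}A_n$, which is finite-dimensional. Along the way, the stabilized subspaces multiply into each other and generate a field $F'$ that is finite over $\Bbbk$; one can take $F'$ in place of $F$, which is consistent with (2b).

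\emph{(2a)$\Rightarrow$(2d).} This holds because finitely generated commutative graded algebras have rational Hilbert series (Hilbert–Serre).

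\emph{(2d)$\Rightarrow$(2c).} Write $H_A=p(t)/q(t)$. Since $\dim A_n$ is nondecreasing along residue classes modulo $\deg x$ (multiplication by $x$ is injective), and $\GK A=1$ forces polynomial growth of degree $0$, the poles of $H_A$ are simple roots of unity. Therefore the coefficients are eventually quasi-polynomial of degree $0$, hence bounded.

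For (3), take $\Bbbk=\overline{\Bbbk}$. Then $F=B_0$ is algebraic over $\Bbbk$, so $F=\Bbbk$ and $A\subseteq\Bbbk[t]$ by Lemma \ref{lem-quo-ring}(2). It follows that $A_0\subseteq\Bbbk$, so $A_0=\Bbbk$ and $A$ is connected. Moreover $\dim A_n\le 1$, so (2c) holds, and finite generation follows from (2c)$\Rightarrow$(2a). Concretely, $A$ is a numerical-semigroup algebra in $t$, and such an algebra is finitely generated since the semigroup's complement in its generated group is finite.
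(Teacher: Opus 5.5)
Your proof is correct, and for (1), (3), (2a)$\Rightarrow$(2d) and (2d)$\Rightarrow$(2c) it is essentially the paper's argument. The real difference is in how you close the cycle in (2):
\begin{itemize}
\item The paper proves (2c)$\Rightarrow$(2b) by quoting Lemma~\ref{lem-quo-ring}(3) to get $A_n=(F[t])_n$ for $n\gg0$. It then proves (2b)$\Rightarrow$(2a) by Artin--Tate: $F[t]$ is a finite module over $\Bbbk[y]\subseteq A$.
\item You invoke Lemma~\ref{lem-quo-ring}(3) only under (2a), where its finite-generation hypothesis actually holds, and you prove (2c)$\Rightarrow$(2a) directly.
\end{itemize}
Your ordering is the more careful one. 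Under (2c) alone, the paper's appeal to Lemma~\ref{lem-quo-ring}(3) really needs $A_n=xA_{n-\deg x}$ for $n\gg0$, and that is exactly what your stabilization argument supplies. Your argument is also more elementary than Artin--Tate.

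It can be stated without $F$ at all. Since $A$ is a domain, multiplication by a fixed $x\in A_e$ is injective, so $\dim A_n\le\dim A_{n+e}\le d$. Hence each residue class modulo $e$ stabilizes, and $xA_n=A_{n+e}$ for $n\gg0$. As written, the $V_n$ do not literally increase: with $x=ft^{e/\deg t}$ you only have $fV_n\subseteq V_{n+e}$. The remark about a finite field $F'$ is also unnecessary, since you prove (2a)$\Rightarrow$(2b) separately.

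One small correction in (2d)$\Rightarrow$(2c): monotonicity along residue classes is not what forces the poles of $H_A$ to be roots of unity. That comes from the integrality of the coefficients together with radius of convergence at least $1$ (Kronecker/Fatou), or from the results of Stanley and Stephenson--Zhang that the paper cites. Once you have that, $\GKdim A=1$ forces those poles to be simple, which gives boundedness.
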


\begin{proof} Every commutative prime algebra is a domain. Since we implicitly assume that $A$ is locally finite, $A\neq A_0$ or there is a nonzero element $x$ of positive degree. So we can apply Lemma~\ref{lem-quo-ring}.

(1) By Lemma~\ref{lem-quo-ring}(2) and the notation introduced there, $A$ is a subalgebra of $F[t^{\pm 1}]$, where $F$ is field of GK-dimension zero and $\deg t>0$. Since $F$ is a
subfield of $\overline{\Bbbk}$, the claim follows.

(2) (2a) $\Rightarrow$ (2d):
This follows by a similar reasoning of \cite[Proposition 3.15(2)]{LQXZ25}.

(2d) $\Rightarrow$ (2c): Write $H_{A}(t)=\frac{f(t)}{g(t)}$.
Since $\GKdim (A)=1$, we have $f(1)\neq 0$ and
$g(t)=(1-t)h(t)$ with $h(1)\neq 0$. By \cite[Theorem 4.1]{Sta78}
or the proof of \cite[Corollary 2.2]{SZ97}, $\dim A_n$ for
$n\gg 0$ is a multi-polynomial function of degree 0 in the sense
of \cite[p.1597]{SZ97}. Therefore $\dim A_n$ is uniformly
bounded.

(2c) $\Rightarrow$ (2b):
By Lemma~\ref{lem-quo-ring}(2,3), 
$A$ is a $\Bbbk$-subalgebra of~$F[t^{\pm 1}]$ and~$(F[t])_n=A_n$ for $n\gg 0$, where~$F$ is an algebraic field extension of~$\Bbbk$. Now we have, for $n\gg 0$,
$$\dim(F)=\dim(F t^n)=\dim A_{n \deg t}<\infty$$     
as $A$ is a locally finite.

(2b) $\Rightarrow$ (2a):
Let $F$ be a finite field extension of $\Bbbk$. Let $y:=f t^m$, with $f\in F$ and $m\geq 1$, be a nonzero homogeneous element in $A$ and let $Y:=\Bbbk[y]$ be the $\Bbbk$-subalgebra of $A$ generated by $y$. Then $F[t]$ is a finitely generated module over $Y$.
(In fact, $F[t]$ is a free $Y$-module
of rank $m\dim F$.) Since $Y\subseteq A$, $F[t]$ is a finitely
generated module over $A$. By Artin-Tate lemma, $A$ is finitely generated 
as a $\Bbbk$-algebra.

(3) By Lemma~\ref{lem-quo-ring}, it follows that~$A$ is a subalgebra of~$\Bbbk[t]$ because~$\Bbbk$ is algebraically closed. But every subring of $\Bbbk[t]$ is connected and
finitely generated by part (2).
\end{proof}

The following result is a PGC version of Proposition \ref{zzpro-new3-4.7}.

\begin{proposition}
\label{zzpro-new3-4.8}
Suppose ${\text{char}}\; \Bbbk\neq 2$. Let $A$ be a prime PGC
algebra. Then $A$ is of either even or odd type
and $A$ is commutative graded. If, in addition,  $A$ is  of GK-dimension 1, then  the following hold.
\begin{enumerate}
\item[(1)]
$A$ is a $\Bbbk$-subalgebra of $\overline{\Bbbk}[t]$ with $\deg t>0$.
\item[(2)]
The following are equivalent:
\begin{enumerate}
\item[(2a)]
$A$ is finitely generated.
\item[(2b)]
$A$ is a $\Bbbk$-subalgebra of $F[t]$ where $F$ is a finite field extension
of $\Bbbk$.
\item[(2c)]
There is an integer $d$ such that $\dim  A_n \leq d$ for all
$n$.
\item[(2d)]
The Hilbert series of $A$ is rational.
\end{enumerate}
\item[(3)]
Suppose $\Bbbk$ is algebraically closed. Then $A$ is a subalgebra
of $\Bbbk[t]$. As a consequence, it is connected  and finitely
generated.
\end{enumerate}
\end{proposition}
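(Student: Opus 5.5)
We first show that a prime PGC algebra $A$ is of a single type. By Definition~\ref{zzdef-new-3.5}(ii), $A_o$ and $A_e$ are two-sided ideals with $A_oA_e=A_eA_o=0$. Primeness then forces $A_o=0$ or $A_e=0$, so $A$ is of even type or of odd type.

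If $A$ is of even type, then \eqref{E3.5.1} reads $yz=zy$ for homogeneous $y,z$, so $A$ is commutative. If $A$ is of odd type, then by \cite[Lemma 5.5]{LQXZ25} (equivalently, since $t(y)=t(z)=1$ for homogeneous elements of positive degree) the relation becomes $yz=(-1)^{\deg(y)\deg(z)}zy$, so $A$ is graded commutative. This settles the first assertion.

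For the GK-dimension 1 statements, the plan is to reduce to Proposition~\ref{zzpro-new3-4.7}. In the even case $A$ is a prime commutative graded algebra, and (1)--(3) are exactly Proposition~\ref{zzpro-new3-4.7}. In the odd case we show $A$ is in fact commutative, as follows.

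Let $x\in A_n$ with $n$ odd. Graded commutativity gives $x^2=-x^2$, so $2x^2=0$, and since $\mathrm{char}\,\Bbbk\neq 2$ we get $x^2=0$. Moreover $xa=\pm ax$ for every homogeneous $a$, so $xA=Ax$ and the two-sided ideal generated by $x$ is $I=\Bbbk x+xA$. Then
\[
I\circ I\subseteq (xA)(xA)=x(Ax)A=x(xA)A\subseteq x^2A=0,
\]
using $Ax=xA$. Primeness gives $I=0$, hence $x=0$. Therefore $A_n=0$ for all odd $n$, so $A$ is concentrated in even degrees. There the sign $(-1)^{\deg(y)\deg(z)}$ is $1$, and $A$ is an honest commutative graded algebra.

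It remains to check that $A$ is prime as an associative algebra. Here the PGC-algebra notion of primeness is the ring-theoretic one, since ideals are graded and PGC-compatible. If that compatibility is in doubt, the type decomposition is trivial once $A$ has one type, so graded ideals are automatically PGC ideals.

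Now Proposition~\ref{zzpro-new3-4.7} applies verbatim and yields (1), the equivalences in (2), and (3). The only point needing care is the passage from ``prime PGC algebra'' to ``prime commutative graded algebra'' in the odd case. I expect this to be the main obstacle, and it is handled by the nilpotent-ideal argument above. Note also that the embedding into $\overline{\Bbbk}[t]$ is only as a graded algebra; the degree of $t$ is then even in the odd case, which is consistent with Proposition~\ref{zzpro-new3-4.7}.
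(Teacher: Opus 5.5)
Your proof is correct and follows the paper's route. You separate the types using $A_oA_e=A_eA_o=0$ and primeness, use graded commutativity with $\mathrm{char}\,\Bbbk\neq 2$ to get $x^2=0$ for odd-degree $x$, kill the odd degrees by primeness, and then invoke Proposition~\ref{zzpro-new3-4.7}. Your square-zero ideal argument via $xA=Ax$ simply spells out the step the paper states in one line.

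One small point: ``commutative graded'' in the first assertion means genuinely commutative, so it is settled only by your odd-degree vanishing argument, not at the point where you write ``This settles the first assertion.'' That argument does not use the GK-dimension hypothesis, so nothing is missing. Also, the product of ideals in an algebra should be written $I\cdot I$ rather than $I\circ I$.
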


\begin{proof}
By Proposition \ref{zzpro-new3-4.7}, 
it suffices to show that $A$ is of either even or odd type
and $A$ is commutative graded.
 By definition, $I:=\oplus_{i\geq 1} A_{i,e}$ and
$J:=\oplus_{i\geq 1} A_{i,o}$ are ideals such that
$IJ=0$. Since $A$ is prime, either $I=0$ or $J=0$, namely, $A$ is of either even or odd type.
If $J=0$, then $A$ is a commutative graded algebra.  If~$I=0$, then~$A$ is a PGC algebra of odd type. By~\eqref{E3.5.1},
$A$ is a graded commutative algebra, and
for every element $x$ of odd degree, we have $x^2=0$.  Since $A$ is prime, $x=0$ for
all $x\in A_{i,o}$ when $i$ is odd, whence $A$ is commutative graded. The proof is completed.
\end{proof}

 In light of Propositions~\ref{zzpro-new3-4.7} 
and~\ref{zzpro-new3-4.8}, we describe $G_{\AG\triv}(F[t])$ below. Recall that~$\Com_{F}^{\{w\}}$ is defined as a suboperad of $\Com_{F}$ in~\eqref{E0.3.1}, and~$\Ope_{F}^{\{2w\}}$ is defined as a suboperad of $\Ope_{F}$ in~\eqref{E0.3.2}.

\begin{lemma}\label{zzlem-new3-4.9}
Let $F$ be an extension field of $\Bbbk$ and  let $A$ be a prime PGC algebra over a field $F$. Suppose that, as graded algebras, $A$ is isomorphic to $F[t]$ with $0<\deg(t)=:w\in \mathbb{N}$. Then we have $G_{\SG\triv}(A)=\Com_{F}^{\{w\}}$ if ${\text{char}}\; F =2$ or~$A$ is of even type; and $G_{\AG\triv}(A)=\Ope_{F}^{\{w\}}$ with $w=2m$ for some positive integer~$m$  if~${\text{char}}\; F \neq 2$ and~$A$ is of odd type. 
\end{lemma}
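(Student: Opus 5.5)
The plan is to compute the operad directly from the construction of $G_{\SG\triv}$ and $G_{\AG\triv}$ in \cite{LQXZ25}. We identify $A$ with $F[t]$ with $A_n=Ft^{n/w}$ when $w\mid n$ and $A_n=0$ otherwise. Via $\PP(n+1)=A_n$, the operad $\PP$ has $\PP(n)=F\mu_n$ exactly when $w\mid n-1$, i.e. $n=w\lfloor\frac{n-1}{w}\rfloor+1$, and $\PP(n)=0$ otherwise. This is precisely the support in \eqref{E0.3.1} and \eqref{E0.3.2}. Set $\mu_{kw+1}:=t^k$. Since $\PP$ is $\AG\triv$ (resp. $\SG\triv$), the composition $x\circ_i y$ is determined by $x\circ_1 y=x\cdot y$. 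This gives $\mu_n\circ_1\mu_m=\mu_{n+m-1}$, matching $\Com$ and $\Ope$ in the $\circ_1$ slot. It remains to pin down the $\SG$-action and the remaining $\circ_i$.

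\textbf{Case 1: ${\text{char}}\,F=2$ or $A$ of even type.} Then $A$ is a commutative graded algebra, i.e. a GPerm algebra in which every element is $\SG$-trivial. So $G_{\SG\triv}(A)$ is the $\SG\triv$ operad with $\mu_n\ast\sigma=\mu_n$. Equivariance \eqref{E1.1.4}, applied with $\phi$ a permutation sending $i$ to $1$, gives $\mu_n\circ_i\mu_m=\mu_n\circ_1\mu_m=\mu_{n+m-1}$. Hence $G_{\SG\triv}(A)$ is the suboperad $\Com_F^{\{w\}}$ of $\Com_F$.

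In characteristic $2$ an odd-type algebra is excluded by convention, so no separate argument is needed there. For even type in characteristic $\neq 2$ one should also check that $G_{\AG\triv}(A)$ agrees with $G_{\SG\triv}(A)$. This follows from \cite[Lemma 4.9]{LQXZ25}, since the odd part vanishes and $\SG_n$ then acts trivially.

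\textbf{Case 2: ${\text{char}}\,F\neq2$ and $A$ of odd type.} Here $A=A_o$, so the sign rule \eqref{E3.5.1} reads $yz=(-1)^{\deg y\deg z}zy$. Primeness forces odd-degree elements to be zero: an element $x$ of odd degree satisfies $x^2=-x^2$, so $x^2=0$, and $x=0$ by Proposition~\ref{zzpro-new3-4.8}. Since $t\neq 0$ is not nilpotent, $w=\deg t$ must be even, say $w=2m$.

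By the construction of $G_{\AG\triv}$ \cite[Lemma 4.9]{LQXZ25}, odd-type homogeneous elements are sent to $\SG\sign$ elements. So $\mu_n\ast\sigma=\sgn(\sigma)\mu_n$ with $n$ odd. Choosing $\phi$ a cycle with $\phi(i)=1$ in \eqref{E1.1.4}, the signs coming from $\phi$ and from the induced block permutation $\phi''$ cancel, because $m-1$ is even. This recovers $\mu_n\circ_i\mu_m=\mu_{n+m-1}$ for all $i$, which is exactly the $\Ope_F$ rule. Hence $G_{\AG\triv}(A)=\Ope_F^{\{2m\}}$.

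The main obstacle is this sign bookkeeping. One must verify from the explicit definition of $G_{\AG\triv}$ that odd type really yields $\SG\sign$ rather than merely $\AG\triv$, and that the partial compositions carry no extra sign $(-1)^{i-1}$, such as appears in $\Mas$. I would check this by computing $\sgn(\phi)\sgn(\phi'')$ for the cycle $\phi=(1\,2\cdots i)$. Here $\phi''$ moves a block of size $m$ past $i-1$ points, so $\sgn(\phi'')=\sgn(\phi)^{m}$. With $m$ odd, $\sgn(\phi)\sgn(\phi'')=\sgn(\phi)^{m+1}=1$, giving the desired cancellation.
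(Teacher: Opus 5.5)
Your proposal is correct and follows essentially the paper's route: read off the support $\PP(kw+1)=Ft^k$ from $A\cong F[t]$, take the trivial (resp.\ sign) action and the compositions from the constructions of $G_{\SG\triv}$ and $G_{\AG\triv}$, and force $w$ to be even because odd-degree elements square to zero in odd type. The only difference is that you derive $\mu_n\circ_i\mu_m=\mu_{n+m-1}$ from $\circ_1$ via equivariance, whereas the paper reads it directly off the construction. Your count $\sgn(\phi)\sgn(\phi'')=(-1)^{(i-1)(m+1)}$ for $\phi=(1\,2\cdots i)$ is right, and it also explains the $(-1)^{i-1}$ in $\Mas$. Do avoid reusing $m$ for both $w/2$ and the inner arity.
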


\begin{proof}
By Proposition~\ref{zzpro-new3-4.8}, $A(:=F[t])$ is of either even type or odd type.

If~${\text{char}}\; F =2$  or $A$ is of even type, then by the construction  in \cite[Lemma 3.7]{LQXZ25}, we obtain~$F=P(1)$, where~$\PP=G_{\SG\triv}(A)$. Moreover, for all~$f,f_1,f_2\in F$ and~$\sigma\in \SG_{wp+1}$, we have $ft^p \ast \sigma =ft^p$, and $f_1t^p\circ_{i} f_2t^{q}=f_1f_2t^{p+q}$ for every~$1\leq i\leq wp+1$. 
  It follows that $G_{\SG\triv}(A)=\Com_{F}^{\{w\}}$.

  If~${\text{char}}\; F \neq 2$ and~$A$ is of odd type,   then by the construction in \cite[Lemma 4.9]{LQXZ25}, we have
  $G_{\AG\triv}(A)(wp+1)=Ft^p$ and $G_{\AG\triv}(A)(n)=0$ if~$n\neq wp+1$ for every  integer $p$. By the proof of  Proposition~\ref{zzpro-new3-4.8}, we obtain that~$A_{i,o}=0$ if $i$ is an odd integer,  and consequently, $\deg(t)=2m$ 
  where $m$ is a positive  integer. Thus $\Ar(t)=2m+1\geq 3$. In particular, for every~$q\geq 1$, we have $\Ar(t^q) \in 2\mathbb{N}+1$.
  By the construction in~\cite[Lemma 4.9]{LQXZ25},  we obtain~$f_1t^p\circ_{i} f_2t^{q}=f_1f_2t^{p+q}$ and~$ft^p \ast \sigma =\sgn(\sigma)ft^p$ for all~$1\leq i\leq wp+1$
 and~$\sigma\in \SG_{wp+1}$, where~$f,f_1,f_2\in F$. Therefore, we have~$G_{\AG\triv}(A)=\Ope_{F}^{\{w\}}$ with~$w=2m$. The proof is completed.
\end{proof}

Then we have the following easy observation on prime almost $\AG\triv$ operads.

\begin{lemma}\label{zzlem-new3-4.10}
Let~$\PP$ be an
infinite dimensional prime almost $\AG\triv$ operad. Then~$\PP$  is left torsionfree and $\AG\triv$. In particular, every finitely generated prime operad of GK-dimension 1 is left torsionfree and $\AG\triv$.  
\end{lemma}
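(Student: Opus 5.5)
The plan has three steps: first show $\tau^l(\PP)=0$, then deduce that $\PP$ is $\AG\triv$ in every arity, and finally derive the consequence from Theorem~\ref{zzthm-new-2.4}(2).

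\emph{Left torsionfreeness.} Since $\PP$ is prime and infinite dimensional, Lemma~\ref{zzlem-new-3.2}(5) gives $\tau^l(\PP)=0$ directly. For the "in particular" clause, take a finitely generated prime operad of GK-dimension~1. It is infinite dimensional, since a finite dimensional operad has GK-dimension~0. By Theorem~\ref{zzthm-new-2.4}(2) it is almost $\AG\triv$. So the consequence reduces to the main statement.

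\emph{$\AG\triv$ in all arities.} Fix $N$ such that every element of $\PP(n)$ is $\AG\triv$ for $n\geq N$. Take $\lambda\in\PP(m)$ and $\sigma\in\AG_m$, and set $f:=\lambda\ast\sigma-\lambda$. We want $f=0$. For any $\nu\in\PP(n)$ with $n\geq N$ and any $1\leq i\leq n$, consider $\nu\circ_i f$. By \eqref{E1.1.3},
\[
\nu\circ_i(\lambda\ast\sigma)=(\nu\circ_i\lambda)\ast\sigma',
\]
where $\sigma'$ is the block permutation acting as $\sigma$ on the $m$ inputs in positions $i,\dots,i+m-1$ and as the identity elsewhere. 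Since $\sigma$ is even, $\sigma'$ is even, so $\sigma'\in\AG_{n+m-1}$. Because $n+m-1\geq N$, the element $\nu\circ_i\lambda$ is $\AG\triv$. Hence $\nu\circ_i f=0$.

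Therefore $\PP_{\geq N}\circ(\Bbbk f)=0$. The $\SG$-submodule generated is also zero, since acting by $\SG$ on the output of a zero composition still gives zero. This says $f\in\tau^l(\PP)=0$, so $f=0$ and $\lambda$ is $\AG\triv$.

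\emph{Remaining checks.} One should confirm that the definition of $\tau^l$ uses $\PP_{\geq n}\circ(\Bbbk x)$ exactly in this sense of compositions of elements of $\PP_{\geq n}$ with $x$, closed under $\SG$. It does, by the definition of $I\circ J$. Low arities cause no trouble: $\AG_m$ is trivial for $m\leq 2$, so the argument is only needed for $m\geq3$, where it applies unchanged.

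The main obstacle is only the bookkeeping that the induced block permutation $\sigma'$ lies in $\AG$. This holds because the sign of a block permutation equals the sign of the inner permutation when the outer permutation is the identity. No deeper input is needed beyond Lemma~\ref{zzlem-new-3.2}(5) and Theorem~\ref{zzthm-new-2.4}(2).
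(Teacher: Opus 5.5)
Your proof is correct and follows the paper's route: left torsionfreeness comes from Lemma~\ref{zzlem-new-3.2}(5) (that is, \cite[Lemma 2.5(1)]{LQXZ25}), and the ``in particular'' clause comes from Theorem~\ref{zzthm-new-2.4}(2). The only difference is that the paper cites \cite[Lemma 3.13(2)]{LQXZ25} for the step ``left torsionfree and almost $\AG\triv$ implies $\AG\triv$'', whereas you prove that step directly by showing that $\lambda\ast\sigma-\lambda$ is left torsion, using \eqref{E1.1.3} and the evenness of the block permutation $\sigma'$; this is a valid self-contained argument.
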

\begin{proof}
  By \cite[Lemma 2.5(1)]{LQXZ25},
$\PP$ is left torsionfree.  By \cite[Lemma 3.13(2)]{LQXZ25}, $\PP$ is  $\AG\triv$.
It follows from Theorem \ref{zzthm-new-2.4}(2) that
a finitely generated operad of GK-dimension $1$ is almost $\AG\triv$ and thus the last assertion follows.
\end{proof}

\begin{proposition}
\label{zzpro-new3-4.11}
Suppose ${\text{char}}\; \Bbbk\neq 2$.
Let $\PP$ be a   finitely generated   prime almost
$\AG\triv$ operad of GK-dimension 1.
Then $\PP$ is a suboperad of $\Com_F$ or
$\Ope_F$, where $F$ is a finite field extension of $\Bbbk$.
\end{proposition}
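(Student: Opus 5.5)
By Lemma \ref{zzlem-new3-4.10}, $\PP$ is left torsionfree and $\AG\triv$, since $\PP$ is prime, infinite dimensional (as $\GK(\PP)=1$) and almost $\AG\triv$. Because ${\text{char}}\;\Bbbk\neq 2$, we can pass to $A:={\mathcal F}_{\AG\triv}(\PP)$. By \cite[Corollary 5.7]{LQXZ25}, which is the input to Theorem \ref{zzthm-new-4.5}, $A$ is a PGC algebra, and it is prime and torsionfree because $\PP$ is. The underlying graded algebra of $A$ is $A_\PP$, with $A_i=\PP(i+1)$, so $\GK(A)=1$ (the functor ${\mathcal F}$ preserves GK-dimension). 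Finite generation of $\PP$ should give finite generation of $A$. I would argue this as follows: by Theorem \ref{zzthm0.5}(i), $\dim \PP(n)$ is uniformly bounded, so condition (2c) of Proposition \ref{zzpro-new3-4.8} holds directly, and the equivalence (2c)$\Leftrightarrow$(2a) there makes the finite-generation question moot.

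Next apply Proposition \ref{zzpro-new3-4.8}. It says $A$ is of either even or odd type, commutative graded, and by (2b) a $\Bbbk$-subalgebra of $F[t]$ with $\deg t=:w>0$ and $F$ a finite field extension of $\Bbbk$. I would choose $t$ as in Lemma \ref{lem-quo-ring}(2), namely a homogeneous element of minimal positive degree in the graded quotient ring $F[t^{\pm1}]$. Then $A_n=(F[t])_n$ for $n\gg0$ by Lemma \ref{lem-quo-ring}(3). I would also check that the PGC structure extends to $F[t]$: in the even case all of $F[t]$ is declared type $e$, and in the odd case all positive-degree elements are declared type $o$. In the odd case, primeness forces $A_{i,o}=0$ for odd $i$, so every nonzero homogeneous element of $A$ has even degree. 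Hence $w$ can be taken even, using that $t$ is a ratio of elements of $A$ of even degree.

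Then use functoriality. The inclusion $A\hookrightarrow F[t]$ is a PGC homomorphism, so applying $G_{\AG\triv}$ (or $G_{\SG\triv}$ in the even case) gives an injective operad map
\[
\PP\cong G_{\AG\triv}(A)\hookrightarrow G_{\AG\triv}(F[t]).
\]
By Lemma \ref{zzlem-new3-4.9} the target is $\Com_F^{\{w\}}$ in the even case and $\Ope_F^{\{w\}}$ in the odd case, and these are suboperads of $\Com_F$ and $\Ope_F$ respectively. One could alternatively use saturation, since $F[t]$ is saturated by Lemma \ref{zzlem-new3-4.9} and $A$ and $F[t]$ agree in high degree. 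However, the direct inclusion suffices here, and injectivity is preserved because the functors form an equivalence that acts as the identity on the underlying graded pieces.

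The step I expect to be the main obstacle is this last compatibility. One must verify that $G_{\AG\triv}$ sends a PGC subalgebra inclusion to a suboperad inclusion. One must also verify that, in the odd case, the degree-parity bookkeeping matches the odd arities of $\Ope_F$: $\Ar(t^q)=wq+1$ must be odd, which is exactly why $w$ must be even. A further point is that $F$ need not be $\Bbbk$, so $\Com_F$ and $\Ope_F$ are regarded as $\Bbbk$-operads by restriction of scalars, and this has to be consistent with the constructions in \cite[Lemmas 3.7, 4.9]{LQXZ25}.
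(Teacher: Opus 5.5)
Your proposal is correct and follows essentially the same route as the paper. It uses Lemma \ref{zzlem-new3-4.10} to get a left torsionfree $\AG\triv$ operad, then \cite[Corollary 5.7]{LQXZ25} to pass to a prime PGC algebra $A$. Proposition \ref{zzpro-new3-4.8} then embeds $A$ into $F[t]$ of the appropriate type, with $\deg t$ even in the odd case. Finally, functoriality together with Lemma \ref{zzlem-new3-4.9} lands $\PP$ in $\Com_F^{\{w\}}\subseteq\Com_F$ or $\Ope_F^{\{2m\}}\subseteq\Ope_F$.

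The only difference is that you explicitly justify the finiteness hypothesis needed to invoke Proposition \ref{zzpro-new3-4.8}(2), via the uniform bound of Theorem \ref{zzthm0.5}(i); the paper leaves this step implicit. A small slip: saturation of $F[t]$ is Lemma \ref{zzlem-new3-3.9}, not Lemma \ref{zzlem-new3-4.9}, although your argument does not use it.
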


\begin{proof}   By Lemma~\ref{zzlem-new3-4.10},
$\PP$ is left torsionfree and $\AG\triv$.   By  \cite[Corollary 5.7]{LQXZ25},
$\PP=G_{\AG\triv}(A)$ for a
prime PGC algebra $A$.  By Proposition \ref{zzpro-new3-4.8}, $A$ is  of either even or odd type, and $A$ is a subalgebra of $F[t]$ where $F$ is a finite field extension
of~$\Bbbk$.
If~$A$ is of even type, then~$\PP=G_{\AG\triv}(A)=G_{\SG\triv}(A)$ is a suboperad of $G_{\SG\triv}(F[t])$. By Lemma~\ref{zzlem-new3-4.9}, we have~$G_{\SG\triv}(F[t])=\Com_{F}^{\{w\}}$, which is a suboperad of
 $\Com_F$.
 Similarly, if~$A$ is of odd type,  then~$\PP$ is a suboperad of $\Ope_{F}^{\{2m\}}\subseteq \Ope_{F}$ for some positive integer~$m$. The proof is completed. 
\end{proof}

\section{Proofs of the classification results}
\label{zzsec5}

The goal of this section is to prove Theorems \ref{zzthm0.3},
\ref{zzthm0.4} and \ref{zzthm0.7}.

\subsection{Classification of prime almost $\AG\triv$
operads of GK-dimension 1}
\label{zzsec5.1}

In this subsection we continue to study prime almost $\AG\triv$ operad that may not be finitely generated.
\begin{theorem}
\label{zzthm5.1}
Let $\PP$ be a prime almost $\AG\triv$ operad of GK-dimension 1. Then the following statements hold.
\begin{enumerate}
\item[(1)]
$\PP$ is a suboperad of $\Com_{\overline{\Bbbk}}$ or
$\Ope_{\overline{\Bbbk}}$.
\item[(2)]
The following are equivalent:
\begin{enumerate}
\item[(2a)]
$\PP$ is finitely generated.
\item[(2b)]
$\PP$ is a $\Bbbk$-suboperad of $\Com_{F}$ or
$\Ope_{F}$, where $F$ is a finite field extension
of $\Bbbk$.
\item[(2c)]
There is an integer $d$ such that $\dim
\PP(n)\leq d$ for all $n$.
\item[(2d)]
The Hilbert series of $\PP$ is rational.
\end{enumerate}
\item[(3)]
If $\Bbbk$ is algebraically closed, then $\PP$ is
finitely generated and connected.
\end{enumerate}
\end{theorem}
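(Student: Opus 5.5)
The plan is to transfer everything to the algebra side through the equivalence of categories and then read off the conclusions from Propositions~\ref{zzpro-new3-4.7} and \ref{zzpro-new3-4.8}.

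\textbf{Reduction to an algebra.} Since $\GK(\PP)=1$, $\PP$ is infinite dimensional. Lemma~\ref{zzlem-new3-4.10} then shows that $\PP$ is left torsionfree and $\AG\triv$. For ${\text{char}}\;\Bbbk\neq 2$, \cite[Corollary 5.7]{LQXZ25} gives $\PP=G_{\AG\triv}(A)$ for a prime PGC algebra $A$. For ${\text{char}}\;\Bbbk=2$, an $\AG\triv$ prime operad should already be $\SG\triv$, via the $\extn$ structure and primeness, so I would use \cite[Corollary 3.10]{LQXZ25} to write $\PP=G_{\SG\triv}(A)$ with $A$ a prime commutative graded algebra. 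I expect this characteristic-$2$ step to be the main obstacle. My first attempt would be to cite \cite[Lemma 3.13]{LQXZ25} together with the corresponding prime results of \cite{LQXZ25}, which I expect give exactly this $\SG\triv$ reduction. In both cases $H_\PP(t)=tH_A(t)$, so $\GK(A)=1$. The same identity shows that finite generation (via the functor ${\mathcal F}$ and the equivalence), uniform boundedness, and rationality of the Hilbert series transfer between $\PP$ and $A$.

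\textbf{Parts (1) and (2).} By Proposition~\ref{zzpro-new3-4.7} or Proposition~\ref{zzpro-new3-4.8}, $A$ is either commutative of even type, or of odd type and commutative graded, and in either case $A\subseteq \overline{\Bbbk}[t]$. Base-changing the functor to the field $\overline{\Bbbk}$, the inclusion gives $\PP=G(A)\subseteq G(\overline{\Bbbk}[t])$. By Lemma~\ref{zzlem-new3-4.9}, $G(\overline{\Bbbk}[t])$ is $\Com^{\{w\}}_{\overline{\Bbbk}}$ or $\Ope^{\{2m\}}_{\overline{\Bbbk}}$, and these are suboperads of $\Com_{\overline{\Bbbk}}$ and $\Ope_{\overline{\Bbbk}}$ respectively. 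This proves (1).

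For (2), each of (2a) through (2d) corresponds to the matching item for $A$ in Propositions~\ref{zzpro-new3-4.7} and \ref{zzpro-new3-4.8}, so the equivalence follows from those propositions. The translation for (2b) works the same way as in (1), with $F[t]$ in place of $\overline{\Bbbk}[t]$; this is the argument of Proposition~\ref{zzpro-new3-4.11}. For (2a), I would check that $\PP$ is finitely generated if and only if $A$ is. Since composition in an $\AG\triv$ operad is determined by $\circ_1$, operadic generators of $\PP$ give algebra generators of $A$, and conversely algebra generators of $A$ generate $\PP$ as an operad.

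\textbf{Part (3).} When $\Bbbk$ is algebraically closed, part (3) of the same propositions gives $A\subseteq\Bbbk[t]$, with $A$ connected and finitely generated. Hence $\PP(1)=A_0=\Bbbk$, so $\PP$ is connected, and by (2) it is finitely generated.
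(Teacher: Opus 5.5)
Your proposal is correct and follows the paper's proof essentially step for step. The chain is the same: Lemma~\ref{zzlem-new3-4.10}, then \cite[Corollary 5.7]{LQXZ25} when ${\text{char}}\;\Bbbk\neq 2$ or \cite[Corollary 3.10(2)]{LQXZ25} when ${\text{char}}\;\Bbbk=2$, then Propositions~\ref{zzpro-new3-4.7}, \ref{zzpro-new3-4.8}, \ref{zzpro-new3-4.11} and Lemma~\ref{zzlem-new3-4.9}.

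The one fix is the citation for the characteristic-$2$ step. The paper obtains ``prime $\AG\triv$ $\Rightarrow$ $\SG\triv$'' from \cite[Lemma 5.3]{LQXZ25}. By contrast, \cite[Lemma 3.13]{LQXZ25} only upgrades almost $\AG\triv$ to $\AG\triv$, and that is already used inside Lemma~\ref{zzlem-new3-4.10}.
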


\begin{proof} Since $\PP$ is locally finite as always and of GK-dimension 1, it is infinite dimensional.
By Lemma~\ref{zzlem-new3-4.10}, $\PP$ is left torsionfree and $\AG\triv$.
If ${\text{char}}\; \Bbbk\neq 2$, then by \cite[Corollary 5.7]{LQXZ25}, we have
$\PP=G_{\AG\triv}(A)$ for some prime PGC algebra~$A$ of GK-dimension 1.
The statements follow from Proposition \ref{zzpro-new3-4.8}, Proposition~\ref{zzpro-new3-4.11} and \cite[Corollary 5.7]{LQXZ25}.
If ${\text{char}}\; \Bbbk=2$, by \cite[Lemma 5.3]{LQXZ25}, $\PP$ is $\SG\triv$.  By  \cite[Corollary 3.10(2)]{LQXZ25},  we have
$\PP=G_{\SG\triv}(A)$ for some prime commutative graded algebra of GK-dimension 1. The assertion follows
from Proposition \ref{zzpro-new3-4.7} and \cite[Corollary 3.10(2)]{LQXZ25}. 
\end{proof}

When $\overline{\Bbbk}$ is infinite dimensional over
$\Bbbk$, there are infinitely generated prime operads
of GK-dimension 1, see Example \ref{zzex6.1}.

\begin{proof}[Proof of Theorem \ref{zzthm0.3}]
By Theorem \ref{zzthm-new-2.4}(2), $\PP$ is almost
$\AG\triv$. The assertion follows from Theorem \ref{zzthm5.1}(1).
\end{proof}

Here is a variation of Theorem \ref{zzthm0.3}.

\begin{theorem}
\label{zzthm-new3-5.2}
Suppose $\PP$ is  an infinite dimensional prime operad such that $\dim \PP(n)$ is uniformly
bounded. Then the following hold.
\begin{enumerate}
\item[(1)]
$\PP$ is finitely generated and every element in $\PP$ is
central.
\item[(2)]
If further $\Bbbk$ is algebraically closed, then $\PP$ is a
suboperad of $\Com$ or $\Ope$. Consequently, $\PP$ is
connected.
\end{enumerate}
\end{theorem}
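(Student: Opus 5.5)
Since $\dim \PP(n)\le N$ for all $n$, we have $\sum_{i\le n}\dim\PP(i)\le Nn$, so $\GK(\PP)\le 1$. Because $\PP$ is infinite dimensional, infinitely many $\PP(n)$ are nonzero, and hence $\GK(\PP)=1$. Uniform boundedness also gives $\dim\PP(n)<n-2$ for $n\ge N+3$. I would therefore not go through Theorem~\ref{zzthm-new-2.4}, which needs finite generation, since finite generation is part of what is to be proved. Instead I would apply Proposition~\ref{zzpro1.4} directly to each $\PP(n)$ with $n\ge\max(9,N+3)$. This shows that the right $\AG_n$-action on $\PP(n)$ is trivial for $n\gg0$, so $\PP$ is almost $\AG\triv$, with no finite generation or characteristic hypothesis needed.

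With $\PP$ prime, almost $\AG\triv$ and of GK-dimension $1$, Theorem~\ref{zzthm5.1} applies. Condition (2c) is exactly our hypothesis, so by the equivalence (2a)$\Leftrightarrow$(2c)$\Leftrightarrow$(2b), $\PP$ is finitely generated and is a $\Bbbk$-suboperad of $\Com_F$ or $\Ope_F$ for a finite field extension $F/\Bbbk$. This proves the first half of (1). If $\Bbbk$ is algebraically closed, then $F=\Bbbk$, so $\PP$ is a suboperad of $\Com$ or $\Ope$, and connectedness follows from Theorem~\ref{zzthm5.1}(3). This gives (2).

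The remaining point is centrality in (1): for $\mu,\nu\in\PP$ we need $\mu\circ_i\nu=\nu\circ_j\mu$ for all admissible $i,j$. Since $\PP\subseteq\Com_F$ or $\Ope_F$, it suffices to check this in $\Com_F$ and $\Ope_F$ (as $F$-linear operads, hence also over $\Bbbk$). By bilinearity we reduce to basis elements $f\mu_n$ and $g\mu_m$ with $f,g\in F$. In $\Com_F$ we have $f\mu_n\circ_i g\mu_m=fg\,\mu_{n+m-1}=g\mu_m\circ_j f\mu_n$, because $F$ is commutative and the composite is independent of the slot. In $\Ope_F$, Example~\ref{zzex0.2}(iii) gives the same slot independence when $n,m$ are odd, and even arities vanish. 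In both cases every element is central.

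I expect the main subtlety to be the first step: obtaining almost $\AG\triv$ without finite generation, and making sure Theorem~\ref{zzthm5.1} really needs only primeness, almost $\AG\triv$ and GK-dimension $1$. It does, which is why it is stated for possibly infinitely generated operads. A smaller point is that Proposition~\ref{zzpro1.4}(2) covers characteristic $2$, so no restriction on $\mathrm{char}\,\Bbbk$ is needed.
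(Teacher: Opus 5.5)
Your route is the paper's: Proposition~\ref{zzpro1.4} gives almost $\AG\triv$ without any finite generation hypothesis. Theorem~\ref{zzthm5.1}(2) then gives finite generation and an embedding into $\Com_F$ or $\Ope_F$, where every element is central. Part (2) follows with $F=\Bbbk$.

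There is one invalid step. You deduce $\GK(\PP)=1$ from the fact that infinitely many $\PP(n)$ are nonzero, but this does not follow from \eqref{E1.2.1}. A bounded dimension sequence supported only on $n=2^k$ has partial sums $O(\log n)$, and hence GK-dimension $0$. Theorem~\ref{zzthm5.1}, through Lemma~\ref{lem-quo-ring}, is stated for GK-dimension $1$, so you need a lower bound, and it has to come from primeness.

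Here is one way to get it.
\begin{enumerate}
\item[(i)] Proposition~\ref{zzpro1.4} and Lemma~\ref{zzlem-new3-4.10} do not involve GK-dimension. They make $\PP$ left torsionfree and $\AG\triv$.
\item[(ii)] As in the proof of Theorem~\ref{zzthm5.1}, $\PP=G_{\AG\triv}(A)$, or $G_{\SG\triv}(A)$ in characteristic $2$. Here $A$ is a prime commutative graded algebra; in characteristic $\neq 2$ this is the first assertion of Proposition~\ref{zzpro-new3-4.8}, which needs no GK hypothesis. Hence $A$ is a domain.
\item[(iii)] Since $\PP$ is infinite dimensional, $A$ has a nonzero homogeneous element $x$ of some degree $d>0$. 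Then $x^k\neq 0$ for all $k$, so $\PP(kd+1)\neq 0$ for all $k$, and therefore $\GK(\PP)\geq 1$.
\end{enumerate}
Equivalently, you can check that the proof of Theorem~\ref{zzthm5.1} only uses $\GK\leq 1$ together with primeness and infinite dimensionality. The paper's own proof passes over this point without comment. With this repair, your argument is complete and coincides with the paper's.
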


\begin{proof}
(1) By Proposition \ref{zzpro1.4}, $\PP$ is
almost $\AG\triv$. By Theorem \ref{zzthm5.1}(2),
$\PP$ is finitely generated and a suboperad of
$\Com_{F}$ or $\Ope_F$ for a finite field extension
$F/\Bbbk$. It is clear that every element in
either $\Com_{F}$ or $\Ope_F$ is central.

(2) This follows from part (1) and Theorem \ref{zzthm5.1}(1).
\end{proof}

\begin{corollary}
\label{zzcor-new3-5.3}
(1) Let $F$ be a finite field extension of~$\Bbbk$. Then operads of the form $\Com^{\{w\}}_{F}$ or
$\Ope^{\{2w\}}_F$ 
are saturated (both as $F$-operads and $\Bbbk$-operads).

(2) The saturation of a finitely generated prime operad
of GK-dimension 1 is either $\Com^{\{w\}}_{F}$ or
$\Ope^{\{2w\}}_{F}$ for some finite field extension $F$ of $\Bbbk$
and $w\geq 1$. 
\end{corollary}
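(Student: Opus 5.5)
For part (1), I will pass to graded algebras and use the equivalences of Section~\ref{zzsec-cate-equi}. By Lemma~\ref{zzlem-new3-4.9}, $\Com^{\{w\}}_F=G_{\SG\triv}(F[t])$ with $\deg t=w$. Likewise $\Ope^{\{2w\}}_F=G_{\AG\triv}(F[t])$ with $\deg t=2w$, where $F[t]$ is regarded as a PGC algebra of odd type. By Lemma~\ref{zzlem-new3-3.9}, $F[t]$ is saturated as an $\mathbb{N}$-graded algebra, both over $F$ and over $\Bbbk$.

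\emph{The $\Com$ case.} Here $F[t]$ is commutative, so Theorem~\ref{zzthm-new-4.2}(i) shows that $\Com^{\{w\}}_F$ is a saturated $\SG\triv$ operad. I need to check that Theorem~\ref{zzthm-new-4.2} is also valid over $\Bbbk$ when $F[t]$ is viewed as a $\Bbbk$-algebra with $A_0=F$. The proof only uses the functors and \cite[Corollary 3.10]{LQXZ25}, which do not need connectedness, so I expect this to go through.

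\emph{The $\Ope$ case.} This needs ${\text{char}}\,\Bbbk\neq 2$, which is the standing convention for odd type. Lemma~\ref{n-sat-pgc-sat} upgrades saturation of $F[t]$ as an $\mathbb{N}$-graded algebra to saturation as a PGC algebra. Theorem~\ref{zzthm-new-4.5}(i) then shows that $\Ope^{\{2w\}}_F$ is a saturated operad.

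There is one subtlety in Definition~\ref{zzdef-new-3.3}: the test operad $\QQ$ is only assumed left torsionfree with $\QQ_{\{w'\}}\cong\PP_{\{w'\}}$. It must first be shown to be $\AG\triv$, which the proof of Theorem~\ref{zzthm-new-4.5} does via \cite[Lemma 3.13(2)]{LQXZ25}. So the corresponding algebra is a PGC algebra and the PGC saturation applies.

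For part (2), let $\PP$ be a finitely generated prime operad of GK-dimension~$1$.
\begin{enumerate}
\item[(a)] By Lemma~\ref{zzlem-new3-4.10}, $\PP$ is left torsionfree and $\AG\triv$.
\item[(b)] By Theorem~\ref{zzthm5.1}(2) and its proof, $\PP=G(A)$, where $G$ is $G_{\SG\triv}$ or $G_{\AG\triv}$. Here $A$ is a prime commutative algebra, or a prime PGC algebra, of GK-dimension~$1$ with $A\subseteq F[t]$ for some finite extension $F/\Bbbk$.
\item[(c)] By Lemma~\ref{lem-quo-ring}(2),(3), taking $t$ of minimal positive degree in the graded quotient ring $F[t^{\pm1}]$, we get $A_n=(F[t])_n$ for $n\gg0$. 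Hence $A_{\{w'\}}=F[t]_{\{w'\}}$ for large $w'$.
\item[(d)] Applying $G$, the inclusion $\PP\to G(F[t])$ is an isomorphism in large arity. By Lemma~\ref{zzlem-new3-4.9}, $G(F[t])$ is $\Com^{\{w\}}_F$ (even type, or ${\text{char}}\,\Bbbk=2$) or $\Ope^{\{2m\}}_F$ (odd type).
\item[(e)] By part (1), $G(F[t])$ is saturated. Condition (2a) of Definition~\ref{zzdef-new-3.3} is exactly the large-arity isomorphism from (d), so $\PP\to G(F[t])$ is a saturation.
\end{enumerate}

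The main obstacle is step (c), since Lemma~\ref{lem-quo-ring} is stated with $B_0=F$ possibly larger than $A_0$. I need $A_n=(F[t])_n$ in all large degrees, not just in degrees divisible by $\deg x$. I will obtain this from Lemma~\ref{lem-quo-ring}(3), which holds because $A$ is finitely generated (Theorem~\ref{zzthm5.1}(2a)). I also need to verify that the PGC type of $F[t]$ is compatible with that of $A$, which holds because $A$ is of a single type (Proposition~\ref{zzpro-new3-4.8}).
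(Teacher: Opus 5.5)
Your proposal is correct and follows essentially the paper's proof. Part (1) uses the saturation of $F[t]$ (Lemma~\ref{zzlem-new3-3.9}) transported through Theorems~\ref{zzthm-new-4.2} and~\ref{zzthm-new-4.5}. Part (2) writes $\PP=G(A)$ with $A_n=(F[t])_n$ for $n\gg 0$ via Lemma~\ref{lem-quo-ring} and then invokes part (1). Your extra care, using Lemma~\ref{n-sat-pgc-sat} to get PGC-saturation and taking $F,t$ from the graded quotient ring, only makes explicit what the paper leaves implicit. One small addition for part (1): when ${\text{char}}\;\Bbbk=2$, $\Ope^{\{2w\}}_F$ coincides with $\Com^{\{2w\}}_F$, so that case is covered by your $\Com$ argument.
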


\begin{proof}
(1) By Lemma \ref{zzlem-new3-3.9}, the prime PGC algebra $F[t] $ of even or odd type is saturated.  Then by Theorem~\ref{zzthm-new-4.2} (if $F[t]$ is of even type or ${\text{char}}\; \Bbbk=2$) or by~Theorem~\ref{zzthm-new-4.5} (if $A$ is of odd type and ${\text{char}}\; \Bbbk\neq 2$), we deduce that~$\Com^{\{w\}}_{F}=G_{\SG\triv}(F[t])$ and~$\Ope^{\{2w\}}_F=G_{\AG\triv}(F[t])$ are saturated (both as an $F$-operad and a $\Bbbk$-operad).

 (2) Let $\PP$ be a finitely generated prime operad of
GK-dimension 1. Then by Theorem~\ref{zzthm-new-2.4}(2) and Lemma~\ref{zzlem-new3-4.10},
$\PP$  is left torsionfree and $\AG\triv$. By the proof of Theorem~\ref{zzthm5.1}, we have $\PP=G_{\SG\triv}(A)$ for a prime commutative graded
algebra $A$ if~${\text{char}}\; \Bbbk=2$;  and  $\PP=G_{\AG\triv}(A)$ for a prime PGC algebra $A$  of odd or even type such that $A$ is also commutative if ${\text{char}}\; \Bbbk\neq 2$; Moreover, in both cases, $A$ is a finitely generated graded $\Bbbk$-subalgebra of~$F[t]$ for some finite field extension~$F$ of~$\Bbbk$. It follows that~$A$ is a domain of GK-dimension~1, and thus by Lemma~\ref{lem-quo-ring}, we have~$A_n=(F[t])_n$ for~$n\gg 0$. By Lemma~\ref{zzlem-new3-3.9}, it follows that $F[t]$ is the
saturation of $A$. By \cite[Theorems 3.9, 4.11]{LQXZ25},  $\PP$ is a suboperad of~$\QQ:=\Com^{\{w\}}_{F}=G_{\SG\triv}(F[t])$ (or~$\QQ:=\Ope^{\{2w\}}_{F}=G_{\AG\triv}(F[t])$) such that $\PP(n)=\QQ(n)$ for~$n\gg 0$. The result follows by the above reasoning and by the fact that $\QQ$ is saturated. 
\end{proof}

\subsection{Classification of semiprime saturated operads of
GK-dimension 1}
\label{zzsec5.2}

In this subsection we study saturated operads of GK-dimension 1
that are semiprime. The following result is a slight generalization
of Theorem \ref{zzthm0.4}.

\begin{theorem}
\label{zzthm-new3-5.4} 
Let $\PP$ be a finitely generated semiprime saturated operad of
GK-dimension 1. Then $\PP$ is isomorphic to a finite direct sum
of operads of the form $\Com^{\{w\}}_{F}$ or $\Ope^{\{2w\}}_{F}$,
where  $F$ is a finite field extension of $\Bbbk$ and  $w$ is a positive integer.
\end{theorem}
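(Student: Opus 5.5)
We embed $\PP$ into a finite direct sum of saturations of prime quotients, show the embedding is an isomorphism in high arity, and then use saturation of $\PP$ to conclude it is the whole sum.

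\textbf{Step 1: finitely many prime ideals suffice.} By Theorem~\ref{zzthm-new-2.4}(2), $\PP$ is almost $\AG\triv$. By Theorem~\ref{zzthm0.5} (when ${\text{char}}\;\Bbbk\neq 2$, and via the $\SG\triv$ equivalence of \cite{LQXZ25} otherwise), $\PP$ is uniformly bounded, say $\dim\PP(n)\le N$ for all $n$. Since $\PP$ is semiprime, the intersection of all prime ideals is $0$. A prime ideal $I$ with $\PP/I$ finite dimensional has $I\supseteq \PP_{\geq n}$ for some $n$. By uniform boundedness, at most $N$ prime ideals $I_1,\dots,I_r$ with $\PP/I_j$ infinite dimensional are needed before their intersection $K$ is finite dimensional in high arities. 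I would make this precise by choosing minimal primes and comparing $\dim$ in large arity; I expect this counting to be the main obstacle, and the fallback is right noetherianity (Theorem~\ref{zzthm0.5}(ii)) to get finitely many minimal primes. Then $K\cap\PP_{\geq n}=0$ for $n\gg0$, since any extra finite-dimensional primes only constrain low arity. So $K$ is finite dimensional, and a saturated operad is left torsionfree. By Lemma~\ref{zzlem-new-3.2}(3), a finite-dimensional ideal lies in $\tau^l(\PP)=0$, hence $K=0$.

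\textbf{Step 2: embedding into a sum of saturations.} Each $\PP/I_j$ is a finitely generated prime operad of GK-dimension $\le 1$. The infinite-dimensional ones have GK-dimension exactly $1$. By Corollary~\ref{zzcor-new3-5.3}(2), each has saturation $\QQ_j=\Com^{\{w_j\}}_{F_j}$ or $\Ope^{\{2w_j\}}_{F_j}$, and $\PP/I_j\to\QQ_j$ is an isomorphism in arities $\ge$ some $w$. Composing gives an injective map
$$\Phi:\PP\to \bigoplus_j \PP/I_j\to\bigoplus_j\QQ_j=:\QQ.$$
By Proposition~\ref{zzpro-new-3.4}, $\QQ$ is saturated; left noetherianity of each $\QQ_j$ holds since they correspond to $F[t]$.

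\textbf{Step 3: $\Phi$ is an isomorphism in high arity.} Discard redundant $I_j$ so that the primes are pairwise incomparable and irredundant. Then I would show $\Phi$ is surjective in high arity, i.e.\ a Chinese-remainder statement: $I_j+\bigcap_{k\neq j}I_k$ agrees with $\PP$ in high arity. The approach is that $(\bigcap_{k\ne j} I_k)\circ I_j\subseteq K=0$. Primeness of $\PP/I_j$, together with $\bigcap_{k\neq j}I_k\not\subseteq I_j$, gives that the image of $\bigcap_{k\neq j}I_k$ in $\PP/I_j$ is a nonzero ideal of an operad almost equal to $\Com^{\{w\}}_F$ or $\Ope^{\{2w\}}_F$. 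Such an ideal contains all components in high arity, because in $F[t]$ any nonzero graded ideal of a subalgebra agreeing with $F[t]$ in high degree contains $t^{n}F[t]$ up to cofinite degree. Hence $\Phi_{\{w'\}}:\PP_{\{w'\}}\to\QQ_{\{w'\}}$ is an isomorphism for some $w'$.

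\textbf{Step 4: conclude.} Since $\PP$ is saturated and $\QQ$ is left torsionfree, the isomorphism $\Phi_{\{w'\}}^{-1}$ extends to an injection $\QQ\to\PP$. Since $\QQ$ is saturated, $\Phi$ is injective. Comparing the two injections on the dense part $\PP_{\{w'\}}$, and using the cancellation property from Lemma~\ref{saturated-condi}, they are mutually inverse. Hence $\PP\cong\bigoplus_j\QQ_j$.
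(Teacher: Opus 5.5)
Your Steps 2--4 follow the paper's argument closely. The ingredients are the same: the injection $\PP\to\bigoplus_j\PP/I_j$, the Chinese-remainder almost isomorphism \eqref{E5.8.1} coming from Lemma~\ref{zzlem-new3-5.6}, the saturations from Corollary~\ref{zzcor-new3-5.3}, saturatedness of the sum via Lemma~\ref{zzlem-new3-5.5} and Proposition~\ref{zzpro-new-3.4}, and finally saturatedness of $\PP$. The genuine gap is Step 1. That is where the finiteness of the decomposition comes from, and you leave it open.

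First, nothing you wrote shows that only finitely many primes $I$ have $\GKdim(\PP/I)=1$. ``At most $N$ primes are needed'' is asserted, not proved. The fallback (right noetherian $\Rightarrow$ finitely many minimal primes) is not available in the paper: it would have to be proved for operads, and Theorem~\ref{zzthm0.5}(ii) is only stated for ${\text{char}}\,\Bbbk\neq 2$.

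Second, on your primary route the sentence ``extra finite-dimensional primes only constrain low arity'' is unjustified. Each such $J$ contains some $\PP_{\geq n_J}$, but there may be infinitely many such $J$. Then $\bigcap_J J$ need not contain any tail $\PP_{\geq n}$, and you cannot conclude that $K$ is finite dimensional.

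The paper closes both points as follows.

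(i) A finite-dimensional prime operad vanishes in arities $\geq 2$. If $m\geq 2$ were its top nonzero arity, the arity-$m$ component would be a nonzero ideal $L$ with $L\circ L=0$, since $L\circ L$ lives in arity $2m-1>m$. So every prime $J$ with $\PP/J$ finite dimensional contains $\PP_{\geq 2}$, uniformly in $J$.

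(ii) This is Lemma~\ref{zzlem-new3-5.8}. Distinct primes $I_1,\dots,I_r$ with quotients of GK-dimension 1 are pairwise incomparable by Lemma~\ref{zzlem-new3-5.6}. So exactly your Step 3 argument gives an almost isomorphism $\PP/\bigcap_i I_i\to\bigoplus_i \PP/I_i$. Each $\PP/I_i$ agrees in high arity with some $\Com^{\{w_i\}}_{F_i}$ or $\Ope^{\{2w_i\}}_{F_i}$. All of these are nonzero in arities $n\equiv 1$ modulo a common multiple of the $w_i$ and $2w_i$. Hence $\dim\PP(n)\geq r$ for infinitely many $n$, and $r\leq \mathfrak m(\PP)\leq N$.

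Now take all such primes and set $I=\bigcap_i I_i$. Semiprimeness together with (i) gives $I\cap\PP_{\geq 2}=0$. Hence $\PP_{\geq 2}\circ I=0$, so $I\subseteq\tau^l(\PP)=0$.

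In short, the CRT argument you placed in Step 3 should be run first, on an arbitrary finite family of GK-dimension-1 primes. That does the counting you flagged as the main obstacle, and it must be combined with the uniform bound (i).
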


 Note that, in the direct sum in Theorem \ref{zzthm-new3-5.4}, the fields $F$ (resp. $w$) in different summands are not necessarily the same. We need a few easy lemmas. 

\begin{lemma}
\label{zzlem-new3-5.5}
A finite direct sum of operads of the form $\Com^{\{w\}}_{F}$ or $\Ope^{\{2w\}}_{F}$ is saturated  as an operad over~$\Bbbk$,
where $F$ is a finite field extension of $\Bbbk$ and $w$ is a positive integer.
\end{lemma}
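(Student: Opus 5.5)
The plan is to combine Corollary \ref{zzcor-new3-5.3}(1) with Proposition \ref{zzpro-new-3.4}(1), using induction on the number of summands. Each summand $\Com^{\{w\}}_{F}$ or $\Ope^{\{2w\}}_{F}$ is saturated as a $\Bbbk$-operad by Corollary \ref{zzcor-new3-5.3}(1). Proposition \ref{zzpro-new-3.4}(1) then says that a direct sum of two saturated operads is saturated, provided both are left noetherian. So the only hypothesis still to be verified is left noetherianity of each summand, and of each partial direct sum used in the induction.

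For left noetherianity I would argue as follows. Each summand $\PP_i$ is finitely generated and has $\dim_{\Bbbk}\PP_i(n)\leq [F:\Bbbk]$ for all $n$. Hence it is uniformly bounded, and it is nonzero only in arities $\equiv 1 \bmod w$ (resp. $\bmod 2w$). Let $I$ be a left ideal. Then $\PP_i\circ I\subseteq I$, and left composition by $\mu_{pw+1}$ (resp. $\mu_{2pw+1}$) with coefficients in $F$ acts injectively and surjectively from $\PP_i(n)$ onto $\PP_i(n+pw)$, since $f_1t^p\circ_j f_2t^q=f_1f_2t^{p+q}$. So $I(n)$ determines $I(n')$ for all $n'\geq n$ in the same residue class, and $I(n)\subseteq I(n+w)$ under the identification $\PP_i(n)\cong F$. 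An ascending chain of left ideals is therefore determined by finitely many $F$-subspaces, each of bounded dimension in each residue class, and such a chain stabilizes. Alternatively, I could cite \cite[Lemma 2.2]{LQXZ25} / Theorem \ref{zzthm0.5}(ii) style results, but the direct argument is cleaner and holds in every characteristic.

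For a finite direct sum $\PP_1\oplus\cdots\oplus\PP_r$, left ideals need not split as direct sums, so I would argue differently. A left ideal $I$ of the sum gives the left ideal $I\cap \PP_r$ of $\PP_r$ and the image of $I$ in $\PP_1\oplus\cdots\oplus\PP_{r-1}$, which is a left ideal of that operad because $\PP_r$ composes to zero against the other summands. Noetherianity is stable under such extensions by the usual argument, so the sum is left noetherian by induction. Then I apply Proposition \ref{zzpro-new-3.4}(1) inductively to $(\PP_1\oplus\cdots\oplus\PP_{r-1})\oplus\PP_r$.

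The main obstacle is this left noetherian verification, in particular making sure that the saturation test in Proposition \ref{zzpro-new-3.4} is performed over $\Bbbk$ and not over $F$. I would handle that by observing that all arguments only use $\Bbbk$-linear structure and the finiteness $[F:\Bbbk]<\infty$.
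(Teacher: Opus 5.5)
Your proposal is correct and follows the paper's route. Each summand is saturated over $\Bbbk$ by Corollary~\ref{zzcor-new3-5.3}(1), the summands and partial sums are left noetherian, and Proposition~\ref{zzpro-new-3.4}(1) is applied inductively.

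The differences are minor. You check left noetherianity directly from the rule $f_1t^p\circ_j f_2t^q=f_1f_2t^{p+q}$, whereas the paper identifies left ideals of $\PP_i$ with ideals of the noetherian ring $F_i[t_i]$. You also spell out the passage to finite direct sums, which the paper leaves implicit. Your remark that left ideals of the sum need not split is wrong: the arity-one element $(1_{\PP_1},0,\dots,0)$ projects any left ideal onto its first component, so they do split. This does not affect your extension argument, which remains valid.
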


\begin{proof}
 Note that if~${\text{char}}\; \Bbbk= 2$, then $\Ope^{\{2w\}}_{F}=\Com^{\{2w\}}_{F}$. We just prove for the case when~${\text{char}}\; \Bbbk\neq 2$ because the proof for the other case is similar.

Suppose ${\text{char}}\; \Bbbk\neq 2$. 
Let~$A_i=F_i[t_i]$ be a prime PGC algebra of odd or even type satisfying the following: (i) $\deg(t)=w_i$ if $A$ is of even type; (ii) $\deg(t)=2w_i$ if $A$ is of odd type. Then $\PP_i:=G_{\AG\triv}(A_i)$ is~$\Com^{\{w_i\}}_{F_i}$ or $\Ope^{\{2w_i\}}_{F_i}$ by Lemma~\ref{zzlem-new3-4.9}. 
Since each~$A_i$ is a finitely generated commutative $\Bbbk$-algebra and every left ideal of~$\PP_i$ corresponds to an ideal of $A_i$, we deduce that $\PP_i$ is left noetherian.
Therefore, 
$\PP_1\oplus \PP_2\oplus \cdots \oplus\PP_m$ is left noetherian for all~$m\geq 1$. By induction on $n$ and by Propositon~\ref{zzpro-new-3.4}, $\PP_1\oplus \PP_2\oplus \cdots \oplus\PP_n$ is saturated.
\end{proof}

Note that the operad $\Com_{\{w\}}$ with $w\geq3$ is prime with GK-dimension 1, but it is not saturated.

\begin{lemma}
\label{zzlem-new3-5.6}
Let $\PP$ be a finitely generated operad of GK-dimension
1. Let $I$ be a prime ideal of $\PP$ and $J$ be
an ideal strictly containing $I$. Then $\PP/J$ is
finite dimensional.  
\end{lemma}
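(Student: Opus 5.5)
The plan is to pass to the prime quotient $\PP/I$ and use the classification of Section 4. Since $\PP$ is finitely generated, so is $\PP/I$. Its GK-dimension is at most $1$, because $\dim (\PP/I)(n)\le \dim\PP(n)$. If $\PP/I$ is finite dimensional, then $\PP/J$, being a quotient of $\PP/I$, is finite dimensional and there is nothing to prove. So assume $\PP/I$ is infinite dimensional. It is then a finitely generated prime operad of GK-dimension exactly $1$, since an infinite-dimensional locally finite operad has GK-dimension at least $1$ (each nonzero $\PP(n)$ contributes at least $1$ to the sum). Replacing $\PP$ by $\PP/I$ and $J$ by $J/I$, it suffices to show the following: if $\PP$ is a finitely generated prime operad of GK-dimension $1$ and $J\neq 0$ is an ideal, then $\PP/J$ is finite dimensional.

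By Lemma \ref{zzlem-new3-4.10}, $\PP$ is left torsionfree and $\AG\triv$. By the proof of Theorem \ref{zzthm5.1} (with \cite[Corollary 5.7]{LQXZ25} when ${\rm char}\,\Bbbk\neq 2$, or \cite[Corollary 3.10(2)]{LQXZ25} when ${\rm char}\,\Bbbk=2$), we can write $\PP=G_{\AG\triv}(A)$ or $G_{\SG\triv}(A)$. Here $A$ is a finitely generated graded domain of GK-dimension $1$ which is a $\Bbbk$-subalgebra of $F[t]$, by Propositions \ref{zzpro-new3-4.7} and \ref{zzpro-new3-4.8}. We have $\PP(n)=A_{n-1}$ as vector spaces. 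The key step is to check that the ideal $J$ of $\PP$ corresponds to a nonzero two-sided ideal $J_A$ of $A$, with $J(n)=(J_A)_{n-1}$. Indeed, $J$ is closed under $\circ_1$ on both sides, so $\bigoplus_n J(n+1)$ is an ideal of $A$ under the product $x\cdot y=x\circ_1 y$.

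Next pick a nonzero homogeneous $x\in J_A$. Since $A$ is a domain, $x$ is a nonzerodivisor. I would then argue as in Lemma \ref{lem-quo-ring}(3): $\GK(A/xA)=0$, because $\dim (xA)_n=\dim A_{n-\deg x}$ and $\dim A_n$ is uniformly bounded and eventually periodic, by the rationality of the Hilbert series (Proposition \ref{zzpro-new3-4.7}(2)). Together with finite generation, this shows that $A/xA$ is finite dimensional. Hence $A/J_A$ is finite dimensional, so $J(n)=\PP(n)$ for $n\gg 0$, and therefore $\PP/J$ is finite dimensional.

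The main obstacle is making the "$\GK(A/xA)=0$ implies finite dimensional" step rigorous. This is where Lemma \ref{lem-quo-ring}(3) already asserts it. Alternatively, one can argue directly: by Lemma \ref{lem-quo-ring}(3), $A_n=(F[t])_n$ for $n\gg 0$, and every nonzero homogeneous element of $F[t]$ is invertible in $F[t,t^{-1}]$. So $xA_{n-\deg x}=(F[t])_n=A_n$ for large $n$, which gives $\PP(n)=J(n)$ for $n\gg 0$.
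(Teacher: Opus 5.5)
Your proof is correct and follows the paper's proof. You reduce to $I=0$ and write the prime quotient as $G_{\AG\triv}(A)$ or $G_{\SG\triv}(A)$ for a commutative graded domain $A$ of GK-dimension $1$. A nonzero ideal of $\PP$ then gives a nonzero ideal of $A$ containing a regular homogeneous element $x$ (which may be taken of positive degree) with $A_n=xA_{n-\deg x}$ for $n\gg 0$, by Lemma \ref{lem-quo-ring}(3). This supplies the detail behind the paper's one-line claim that $A/I'$ is finite dimensional.

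One justification needs repair. Your parenthetical argument that an infinite-dimensional locally finite operad has GK-dimension $\geq 1$ is false in general: an operad that is nonzero only in very sparse arities, with all nontrivial compositions zero, has GK-dimension $0$. The conclusion does hold for the finitely generated operad $\PP/I$. Let $t$ bound the arities of the generators. Removing a top generator from a nonzero tree monomial leaves a nonzero tree monomial whose arity drops by at most $t-1$. Hence every block of $t-1$ consecutive arities contains a nonzero component, which gives GK-dimension $\geq 1$.
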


\begin{proof}
Without loss of generality, we can assume that $I=0$ and
$\PP$ is prime.  By Theorem~\ref{zzthm-new-2.4}(2), $\PP$ is almost $\AG\triv$.
By the proof of  Theorem~\ref{zzthm5.1}, we have~$\PP=G_{\AG\triv}(A)$ if~${\text{char}}\; \Bbbk\neq 2$; and~$\PP=G_{\SG\triv}(A)$ if~${\text{char}}\; \Bbbk=2$,
 for some prime commutative graded algebra~$A$ of GK-dimension 1. In particular, $A$ is a domain. So every nonzero element in $A$ is regular.
 It follows that $A/I'$ is finite dimensional for every nonzero ideal $I'$.
This implies that $\PP/J$ is finite dimensional for every nonzero ideal $J$ of $\PP$ as desired.
\end{proof}

The following invariant is convenient for the proof of
Theorem \ref{zzthm-new3-5.4} and probably useful for studying
operads of higher GK-dimension. Suppose  $\PP$ has an integral
GK-dimension $h\geq 1$. The {\it multiplicity} of
$\PP$ is defined to be
\begin{equation}
\label{E5.6.1}\tag{E5.6.1}
{\mathfrak m}(\PP)
:=(h-1)! \limsup_{n\to\infty} \frac{\dim \PP(n)}{n^{h-1}}.
\end{equation}
In this paper we only consider the case when $h=1$ (and when operads are locally finite). 
Then 
$${\mathfrak m}(\PP)=\limsup_{n\to\infty} \dim \PP(n)$$
and it 
is finite if and only if $\dim \PP(n)$ is uniformly
bounded.  
For instance, let $F$ be a finite field extension of~$\Bbbk$ and let~$\PP=\Com_{F}$. Then we have ${\mathfrak m}(\PP)=\dim_{\Bbbk} F$.
If $\PP$ is of GK-dimension 0, by convention, we set~${\mathfrak m}(\PP)=0$.
 It is easy to construct two finitely generated operads
$\PP$ and $\QQ$ of GK-dimension 1 (or higher) such that
${\mathfrak m}(\PP\oplus \QQ)< {\mathfrak m}(\PP) +
{\mathfrak m}(\QQ)$.  For instance, let~$A=\Bbbk[x]$ with $\deg x=2$ and $\PP=G_{\SG\triv}(A)$. Then ${\mathfrak m}(\PP)=1$. Let $B$ be the commutative ring $\Bbbk[x]\oplus (\oplus_{i=1}^{3} a_i \Bbbk[x])$ with $\deg x=2$ and
$\deg a_i=1$ for $i=1,2,3$. The multiplication of $B$ is determined by $a_i a_j=0$ for all $i,j$. 
Let $\QQ=G_{\SG\triv}(B)$. Then ${\mathfrak m}(\QQ)=3$. It is easy to see that
$${\mathfrak m}(\PP\oplus \QQ)=3<4={\mathfrak m}(\PP)+{\mathfrak m}(\QQ).$$ 

\begin{lemma}
\label{zzlem-new3-5.7}
Let $\PP$, $\PP_i$, and $\QQ$ be operads of GK-dimension at most 1.
\begin{enumerate}
\item[(1)]
If $\PP$ is a suboperad of $\QQ$, then ${\mathfrak m}(\PP)
\leq {\mathfrak m}(\QQ)$.
\item[(2)]
If $\PP$ is a quotient operad of $\QQ$, then ${\mathfrak m}(\PP)
\leq {\mathfrak m}(\QQ)$.
\item[(3)]
If $\PP$ is almost isomorphic to $\QQ$, then ${\mathfrak m}(\PP)
={\mathfrak m}(\QQ)$.
\item[(4)] If
$\PP$ is finitely generated,
then ${\mathfrak m}(\PP)$ is a positive integer.
\item[(5)]  If
$\PP_i$ for $i=1,2,\cdots, n$ are finitely generated prime
operads, then
$${\mathfrak m}(\PP_1\oplus \cdots \oplus \PP_n)
=\sum_{i=1}^n {\mathfrak m}(\PP_i). $$
\item[(6)]  Every finitely generated operad of GK-dimension 1 maps surjectively to a prime operad of GK-dimension 1.  Moreover, if
$\PP_i$ for $i=1,2,\cdots, n$ are finitely generated operads of
GK-dimension 1, then
$${\mathfrak m}(\PP_1\oplus \cdots \oplus \PP_n)
\geq n.$$
\end{enumerate}
\end{lemma}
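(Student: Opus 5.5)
Parts (1)--(3) are direct consequences of the definition ${\mathfrak m}(\PP)=\limsup_n \dim \PP(n)$ when $h=1$; operads of GK-dimension $0$ are finite dimensional and have ${\mathfrak m}=0$ by convention. For (1) and (2), an injection or a surjection gives $\dim\PP(n)\le \dim\QQ(n)$ for each $n$, and passing to the $\limsup$ gives the claim. The only point to check is the mixed case: if $\QQ$ has GK-dimension $1$ and $\PP$ has GK-dimension $0$, then ${\mathfrak m}(\PP)=0$ and the inequality is trivial. For (3), $\PP_{\{w\}}\cong \QQ_{\{w\}}$ gives $\dim\PP(n)=\dim\QQ(n)$ for $n\ge w$, so the $\limsup$'s agree. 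If one of the two has GK-dimension $0$, it is finite dimensional, and then so is the other.

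For (4), a finitely generated operad of GK-dimension $1$ has rational Hilbert series and is uniformly bounded. In characteristic $\ne 2$ this is Theorem \ref{zzthm0.5}(i). In characteristic $2$ I would first use Theorem \ref{zzthm-new-2.4}(2), which makes $\PP$ almost $\AG\triv$, and then argue as in the proof of Theorem \ref{zzthm0.5}. So $\dim\PP(n)$ is a bounded sequence of nonnegative integers. Being eventually quasi-polynomial of degree $0$ (by the rationality argument of Proposition \ref{zzpro-new3-4.7}(2d)$\Rightarrow$(2c)), it is eventually periodic, so its $\limsup$ is the maximum over a period. That maximum is a nonnegative integer, and it is nonzero because $\PP$ is infinite dimensional.

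For (5), since each $\PP_i$ is prime and finitely generated, Theorem \ref{zzthm5.1}(2) embeds $\PP_i$ into $\Com_{F_i}$ or $\Ope_{F_i}$. By Corollary \ref{zzcor-new3-5.3}(2), $\PP_i(n)$ coincides for $n\gg0$ with $\Com^{\{w_i\}}_{F_i}(n)$ or $\Ope^{\{2w_i\}}_{F_i}(n)$. Hence $\dim\PP_i(n)=\dim_\Bbbk F_i$ when $n\equiv 1 \pmod{w_i}$ (resp.\ $\pmod{2w_i}$), and $\dim\PP_i(n)=0$ otherwise. Take $L$ to be a common multiple of all the periods, and take $n\equiv 1\pmod L$ large. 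Then every summand attains its maximum simultaneously, so $\limsup_n\sum_i\dim\PP_i(n)=\sum_i \dim_\Bbbk F_i=\sum_i{\mathfrak m}(\PP_i)$. The inequality $\le$ is automatic, since the $\limsup$ of a sum is at most the sum of the $\limsup$'s. Finite-dimensional prime summands, if any, contribute $0$ on both sides.

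For (6), I would use a Zorn/noetherian argument to find a prime quotient of GK-dimension $1$. Let $\PP$ be finitely generated of GK-dimension $1$. Right noetherianity (Theorem \ref{zzthm0.5}(ii)) does not obviously apply to two-sided ideals in characteristic $2$, so I would instead take an ideal $I$ maximal with respect to $\dim \PP/I=\infty$. Such an $I$ exists by Zorn's lemma: a union of a chain of ideals with infinite-dimensional quotients still has infinite-dimensional quotient. The reason is that $\PP$ is finitely generated, so a cofinite ideal is generated in bounded arity, and those finitely many generators would already lie in a member of the chain. Then $\PP/I$ is just-infinite. The main obstacle is showing that a just-infinite finitely generated operad of GK-dimension $1$ is prime. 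Suppose $J,K\supsetneq I$ with $J\circ K\subseteq I$. Then $\PP/J$ and $\PP/K$ are finite dimensional, so $J$ and $K$ contain $\PP_{\ge m}$ for some $m$. Hence $J\circ K$ contains $\PP_{\ge m}\circ\PP_{\ge m}$. Using that $\PP$ is finitely generated and $\PP(1)$ acts, I would argue that $\PP_{\ge m}\circ\PP_{\ge m}$ is cofinite, contradicting $\dim\PP/I=\infty$. If cofiniteness fails, I would fall back on the almost $\AG\triv$ structure, passing to the associated PGPerm algebra, where the analogous statement for just-infinite graded algebras is standard. The resulting quotient $\PP/I$ has GK-dimension $1$ because it is infinite dimensional and a quotient of $\PP$. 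Finally, $\PP_1\oplus\cdots\oplus\PP_n$ surjects onto $\bigoplus_i \PP_i/I_i$, a direct sum of finitely generated prime operads. By (2), (5) and (4), its multiplicity is at least $\sum_i {\mathfrak m}(\PP_i/I_i)\ge n$.
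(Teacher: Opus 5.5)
Parts (1)--(3), part (5), and the Zorn step and final count in (6) follow the paper. For (5) that means passing to $\Com^{\{w\}}_F$ or $\Ope^{\{2w\}}_F$ in large arity via Corollary~\ref{zzcor-new3-5.3} and summing over a common residue class.

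\textbf{The main gap is the decisive step of (6).} You correctly reduce primeness of $\PP/I$ to showing that $\PP_{\geq m}\circ\PP_{\geq m}$ contains $\PP_{\geq N}$ for some $N$. But you only say you \emph{would} argue this ``using that $\PP$ is finitely generated and $\PP(1)$ acts'', with a fallback if it fails. The action of $\PP(1)$ plays no role. The fallback does not work as stated: $\PP/I$ is only almost $\AG\triv$, and the paper upgrades almost $\AG\triv$ to $\AG\triv$ through left torsionfreeness, i.e.\ essentially through primeness, which is what you are trying to prove. The PGPerm dictionary also needs ${\text{char}}\,\Bbbk\neq 2$.

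The missing idea, which the paper uses, is a cutting argument for tree monomials:
\begin{itemize}
\item Let the generators have arity at most $M$, take $m\geq 2$, and let $T$ be a tree monomial of arity $n\geq (M+1)m$.
\item Walk up from the root, always into the child subtree with the most leaves. The leaf count drops by a factor of at most $M$ per step. So the first vertex whose subtree has fewer than $Mm$ leaves still has at least $m$ leaves, and it is not the root.
\item Cutting the edge below it gives $T=\mu_1\circ_q\mu_2$ with $\Ar(\mu_2)\geq m$ and $\Ar(\mu_1)=n-\Ar(\mu_2)+1> m$.
\item $\PP(n)$ is spanned by $\SG_n$-translates of such $T$, and $J\circ K$ is $\SG$-stable. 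Hence $\PP_{\geq (M+1)m}\subseteq J\circ K\subseteq I$, contradicting $\dim \PP/I=\infty$.
\end{itemize}

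\textbf{There is a second gap in (4) when ${\text{char}}\,\Bbbk=2$.} ``Argue as in the proof of Theorem~\ref{zzthm0.5}'' is not available there. That proof goes through Corollary~0.4 of [LQXZ25], which assumes ${\text{char}}\,\Bbbk\neq 2$; the paper says so explicitly after Theorem~\ref{zzthm0.5}. The paper's own ``easy'' for (4) does not address this case either.

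All you need is that $\dim\PP(n)$ is bounded. A bounded sequence of nonnegative integers that is nonzero infinitely often has a positive integer $\limsup$, so the periodicity detour is unnecessary. Boundedness does hold in every characteristic, using Section~\ref{zzsec-gap}:
\begin{itemize}
\item By Proposition~\ref{zzpro-new-2.3}(2), an irreducible tree monomial of large arity is determined by boundedly many data. There are finitely many choices for $v_1$, $v_3$, $x$ and the attaching positions, and $v_2$ is fixed by one period together with the total arity. So $\dim\QQ(n)$ is bounded.
\item Then \eqref{E2.2.1} gives $\dim\PP(n)\leq 2\dim\QQ(n)$. This uses that the proof of Theorem~\ref{zzthm-new-2.1} makes $\AG_n$ act trivially on $\QQ(n)$ for $n\gg 0$ in all characteristics.
\end{itemize}

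A minor point: without finite generation, GK-dimension $0$ does not imply finite dimensionality. However, (1)--(3) only need that GK-dimension does not increase under suboperads or quotients and is unchanged under almost isomorphism.
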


\begin{proof} (1-4) Easy.

(5) By part (3) and Corollary \ref{zzcor-new3-5.3}, we may assume
that each $\PP_i$ is of the form $\Com^{\{w\}}_{F}$ or
$\Ope^{\{2w\}}_{F}$ where $F$ is a finite field extension of $\Bbbk$. In this case, the assertion follows from a
direct computation.

(6) It suffices to show that every $\PP_i$
maps surjectively to a prime operad of GK-dimension 1, since then the second statement follows from parts~(2) and~(5).  Define
$$\Omega=\{I\unlhd \PP \mid \GKdim (\PP/I )=1\}.$$ Assume that~$\PP$ is generated by~$\{x_1, \dots, x_m\}$ such that~$\Ar(x_i)< M $ for all $1\leq i\leq m$.
Then clearly~$\{0\}\in \Omega$. Let~$I_1\subseteq I_2\subseteq \dots \subseteq I_n\subseteq \dots$ be an ascending chain of elements in~$\Omega$ and set~$I=\cup_{t\geq 1} I_t$.  We claim that~$I$ lies in $\Omega$.  Assume to the contrary that $I\not\in \Omega$ or equivalently $\GKdim (\PP/I )=0$. Then there exists an integer~$N>0$ such that~$\PP(n)=I(n)$ for all~$n\geq N$. Since $\PP$ is locally finite, there exists an integer $t>0$ such that for every~$f\in \PP(n)$ with~$N\leq n\leq N+M$, we have~$f\in I_t(n)$, and since $\PP$ is generated by~$\{x_1, \dots, x_m\}$ such that~$\Ar(x_i)< M $ for all $1\leq i\leq m$, ~$I_t(n)=\PP(n)$ for all~$n\geq N$, which contradicts the fact that~$I_t\in \Omega$. So we have~$I\in \Omega$.
By the Zorn's lemma, there exists a maximal element $I_{\max}$ in~$\Omega$.
Now we show that~$I_{\max}$ is prime. Assume~$I_{\max} \subsetneq J_1, J_2\unlhd \PP$ and~$J_1\circ J_2\subseteq I_{\max}$.
Since~$I_{\max}$ is a maximal element in~$\Omega$, we obtain~$\GKdim(P/J_1)=\GKdim(P/J_2)=0$, and thus there exists an integer~$N'>0$ such that~$\PP(n)=J_1(n)=J_2(n)$ for all~$n\geq N'$. Therefore, for every monomial~$\mu\in \PP(n)$ such that if ~$\Ar(\mu)$ is large enough, then there exist~$\mu_1\in J_1(n_1)$ and~$\mu_2\in J_2(n_2)$ satisfying~$n_1,n_2\geq N'$ and~$\mu=\mu_1\circ_q\mu_2\in I_{\max}$ for some integer~$q\leq n_1$. This implies that~$\GKdim(\PP/I_{\max})=0$, which yields a contradiction. So~$I_{\max}$ is prime and the proof is completed.
\end{proof}

\begin{lemma}
\label{zzlem-new3-5.8}
Let $\PP$ be a finitely generated operad of GK-dimension $1$.
Let $\{I\}$ be the set of distinct prime ideals
such that $\GKdim (\PP/I )=1$. Then
$1\leq |\{I\}|\leq {\mathfrak m}(\PP)$.
\end{lemma}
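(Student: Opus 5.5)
The lower bound $1\leq |\{I\}|$ is immediate from Lemma \ref{zzlem-new3-5.7}(6): $\PP$ maps onto a prime operad of GK-dimension 1, so the kernel is a prime ideal $I$ with $\GKdim(\PP/I)=1$. For the upper bound I argue by contradiction. Suppose there are $n>{\mathfrak m}(\PP)$ distinct such primes $I_1,\dots,I_n$; note ${\mathfrak m}(\PP)$ is a finite positive integer by Lemma \ref{zzlem-new3-5.7}(4). I will produce an operad almost isomorphic to a suboperad of $\PP/(I_1\cap\cdots\cap I_n)$ that contains $\bigoplus_k \PP/I_k$ in large arities. Combining Lemma \ref{zzlem-new3-5.7}(2),(3),(5),(6) will then give ${\mathfrak m}(\PP)\geq n$, which is the contradiction.

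First, the primes are pairwise incomparable. If $I_j\subsetneq I_k$, then Lemma \ref{zzlem-new3-5.6} applied to the prime $I_j$ and the ideal $I_k$ shows that $\PP/I_k$ is finite dimensional, contradicting $\GKdim(\PP/I_k)=1$. The same lemma gives more: for $j\neq k$, the ideal $I_j+I_k$ strictly contains $I_j$, so $\PP/(I_j+I_k)$ is finite dimensional. Hence $(I_j+I_k)(m)=\PP(m)$ for all $m\gg0$.

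Next I show that the natural map $\PP\to\bigoplus_{k=1}^n \PP/I_k$ is surjective in large arities. This is a Chinese remainder argument carried out arity-wise. The idea is to induct on $n$, showing that $I_n+(I_1\cap\cdots\cap I_{n-1})$ contains $\PP_{\geq N}$ for some $N$.
\begin{enumerate}
\item[(a)] Each $I_n+I_k$ with $k<n$ contains $\PP_{\geq N_k}$.
\item[(b)] Hence the composite ideal $(I_n+I_1)\circ\cdots\circ(I_n+I_{n-1})$ contains all compositions of elements of arity $\geq N_k$.
\item[(c)] Any tree monomial of large arity in the generators can be factored as such an iterated composition of pieces of large arity. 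This uses finite generation and the fact that arities add up under composition.
\item[(d)] Expanding the product shows that it lies in $I_n+(I_1\circ\cdots\circ I_{n-1})$, and the latter is contained in $I_n+\bigcap_{k<n}I_k$.
\end{enumerate}
So $\PP(m)\to\bigoplus_k(\PP/I_k)(m)$ is onto for $m\gg0$. Then
\[\dim\PP(m)\ \geq\ \sum_{k=1}^n\dim(\PP/I_k)(m)\quad\text{for } m\gg 0.\]
Each $\PP/I_k$ is a finitely generated prime operad of GK-dimension 1, so ${\mathfrak m}(\PP/I_k)\geq1$, and in fact $\dim(\PP/I_k)(m)$ is eventually periodic by Corollary \ref{zzcor-new3-5.3}. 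Taking the limsup yields ${\mathfrak m}(\PP)\geq{\mathfrak m}(\bigoplus_k\PP/I_k)=\sum_k{\mathfrak m}(\PP/I_k)\geq n$, using Lemma \ref{zzlem-new3-5.7}(5).

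The main obstacle is step (c), the factorization. Ideals in an operad compose only through partial composition, so one must check that a monomial of large arity genuinely decomposes into pieces of large arity lying in the right ideals. Every monomial of large arity has either large weight along a path or large branching, and in either case it can be cut into a root piece and an upper piece, each of arity $\geq N'$. I would handle this as in the proof of Lemma \ref{zzlem-new3-5.7}(6), where exactly this cut was used.

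A second delicate point is the limsup in the final inequality. The sum of limsups could in principle exceed the limsup of the sum, which is why I use periodicity. Since each $\dim(\PP/I_k)(m)$ is eventually periodic with values $\geq1$ on an arithmetic progression, choosing arities divisible by the common period and lying in all the supports makes every summand large simultaneously. That such a common progression exists follows from the explicit forms $\Com^{\{w\}}_F$ and $\Ope^{\{2w\}}_F$, whose supports are $w\mathbb N+1$.
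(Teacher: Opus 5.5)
Your argument is correct, but it reaches the comaximality (``Chinese remainder'') step by a more laborious route than the paper.

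The paper never needs pairwise comaximality or any factorization of monomials. With $K=\bigcap_{j\geq 2}I_j$ it notes that $I_2\circ(I_3\circ\cdots\circ I_n)\subseteq K$. Primeness of $I_1$, together with Lemma \ref{zzlem-new3-5.6} to exclude $I_j\subsetneq I_1$, then gives $K\not\subseteq I_1$. One more application of Lemma \ref{zzlem-new3-5.6} to $I_1\subsetneq I_1+K$ yields $(I_1+K)_{\geq w}=\PP_{\geq w}$. Hence $\PP/(K\cap I_1)\to\PP/I_1\oplus\PP/K$ is an almost isomorphism, and induction finishes.

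Your own step (d) already contains this shortcut, so step (c) can be dropped. The product $I_1\circ\cdots\circ I_{n-1}$ lies in $\bigcap_{k<n}I_k$. So primeness of $I_n$, plus the incomparability you proved, gives $\bigcap_{k<n}I_k\not\subseteq I_n$. Lemma \ref{zzlem-new3-5.6} then gives comaximality in large arity directly.

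If you keep (c), note that ``large weight along a path or large branching'' is not the right mechanism. The correct argument runs as follows. Suppose all generators have arity $<M$ and $\Ar(T)\geq ML$. Walk up from the root, always entering the input of largest arity. The arity drops by a factor less than $M$ at each vertex, so some subtree $T_2$ has arity in $[L,ML)$. Cutting there gives $T=T_1\circ_q T_2$ with $\Ar(T_1)\geq\Ar(T)-ML+1$, and you iterate for the nested product.

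Your final count, via surjectivity of $\PP(m)\to\bigoplus_k(\PP/I_k)(m)$ for $m\gg 0$, is equivalent to the paper's almost isomorphism combined with Lemma \ref{zzlem-new3-5.7}(2),(3). The periodicity discussion just reproves Lemma \ref{zzlem-new3-5.7}(5), which you may cite directly. If you keep it, the arities should be taken $\equiv 1$ modulo the common period, not divisible by it.

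The paper's route buys brevity and avoids all tree-monomial combinatorics. Yours gives an explicit arity-wise surjectivity, at the cost of the factorization lemma.
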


\begin{proof}
By Lemma \ref{zzlem-new3-5.7}(4), ${\mathfrak m}(\PP)$ is a positive
integer.  By Lemma~\ref{zzlem-new3-5.7}(6), we have $1\leq |\{I\}|$. Let
$\{I_1,\cdots,I_n\}$ be an arbitrary finite subset of $\{I\}$. Let
$K=\bigcap_{j=2}^n I_j$. Since  $I_2\circ (I_3\circ\cdots(I_{n-1}\circ I_n)\cdots)
	\subseteq K$ and each $I_j$ is prime, $K$ is not a subideal of
$I_1$.  Otherwise, there exists some integer~$j$ satisfying~$2\leq j\leq n$ and~$I_j\subsetneq I_1$. By Lemma \ref{zzlem-new3-5.6},  we obtain that~$\GK(\PP/I_1)=0$, yielding a contradiction.   By Lemma \ref{zzlem-new3-5.6} again, $\PP/(K+I_1)$ is finite dimensional.
Pick $w$ such that $(K+I_1)_{\geq w}=\PP_{\geq w}$. Consider the
natural morphism
$$f: \PP/(K\cap I_1)\to \PP/I_1\oplus \PP/K$$
which is injective by construction. Since $(K+I_1)_{\geq w}=
\PP_{\geq w}$, $f$ is an almost isomorphism. Similarly, or by
induction on $n$, there is an almost isomorphism $g:\PP/K\to
\oplus_{i=2}^n \PP/I_i$. Put these together we have an almost
isomorphism
\begin{equation}
\label{E5.8.1}\tag{E5.8.1}
f': \PP/(\cap_{i=1}^n I_i)=
\PP/(K\cap I_1)\to \oplus_{i=1}^n \PP/I_i.
\end{equation}
Then we obtain

$$\begin{aligned}
	n&\leq {\mathfrak m}(\oplus_{i=1}^n \PP/I_i) \quad \; \quad \; {\text{ by Lemma \ref{zzlem-new3-5.7}(5)(4)}}\\
	&={\mathfrak m}(\PP/(\cap_{i=1}^n I_i)) \qquad {\text{ by Lemma \ref{zzlem-new3-5.7}(3)}}\\
	&\leq {\mathfrak m}(\PP) \qquad \qquad\qquad {\text{ by Lemma \ref{zzlem-new3-5.7}(2)}}.
\end{aligned}
$$
Since $n$ is an arbitrary integer no more than  $|\{I\}|$,
the assertion follows.
\end{proof}

Now we are ready to prove Theorem \ref{zzthm-new3-5.4}.
\begin{proof}[Proof of Theorem \ref{zzthm-new3-5.4}]  We first show that, there exist finitely many prime ideals of~$\PP$ such that their intersection is 0.
By Lemma \ref{zzlem-new3-5.8},  there exist  finitely  many prime ideals,
say $\{I_1,\cdots,I_n\}$, such that each $\PP/I_i$ is prime of
GK-dimension 1. Let~$I=\bigcap_{i=1}^n I_i$.

Assume that~$J$ is a prime ideal of $\PP$ such that $\PP/J$ is finite dimensional. And let~$\Omega$ be the set consisting of all such prime ideals. Since $\PP/J$ is prime and finite dimensional, we deduce that $(\PP/J)_{\geq 2}=\{0\}$, so
 each $J$ contains $\PP_{\geq 2}$. Since~$\PP$ is semiprime, we have
 $$\PP_{\geq2}\cap I \subseteq \bigcap_{J\in \Omega} J \cap \bigcap_{i=1}^n I_i=\{0\}. $$
 This implies that $ \PP_{\geq2}\circ I =\{0\}$.  Since~$\PP$ is saturated, we have~$I\subseteq \tau^l(\PP)= \{0\}$.

Now we obtain an injective
morphism
$$f: \PP\to \oplus_{i=1}^n \PP/I_i.$$
By \eqref{E5.8.1}, one sees that $f$ is an almost
isomorphism. Let $\QQ_i$ be the saturation of~$\PP/I_i$. By Corollary \ref{zzcor-new3-5.3}, each $\QQ_i$ is of the form $\Com^{\{w\}}_{F}$ or $\Ope^{\{2w\}}_{F}$. Then $\oplus_{i=1}^n \QQ_i$ is the
saturation of $\PP$ by Lemma~\ref{zzlem-new3-5.5} and Proposition \ref{zzpro-new-3.4}(2).
Since $\PP$ is saturated, we obtain that
$$\PP\cong \oplus_{i=1}^n \PP/I_i
\cong \oplus_{i=1}^n \QQ_i.$$
 The assertion
follows.
\end{proof}

\subsection{Classification of saturated linear operads.}
\label{zzsec5.3}
In this subsection we consider finitely generated
saturated operads with Hilbert series $\frac{t}{1-t}$ and
prove Theorem \ref{zzthm0.7}. Note that Theorem \ref{zzthm0.7}
fails if $\PP$ is not finitely generated, see Example
\ref{zzex6.3}.

Now we are ready to prove Theorem \ref{zzthm0.7}. 

\begin{proof}[Proof of Theorem \ref{zzthm0.7}] Suppose ${\text{char}}\; \Bbbk\neq 2$.
Let $\PP$ be a finitely generated saturated linear operad.
Clearly, $\PP$ is a left torsionfree operad of GK-dimension 1.   By Theorem~\ref{zzthm-new-2.4}(2) and by \cite[Lemma 3.13(2)]{LQXZ25}, $\PP$ is  $\AG\triv$. So
by Theorem~\ref{zzthm-new-4.5}(1), we have $\PP=G_{\AG\triv}(A)$
for some saturated PGC algebra~$A$.  Moreover, $A$ is linear. Therefore, $A$ is a finitely generated saturated PGC algebra with Hilbert series $\frac{1}{1-t}$.

Assume that~$A$ is generated by a finite set~$X\subseteq A_o\cup A_e$.  Then as in the proof of Lemma~\ref{zzlem-new-3.10}, there exists a regular homogeneous element~$x\in X$. Assume that~$x$ is of degree~$m\geq 2$. Then it follows that~$A_{\geq m}=xA$ since the Hilbert series of~$A$ is $\frac{1}{1-t}$. Moreover, since $A_o$ and $A_e$ are ideals, it follows that either~$A_o$ or $A_e$ contains $A_{\geq m}$. 
Consequently, the other 
is finite dimensional. Therefore, by the construction of $G_{\AG\triv}(A)$, either $\PP_{\triv}$ or $\PP_{\sign}$ is finite dimensional. By Lemma \ref{zzlem-new-3.2}(3), either
$\PP_{\sign} \subseteq \tau^l(\PP)=\{0\}$ or $\PP_{\triv} \subseteq \tau^l(\PP)=\{0\}$. So~$\PP$ is $\SG\triv$ or $\SG\sign$, and thus~$A$ is of even type or odd type.

 Finally,  by Lemma~\ref{zzlem-new-3.10}, $A$ is
isomorphic to $B\{c\}$ given in Example \ref{zzex-new-3.6} (resp. Example \ref{zzrema-new-3.7}), where~$B$ is nilpotent. 
  Since~ $\PP=G_{\AG\triv}(A)$,  it follows that $\PP$  is of the form $\Lin_e(B)$ or $\Lin_{o}(B)$. 
\end{proof}

We conclude this section with an easy observation on~$G_{\AG\triv}(A)$ when $A=B\{c\}$ for some specific $B$. If~$B=\Bbbk$, $\deg(c)=1$ and $A$ is a PGC algebra of even type, then~$G_{\AG\triv}(A)=\Com$. If~$B=\Bbbk\oplus \Bbbk x_1$, $x_1^2=0$, $\deg(c)=2$ and $A$ is a PGC algebra of odd type, then~$G_{\AG\triv}(A)=\Mas$.
\section{Comments, remarks and examples}
\label{zzsec6}

When $\overline{\Bbbk}$ is infinite dimensional over
$\Bbbk$, there are infinitely generated prime operads
of GK-dimension 1, see the next example.

\begin{example}
\label{zzex6.1}
By calculus, the function
$$f(x):=(x+1)\ln (x+1)- x\ln(x),\quad x\geq 1$$
is positive and increasing to infinity when $x\to \infty$.
For each positive integer $m$, let~$\lambda(m)$ be the smallest
positive integer such that
$f(\lambda(m))\geq m$.

Suppose that $\overline{\Bbbk}$ is infinite dimensional over
$\Bbbk$. Then there is a strictly increasing chain of finite
field extensions $F_s/\Bbbk$ (with $\dim  F_1>1$) such
that $\lim_{s\to \infty} \dim  F_s=\infty$.
We now construct a connected graded algebra
$A=\oplus_{i=0}^{\infty} A_i$ that is a
$\Bbbk$-sub-algebra of $\overline{\Bbbk}[t]$ defined by
$$A_i=\begin{cases}
\Bbbk t^i & \quad {\text{if}} \;\; 0\leq i <\lambda(\dim  F_1),\\
 F_{s} t^i & \quad
{\text{if}} \;\; \lambda(\dim  F_s) \leq i <\lambda(\dim  F_{s+1}).
\end{cases}$$
By definition, for all~$i$ such that~$\lambda(\dim  F_s) \leq i <\lambda(\dim  F_{s+1})$, we have
$$\dim  A_i=\dim  F_s
\leq f(\lambda(\dim  F_s))\leq f(i).$$
Therefore, there is a constant $a$ such that for all $n\geq 1$,
$$\begin{aligned}
\sum_{i=0}^{n} \dim  A_i &\leq\lambda(\dim  F_1)+
\sum_{i=\lambda(\dim  F_1)}^n f(i) \\
&\leq a+ \sum_{i=1}^{n} f(i)=a+(n+1)\ln(n+1),
\end{aligned}
$$
which implies that $\GKdim (A)=1$. It is clear that
$A$ is not finitely generated over~$\Bbbk$.
Let $\PP=G_{\SG\triv}(A)$. Then $\PP$ is a prime operad
of GK-dimension 1 that is not finitely generated. It is
clear that $\dim \PP(n)$ is not uniformly bounded. Since
$A$ is commutative, it follows that every element
in $\PP$ is central.
\end{example}

Next we construct a prime operad of GK-dimension 1
that is not almost $\AG\triv$.

\begin{example}
\label{zzex6.2}
We start with constructing an infinitely generated connected
graded algebra $A$ generated by $\{x_s\}_{s=1}^{\infty}$ of degrees
$d_s:=\deg x_s\geq 1$ for all $s\geq 1$. The sequence
$\{d_s\}_{s\geq 1}$ will be strictly increasing and chosen
carefully as in \eqref{E6.2.2} below. Let
$A=\Bbbk\langle x_s\mid s\geq 1\rangle/R$, where the relation
ideal $R$ is generated by the elements of the form
\begin{equation}
\label{E6.2.1}\tag{E6.2.1}
x_s x_{i_1}\cdots x_{i_w} x_s
\end{equation}
such that $w\geq 0$ and $i_{j}<s$ for all $1\leq j\leq w$. For
example, $x_1^2, x_2^2, x_2 x_1 x_2, x_2x_1^2x_2$ are the first
few elements in the relation ideal $R$. It is easy to check that
$A$ is prime and infinitely generated and the center of $A$
is trivial. Let $J_i$ be the ideal of $A$ generated by $\{x_1,\cdots,x_i\}$.
Then $A/J_i$ is another prime and infinitely generated connected
graded algebra with trivial center.

Let ${\mathcal S}_A$ be the symmetric operad provided by
\cite[Construction 8.1]{QXZZ20}. Then~${\mathcal S}_A$ is a
prime and infinitely generated connected operad with no
central element. Now we want to estimate the GK-dimension of
${\mathcal S}_A$.

By \cite[Construction 8.1]{QXZZ20}, $\dim {\mathcal S}_A(i)
=i \dim A_{i-1}$. Since there are infinitely many $i$
with nonzero $\dim A_{i-1}$, we have that

\begin{enumerate}
\item[(1)]
$\GKdim ({\mathcal S}_A)\geq 1$.
\end{enumerate}

Next we will choose the sequence $\{d_s:=\deg x_s\}_{s\geq 1}$
such that $\GKdim ({\mathcal S}_A)=1$.
Fix any $d_1\geq 1$ (which could be $1$), we define $d_{s+1}$
based on $\{d_1,\cdots,d_s\}$. Let~$A^{\langle s\rangle}$ be the
subalgebra of $A$ generated by $\{x_1,\cdots,x_s\}$. By the
relations of the form~\eqref{E6.2.1}, $A^{\langle s\rangle}$ is
finite dimensional. Let $\alpha_s$ be  $\dim A^{\langle s \rangle}$
and $\beta_s$ be the highest degree of a nonzero element in
$A^{\langle s\rangle}$. We define
\begin{equation}
\label{E6.2.2}\tag{E6.2.2}
d_{s+1}:=1+\lfloor e^{2 \alpha_s^2(1+2\beta_s)^2}\rfloor.
\end{equation}
We will verify that $\GKdim ({\mathcal S}_A)=1$. By definition,
$A_i=0$ when~$\beta_s<i < d_{s+1}$. For~$d_{s+1}\leq i\leq \beta_{s+1}$, we have the following estimate
\begin{equation}
\label{E6.2.3}\tag{E6.2.3}
\dim A_i\leq \alpha_s^2,
\end{equation}
which follows from the fact that every element in $A_i$
with $d_{s+1}\leq i\leq \beta_{s+1}$ is a linear combination
of elements of the form $fx_{s+1} g$ with~$f,g\in A^{\langle s\rangle}$. It is clear that
\begin{equation}
\label{E6.2.4}\tag{E6.2.4}
\beta_{s+1}=d_{s+1}+2 \beta_s.
\end{equation}
We now show that, there is a positive integer $C>1$ such
that
\begin{equation}
\label{E6.2.5}\tag{E6.2.5}
\sum_{i=1}^n \dim {\mathcal S}_A(i)\leq C n\ln n
\end{equation}
for all $n\geq d_1+1$.

For any $n\geq d_1+1$, there are two possibilities: one is
$d_{s}+1\leq n \leq \beta_{s}+1$ for some~$s$ and the
other is $\beta_{s}+1<n<d_{s+1}+1$ for some $s$. We consider
these two cases:

Case 1: $d_{s}+1\leq n \leq \beta_{s}+1$. We prove
\eqref{E6.2.5} by induction on $s$. When $s=1$, this
follows by taking some large positive number $C$.   Assume
that \eqref{E6.2.5} holds when $d_{s}+1\leq n \leq \beta_{s}+1$.
Now consider $n$ with $d_{s+1}+1\leq n \leq \beta_{s+1}+1$, we obtain
$$\begin{aligned}
\sum_{i=1}^n \dim {\mathcal S}_A(i)
&=\sum_{i=1}^{\beta_{s}+1}
\dim {\mathcal S}_A(i)
+\sum_{i=d_{s+1}+1}^{n}
\dim {\mathcal S}_A(i) \\
&\leq C (\beta_{s}+1) \ln (\beta_{s}+1)
+\sum_{i=d_{s+1}+1}^{\beta_{s+1}+1}
\dim {\mathcal S}_A(i) \quad\quad {\text{by induction hypo.}} \\
&\leq C (\beta_{s}+1) \ln (\beta_{s}+1)
+\sum_{i=d_{s+1}+1}^{d_{s+1}+2\beta_s+1}
\dim {\mathcal S}_A(i) \quad\qquad\qquad {\text{by \eqref{E6.2.4}}} \\
&\leq C (\beta_{s}+1) \ln (\beta_{s}+1)
+\sum_{i=d_{s+1}+1}^{d_{s+1}+2\beta_s+1}
(d_{s+1}+2\beta_s+1) \alpha_s^2 \quad {\text{by \eqref{E6.2.3}}}\\
&\leq C (\beta_{s}+1) \ln (\beta_{s}+1)
+(1+2\beta_s) (d_{s+1}+2\beta_s+1) \alpha_s^2 \\
&\leq C (\beta_{s}+1) \ln (\beta_{s}+1)
+(1+2\beta_s)^2 d_{s+1} \alpha_s^2 \\
&\leq C (\beta_{s}+1) \ln (\beta_{s}+1)
+\frac{1}{2} d_{s+1} \ln d_{s+1}  \quad\qquad\qquad\qquad\quad {\text{by \eqref{E6.2.2}}} \\
&\leq  \frac{1}{2} C d_{s+1}\ln d_{s+1}
+\frac{1}{2} d_{s+1} \ln d_{s+1}
\leq C d_{s+1}\ln d_{s+1}\\
&\leq C n \ln n.
\end{aligned}
$$

Case 2: 
$\beta_{s}+1<n<d_{s+1}+1$ for some $s$. 
Since $A_j=0$
for $\beta_s<j<d_{s+1}$, we have ${\mathcal S}_A(i)=0$  for $\beta_s+1<i<d_{s+1}+1$. 
 By induction hypothesis and by \eqref{E6.2.5}, we have 
$$\begin{aligned}
\sum_{i=1}^n \dim {\mathcal S}_A(i)
&=\sum_{i=1}^{\beta_{s}+1}
\dim {\mathcal S}_A(i)
\leq C (\beta_s+1) \ln (\beta_s+1)
\leq C n \ln n
\end{aligned}
$$
as desired.  Note that
\eqref{E6.2.5} implies that~$\GKdim ({\mathcal S}_A)\leq 1$.
Combining with part~{(1)}, we have

\begin{enumerate}
\item[(2)]
When we choose $d_{s+1}$ as in \eqref{E6.2.2},
$\GKdim ({\mathcal S}_A)=1$.
\end{enumerate}

The following assertions  can be easily proved by verifying
the corresponding assertions for the algebra $A$ and thus the
details are omitted.

\begin{enumerate}
\item[(3)]
There is an ascending chain of ideals $\{I_i\}_{i\geq 1}$
of ${\mathcal S}_A$ such that ${\mathcal S}_A/I_i$ is again
a prime operad of GK-dimension 1. This follows from the fact that
${\mathcal S}_A/I_i\cong {\mathcal S}_{A/J_i}$ where
$J_i$ is the ideal of $A$ described at the beginning
of this example.
\end{enumerate}

\begin{enumerate}
\item[(4)]
Elements of arity $\geq 2$ in ${\mathcal S}_A$ is
not central.
\item[(5)]
${\mathcal S}_A$ is a union of finite dimensional
suboperads.
\item[(6)]
Every finitely generated suboperad of ${\mathcal S}_A$
is finite dimensional.
\end{enumerate}

For the rest of this example, we suppose that $\Bbbk$ is not
algebraically closed. Let~$F$ be a non-trivial finite field
extension of $\Bbbk$. Let $\QQ$ be the ``base-change'' operad of 
${\mathcal S}_{A}$ defined to be ${\mathcal S}_{A}\otimes F$.  In general, let $\PP$ be an operad over $\Bbbk$ and $R$ be a commutative
$\Bbbk$-algebra. We define a ``base-change'' operad $\PP_R$
over $\Bbbk$ as follows:
\begin{enumerate}
\item[(i)]
$\PP_R(n):=\PP(n)\otimes R$ for all $n\geq 0$, where $\otimes$
means $\otimes_{\Bbbk}$,
\item[(ii)]
for $x\in \PP(n)$ and $r\in R$, $(x\otimes r)\ast \sigma:=
(x\ast \sigma) \otimes r$ for all $\sigma\in \SG_n$,
\item[(iii)]
for $x\in \PP(n)$, $y\in\PP(m)$, $r\in R$, and $s\in R$,
$(x\otimes r)\circ_i (y\otimes s):=(x\circ_i y)\otimes (rs)$
for all $1\leq i \leq n$,
\item[(iv)]
the identity of $\PP_R$ is $1_{\PP}\otimes 1_{R}$.
\end{enumerate}
It is easy to see that $\PP_R$ is an operad over $\Bbbk$.

Note that $\QQ$ is also isomorphic to ${\mathcal S}_{A\otimes F}$, where $(A\otimes F)_{n}$ is defined to be $A_n\otimes F$. 

\begin{enumerate}
\item[(7)]
Similar to ${\mathcal S}_A$, $\QQ$ is prime. Let
$f\in F \setminus \Bbbk$. Then $f$ is a central element of arity 1.
Every nonzero element $x\in \QQ$ of arity $\geq 2$ is not central.
\end{enumerate}
\end{example}

The next example is an infinitely generated linear operad.

\begin{example}
\label{zzex6.3}
Let $A$ be the connected commutative graded algebra generated by
$\{x_0,x_1,\cdots\}$ with $\deg x_i=2^i$ for all $i\geq 0$ and
subject to the relations
$$x_0^2=x_1^2=x_2^2=\cdots =x_i^2 = \cdots =0$$
for all $i$. It is easy to check that $A$ is torsionfree with Hilbert series~$\frac{1}{1-t}$. Moreover, we have the following cancellation property: for all~$f,f'\in A$ of degree less than $2^{l}$, the equation $x_lf=x_lf'$ forces~$f=f'$. 
 Let $\PP=G_{\SG\triv}(A)$ be as constructed in
\cite[Lemma 3.7]{LQXZ25}.
Then~$\PP$ is a left (and right) torsionfree operad
with Hilbert series $\frac{t}{1-t}$ and~$\PP$
 is infinitely generated and of GK-dimension~1. Let $I_i$ be the ideal of
$\PP$ generated by~$\{x_0,\cdots,x_i\}$. Then it is easy to check that
$\PP/I_i$ is also left (and right) torsionfree and of
GK-dimension~1.   Now we show that~$\PP$ and $\PP/I_i$ $(i
\geq 0)$ are saturated. 
Define~$A^{(i)}:=A/J_i$, where~$J_i$ is the ideal of~$A$ generated by $\{x_0,\cdots,x_i\}$.  By Theorem~\ref{zzthm-new-4.2}, it suffices to show that~$A$ and $A^{(i)}$ $(i\geq 0)$ are 
saturated.
Let~$C$ be $A$ or $A^{(i)}$.   
Then the elements $\{\alpha_t:= x_{i+t}\}_{t\geq 1}\subseteq C $ satisfy  
Condition ($\ast$) in Lemma \ref{saturated-condi}, and thus the assertion
follows immediately.
\end{example}

\begin{definition}
\label{zzdef-new-6.4}
Let $s$ be a positive integer.
An operad $\PP$ is called {\it $s$-linear} if
\begin{enumerate}
\item[(i)]
$\PP$ is saturated,
\item[(ii)]
$H_{\PP}(t)=\frac{t}{(1-t)^s}$.
\end{enumerate}
\end{definition}

Note that Theorem \ref{zzthm0.7} gives a classification
of $1$-linear operads.

\begin{example}
\label{xxex6.6}
Here are some examples of $2$-linear operads.
\begin{enumerate}
\item[(1)]
Let $A$ be a connected  saturated commutative graded algebra
with Hilbert series $\frac{1}{(1-t)^2}$.
Then $G_{\SG\triv}(A)$ is $2$-linear.
\item[(2)]
Let $A$ be a connected saturated graded commutative algebra (considered
as a PGC algebra of odd type). Suppose the Hilbert series of
$A$ is $\frac{1}{(1-t)^2}$. Then $G_{\AG\triv}(A)$ is $2$-linear.
One special case is $\Mas^2_2$ given in \cite[Example 6.2]{LQXZ25}.
\item[(3)]
Let $\Bbbk[t]$ be the polynomial ring with $\deg t=1$. Let
$\PP:={\mathcal S}_{\Bbbk[t]}$ be the operad constructed in
\cite[Construction 8.1]{QXZZ20}. Then $\PP$ is $2$-linear.
\end{enumerate}
\end{example}

The next question is a sub-question of Question~\ref{zzque0.8}(1).

\begin{question}
\label{zzque-new-6.7}
How can we classify all finitely generated $2$-linear operads?
\end{question}

\section*{Acknowledgments}
The  authors thank Yanhua Wang for her valuable suggestions. Y. Li and X.-G. Zhao would like to thank C.-H. Li and the department of
Mathematics in Southern University of Science and Technology for the hospitality during their visit. Y. Li was
partially supported by the National Science Foundation of
China (No. 11501237).
Z.-H. Qi was partially supported by the National Science
Foundation of China (No. 11771085). Y.-J. Xu was partially
supported by Shandong Provincial Natural Science Foundation (No. ZR2024MA052) and National Science Foundation of China
(No. 11871301). J.J. Zhang was partially
supported by the US National Science Foundation (Nos. DMS-2001015 and DMS-2302087). Z.-R. Zhang was supported by the
Guangdong Basic and Applied Basic Research Foundation (No. 2024A1515013122).
X.-G. Zhao was partially supported by
the Guangdong Basic and Applied Basic Research Foundation (No. 2023A1515011690) and the Characteristic Innovation Project of Guangdong Provincial Department of Education (Nos. 2023KTSCX145, 2025KTSCX145).

\section*{Conflict of Interest}
The authors declare that they have no conflict of interest.

\bibliographystyle{amsalpha}

\end{document}